\newcommand{\transv}{\mathrel{\text{\tpitchfork}}}
\newcommand{\tpitchfork}{%
  \vbox{
    \baselineskip\z@skip
    \lineskip-.52ex
    \lineskiplimit\maxdimen
    \m@th
    \ialign{##\crcr\hidewidth\smash{$-$}\hidewidth\crcr$\pitchfork$\crcr}
  }%
}
\definecolor{refkey}{rgb}{1,0,0}
\definecolor{labelkey}{rgb}{1,0,0}
  \mathchardef\ordinarycolon\mathcode`\:
\theoremstyle{plain}
\newtheorem{thm}{Theorem}[section]
\newtheorem{lem}[thm]{Lemma}
\newtheorem{prop}[thm]{Proposition}
\newtheorem{cor}[thm]{Corollary}
\newtheorem{defi}[thm]{Definition}
\begin{document}

\title{{\LARGE{Sturm 3-ball global attractors 1:\\ Thom-Smale complexes and meanders}}}

\author{
 \\
\emph{-- Dedicated to 	Waldyr M. Oliva, mentor and friend  --}\\
{~}\\
Bernold Fiedler* and Carlos Rocha**\\
\vspace{2cm}}

\date{version of \today}
\maketitle
\thispagestyle{empty}

\vfill

*\\
Institut für Mathematik\\
Freie Universität Berlin\\
Arnimallee 3\\ 
14195 Berlin, Germany\\
\\
**\\
Center for Mathematical Analysis, Geometry and Dynamical Systems\\
Instituto Superior T\'ecnico\\
Universidade de Lisboa\\
Avenida Rovisco Pais\\
1049--001 Lisbon, Portugal\\


\newpage
\pagestyle{plain}
\pagenumbering{roman}
\setcounter{page}{1}

\begin{abstract}
This is the first of three papers on the geometric and combinatorial characterization of global Sturm attractors which consist of a single closed 3-ball.
The underlying scalar PDE is parabolic,
$$ u_t = u_{xx} + f(x,u,u_x)\,, $$
on the unit interval $0 < x<1$ with Neumann boundary conditions.
Equilibria are assumed to be hyperbolic.\\
\newline
Geometrically, we study the resulting Thom-Smale dynamic complex with cells defined by the unstable manifolds of the equilibria.
The Thom-Smale complex turns out to be a regular cell complex.
Our geometric description involves a bipolar orientation of the 1-skeleton, a hemisphere decomposition of the boundary 2-sphere by two polar meridians, and a meridian overlap of certain 2-cell faces in opposite hemispheres.\\
\newline
The combinatorial description is in terms of the Sturm permutation, alias the meander properties of the shooting curve for the equilibrium ODE boundary value problem.
It involves the relative positioning of extreme 2-dimensionally unstable equilibria at the Neumann boundaries $x=0$ and $x=1$, respectively, and the overlapping reach of polar serpents in the shooting meander.\\
\newline
In the present paper we show the implications
$$ \text{Sturm attractor}\quad \Longrightarrow \quad \text{Thom-Smale complex} \quad \Longrightarrow \quad \text{meander}\,.$$
The sequel, part 2, closes the cycle of equivalences by the implication
$$ \text{meander} \quad \Longrightarrow \quad \text{Sturm attractor}\,.$$
Each implication, or mapping, involves certain constructions which are tuned such that the final 3-ball Sturm global attractor defined by the meander combinatorics coincides with the originally given Sturm 3-ball.
Many explicit examples and illustrations will be discussed in part 3.
The present 3-ball trilogy extends our previous trilogy on planar Sturm global attractors towards the still elusive goal of geometric and combinational characterizations of all Sturm global attractors of arbitrary dimension.

\end{abstract}

\newpage

\tableofcontents


\newpage
\pagenumbering{arabic}
\setcounter{page}{1}

\section{Introduction}
\label{sec1}

\numberwithin{equation}{section}
\numberwithin{figure}{section}

For our general introduction we first follow \cite{firo14}.
\emph{Sturm global attractors} $\mathcal{A}_f$ are the global attractors of scalar parabolic equations
	\begin{equation}
	u_t = u_{xx} + f(x,u,u_x)
	\label{eq:1.1}
	\end{equation}
on the unit interval $0<x<1$.
Just to be specific we consider Neumann boundary conditions $u_x=0$ at $x=0,1$.
Standard semigroup theory provides local solutions $u(t,x)$ for $t \geq 0$ and given initial data at time $t=0$, in suitable solution spaces $u(t, \cdot) \in X \subseteq C^1 ([0,1], \mathbb{R})$.
Under suitable dissipativeness assumptions on $f \in C^2$, any solution eventually enters a fixed large ball in $X$.
In fact that large ball of initial conditions itself limits onto the maximal compact and invariant subset $\mathcal{A}_f$ which is called the global attractor.
In general, the global attractor consists of all \emph{eternal solutions}, i.e. of all solutions $u(t, \cdot )$ which exist globally and remain uniformly bounded for all real times $t \in \mathbb{R}$, both in the positive and in the negative (backwards) time direction.
Since \eqref{eq:1.1} possesses a Lyapunov~function, alias a variational structure, the global attractor consists of equilibria and of solutions $u(t, \cdot )$, $t \in \mathbb{R}$, with forward and backward limits, i.e.
	\begin{equation}
	\underset{t \rightarrow \pm \infty}{\mathrm{lim}} u(t, \cdot ) = v_\pm 	\,.
	\label{eq:1.2}
	\end{equation}
In other words, the $\alpha$- and $\omega$-limit sets of $u(t,\cdot )$ are two distinct equilibria $v_\pm$.
We call $u(t, \cdot )$ a \emph{heteroclinic} or \emph{connecting} orbit and write $v_- \leadsto v_+$ for such heteroclinically connected equilibria.
Equilibria $v = v(x)$ are time-independent solutions, of course, and hence satisfy the ODE
	\begin{equation}
	0 = v_{xx} + f(x,v,v_x)\,,
	\label{eq:1.3}
	\end{equation} 
for $0\leq x \leq 1$, again with Neumann~boundary.
See \cite{he81, pa83, ta79} for a general background, \cite{ma78, mana97, ze68, hu11, fietal14} for the gradient-like Lyapunov~structure of \eqref{eq:1.1} under separated boundary conditions, and \cite{bavi92, chvi02, edetal94, ha88, haetal02, la91, ra02, seyo02, te88} for global attractors in general.

Here and below we assume that all equilibria $v$ of \eqref{eq:1.1}, \eqref{eq:1.3} are \emph{hyperbolic}, i.e. without eigenvalues (of) zero (real part) of their linearization.
Let $\mathcal{E} = \mathcal{E}_f$ denote the set of equilibria.
Our generic hyperbolicity assumption and dissipativeness of $f$ imply that $\mathcal{A}_f$ is contractible. In particular $N$:= $|\mathcal{E}_f|$ is odd.

We attach the name of \emph{Sturm} to the PDE \eqref{eq:1.1}, and to its global attractor $\mathcal{A}_f$, due to a crucial nodal property of its solutions which we express by the \emph{zero number} $z$.
Let $0 \leq z (\varphi) \leq \infty$ count the number of strict sign changes of $\varphi : [0,1] \rightarrow \mathbb{R}, \, \varphi \not\equiv 0$.
Then
	\begin{equation}
	t \quad \longmapsto \quad z(u^1(t, \cdot ) - u^2(t, \cdot ))\,
	\label{eq:1.4}
	\end{equation}
is finite and nonincreasing with time $t$, for $t>0$ and any two distinct solutions $u^1$, $u^2$ of \eqref{eq:1.1}.
Moreover $z$ drops strictly with increasing $t$, at any multiple zero of $x \longmapsto u^1(t_0 ,x) - u^2(t_0 ,x)$; see \cite{an88}.
See Sturm \cite{st1836} for a linear autonomous version.
The case $z=0$ is known as strong monotonicity or parabolic comparison principle for scalar parabolic equations, and holds in any space dimension.
The full Sturm~structure \eqref{eq:1.4}, however, restricts applicability to one space dimension, a few types of delay equations, and certain tridiagonal Jacobi type ODE systems.
The dynamic consequences of the Sturm~structure, however, are enormous.
For a first introduction see also \cite{ma82, brfi88, fuol88, mp88, brfi89, ro91, fisc03, ga04} and the many references there.

As a convenient notational variant of the zero number $z$, we also write
	\begin{equation}
	z(\varphi) = j_{\pm}
	\label{eq:1.4+}
	\end{equation}
to indicate $j$ strict sign changes of $\varphi$, by $j$, and $\pm \varphi (0) >0$, by the index $\pm$.
For example $z(\pm \varphi_j) = j_{\pm}$, for the $j$-th Sturm-Liouville eigenfunction $\varphi_j$.

In a series of papers, we have given a combinatorial description of Sturm global attractors $\mathcal{A}_f$; see \cite{firo96, firo99, firo00}.
Define the two \emph{boundary orders} $h^f_0, h^f_1$: $\lbrace 1, \ldots, N \rbrace \rightarrow \mathcal{E}_f$ of the equilibria such that
	\begin{equation}
	h^f_\iota (1) < h^f_\iota (2) < \ldots < h^f_\iota (N) \qquad \text{at}
	\qquad x=\iota = 0,1\,.
	\label{eq:1.5}
	\end{equation}
In other words, $(h^f_\iota)^{-1}(v)$ is the ranking of the equilibria $v$, by increasing boundary values at $x=\iota$.
See figs.~\ref{fig:3.1} and \ref{fig:6.5} for specific examples, where $h_0^f(7)=24$ and $h_1^f(7)=23$.

The combinatorial description is based on the \emph{Sturm~permutation} $\sigma_f \in S_N$ which was introduced by Fusco and Rocha in \cite{furo91} and is defined as
	\begin{equation}
	\sigma_f:= (h^f_0)^{-1} \circ h^f_1\,.
	\label{eq:1.6}
	\end{equation}
Using a shooting approach to the ODE boundary value problem \eqref{eq:1.3}, the Sturm~permutations $\sigma_f$ have been characterized as \emph{dissipative Morse meanders} in \cite{firo99}; see also \eqref{eq:1.19a}--\eqref{eq:1.24} below.
In \cite{firo96} we have shown how to determine which equilibria $v_\pm$ possess a heteroclinic orbit connection \eqref{eq:1.2}, explicitly and purely combinatorially from $\sigma_f$.

More geometrically, global Sturm attractors $\mathcal{A}_f$ and $\mathcal{A}_g$ with the same Sturm permutation $\sigma_f = \sigma_g$ are $C^0$ orbit-equivalent \cite{firo00}.
For $C^1$-small perturbations, from $f$ to $g$, this global rigidity result is based on $C^0$ structural stability of Morse-Smale systems; see e.g. \cite{pasm70, pame82, ol83}.
For large perturbations, substantially new arguments were required, and provided, in \cite{firo00}.
A remaining puzzle are different, and even nonconjugate, Sturm permutations which give rise to $C^0$ orbit-equivalent Sturm attractors; see also fig.~\ref{fig:5.2} below.
We will address this puzzle in our sequel \cite{firo3d-2}.

It is the Sturm property of \eqref{eq:1.4} which implies the Morse-Smale property, for hyperbolic equilibria.
In fact, stable and unstable manifolds $W^u(v_-)$, $W^s(v_+)$, which intersect precisely along heteroclinic orbits $v_- \leadsto v_+$, are automatically transverse:
$W^u(v_-) \transv W^s(v_+)$.
See \cite{he85, an86}.
In the Morse-Smale setting, Henry already observed, that a heteroclinic orbit $v_- \leadsto v_+$ is equivalent to $v_+$ belonging to the boundary $\partial W^u(v_-)$ of the unstable manifold $W^u(v_-)$; see \cite{he85}.

In most of our previous papers, heteroclinic orbits were described by the \emph{connection~graph} $\mathcal{H}_f$ with vertices given by the set $\mathcal{E}_f$ of equilibria, all hyperbolic.
Let $i(v)=\mathrm{dim} \,W^u (v)$ denote the \emph{Morse~index} of $v$, i.e. the dimension of the unstable manifold $W^u$ of $v$.
Then the edges of the directed connection graph $\mathcal{H}_f$ are given by the unique heteroclinic orbits $u:\, v_- \leadsto v_+$ between equilibria of adjacent Morse indices $i(v_+) = i(v_-)-1$.
In other words, an edge between such vertices $v_\pm$ exists if, and only if, $v_\pm$ possess a heteroclinic orbit connecting them.
The "connects to" relation $\leadsto$ is transitive and satisfies a cascading principle; see \cite{brfi89, firo96}.
Therefore it is sufficient to know the connection graph $\mathcal{H}_f$ in order to conclude for any pair $v_\pm$ of equilibria whether or not they possess a heteroclinic connecting orbit.
Indeed $ v_- \leadsto v_+$ if and only if there exists a directed path from $v_-$ to $v_+$ in $\mathcal{H}_f$.

For planar Sturm~attractors $\mathcal{A}_f$, i.e. for equilibrium sets $\mathcal{E}_f$ with a maximal Morse index two \cite{br90, jo89, ro91}, a more geometric approach had been initiated in the planar Sturm trilogy \cite{firo08, firo09, firo10}.
It was clarified which planar graphs $\mathcal{H}$ do arise as connection graphs $\mathcal{H}=\mathcal{H}_f$ of planar Sturm attractors $\mathcal{A}_f$, and which ones do not.
Meanwhile, a \emph{Schoenflies~theorem} has also been proved to hold for the closure $\mathrm{clos\,}{W}^u(v) \subseteq X$ of the unstable manifold $W^u$ of any hyperbolic equilibrium $v$; see \cite{firo13}.
In particular $\mathrm{clos\,}{W}^u(v)$ is the homeomorphic Euclidean embedding of a closed unit ball $\bar{B}^{i(v)}$ of dimension $i(v)$.
In \cite{firo14} this allowed us to reformulate the combinatorial results of \cite{firo08, firo09, firo10}, in a more geometric and topological language, as follows.

Consider \emph{finite~regular} CW-\emph{complexes}
	\begin{equation}
	\mathcal{C} = \bigcup\limits_{v\in \mathcal{E}} c_v\,,
	\label{eq:1.7}
	\end{equation}
i.e. finite disjoint unions of \emph{cell~interiors} $c_v$ with additional gluing properties. We think of the labels $v\in \mathcal{E}$ as \emph{barycenter} elements of $c_v$. For CW-complexes we require the closures $\mathrm{clos\,}{c}_v$ in $\mathcal{C}$ to be the continuous images of closed unit balls $\bar{B}_v$ under \emph{characteristic maps} $\bar{B}_v \rightarrow \mathrm{clos\,} c_v$.
We call $\mathrm{dim}\,\bar{B}_v$ the dimension of the (open) cell $c_v$. 
For positive dimensions of $\bar{B}_v$ we require $c_v$ to be the homeomorphic images of the interiors $B_v$. 
For dimension zero we write $B_v := \bar{B}_v$ so that any 0-cell $c_v= B_v$ is just a point.
The \emph{m-skeleton} $\mathcal{C}^m$ of $\mathcal{C}$ consists of all cells of dimension at most $m$.
We require $\partial c_v := \mathrm{clos\,}{c}_v \setminus c_v \subseteq \mathcal{C}^{m-1}$ for any $m$-cell $c_v$.
Thus, the boundary $(m-1)$-sphere $S_v := \partial B_v = \bar{B}_v \setminus B_v$ of any $m$-ball  $B_v$, $m>0$, maps into the $(m-1)$-skeleton,
	\begin{equation}
	\partial B_v \quad \longrightarrow \quad \partial c_v \subseteq \mathcal{C}^{m-1}\,,
	\label{eq:1.8}
	\end{equation}
for the $m$-cell $c_v$, by restriction of the continuous characteristic map.
The map \eqref{eq:1.8} is called the \emph{attaching} (or \emph{gluing}) \emph{map}.
For \emph{regular} CW-complexes, in contrast, the characteristic maps $ \bar{B}_v  \rightarrow  \mathrm{clos\,}{c}_v $ are required to be homeomorphisms, up to and including the \emph{attaching} (or \emph{gluing}) \emph{homeomorphism}. We moreover require $\partial{c_v}$  to be a sub-complex of $\mathcal{C}^{m-1}$, then. 
See \cite{frpi90} for a background on this terminology.

In variational or gradient-like settings with hyperbolicity of equilibria it is tempting to expect the disjoint dynamic decomposition
	\begin{equation}
	\mathcal{A}_f = \bigcup\limits_{v \in \mathcal{E}_f} W^u(v)
	\label{eq:1.9}
	\end{equation}
of the global attractor $\mathcal{A}_f$, into the unstable manifolds $W^ u$ of its equilibria $v$, to be a finite regular CW-complex. If this expectation holds true, then \eqref{eq:1.9} is called the \emph{Thom-Smale complex} or \emph{dynamic complex} of the global attractor $\mathcal{A}_f$. See \cite{fr79, bo88, bizh92} for further background.
Unfortunately, there are some theoretical obstacles, and manifest counterexamples, to this grand expectation in general variational settings. See for example \cite{bahu04}.

In our Sturm setting \eqref{eq:1.1} with hyperbolic equilibria $v_1, \ldots, v_N$, however, automatic transversality of stable and unstable manifolds comes to our assistance. The zero number moreover implies that \eqref{eq:1.9} is a \emph{regular} dynamic complex, i.e. the dynamic decomposition \eqref{eq:1.9} of $\mathcal{A}_f$ is a finite \emph{regular} CW-complex with (open) cells $c_v$ given by the unstable manifolds $W^ u (v)$ of the equilibria $v$. 
The proof is closely related to the Schoenflies result of \cite{firo13}; see \cite{firo14}.
We can therefore define the \emph{Sturm~complex} $\mathcal{C}_f$ to be the regular dynamic complex, alias the Thom-Smale complex, $\mathcal{C}_f = \bigcup_{v \in \mathcal{E}_f}\, W^u(v)$ of the Sturm global attractor $\mathcal{A}_f$, provided all equilibria $v \in \mathcal{E}_f$ are hyperbolic. 
Again we call the equilibrium $v$ the \emph{barycenter} of the cell $c_v=W^u(v)$.
A planar Sturm complex $\mathcal{C}_f$, for example, is the Thom-Smale regular complex of a planar $\mathcal{A}_f$, i.e. of a Sturm global attractor for which all equilibria $v \in \mathcal{E}_f$ have Morse indices $i(v) \leq 2$.
See section~\ref{sec2} for a detailed discussion.

Actually, the Schoenflies result \cite{firo13} provides a finer structure than a mere regular CW-complex. It actually provides a disjoint hemisphere decomposition
	\begin{equation}
	\partial W^u(v) = \bigcup\limits_{0 \leq j < i(v)}^{\bullet}
	\Sigma _{\pm}^j(v)
	\label{eq:1.9a}
	\end{equation}
of the topological boundary $\partial W^u$:= $ \mathrm{clos\,} W^u(v) \smallsetminus W^u(v)$ of the unstable manifold $W^u(v) = c_v$, for any hyperbolic equilibrium $v$.
The construction of the disjoint hemispheres $\Sigma_{\pm}^j = \Sigma_{\pm}^j(v)$ can be summarized as follows.
For $0 \leq j \leq i(v)$, let $W^j$ denote the $j$-dimensional fast unstable manifold of $v$.
The tangent space to $W^j$ at $v$ is spanned by the eigenfunctions $\varphi_0, \ldots , \varphi_{j-1}$ of the linearization of \eqref{eq:1.3} at $v$, for the first $j$ eigenvalues $\lambda_0 > \ldots > \lambda_{j-1} >0$.
Of course, $W^0 := \{v\}$.
Consider any orbit $u(t, \cdot) \in W^{j+1} \smallsetminus W^j$, $t \in \mathbb{R}$.
Then
	\begin{equation}
	\lim_{t \rightarrow -\infty} \left( u \left( t,\cdot\right) -v 				\right)\, 	/\, |u\left(t, \cdot\right) -v| = \pm \varphi_j\,; 
	\label{eq:1.9b}
	\end{equation}		
by normalization of $\varphi_j$ in the appropriate norm of the phase space $X \hookrightarrow C^1$.
Here and below we fix signs such that $\varphi_j(0) >0$.
In particular, the signed zero number $z$ of \eqref{eq:1.4} satisfies
	\begin{equation}
	\lim_{t \rightarrow\, - \infty} z\left( u \left( t, \cdot\right) -v 			\right) = j_\pm\,.
	\label{eq:1.9c}
	\end{equation}
See \cite{brfi86} for further details on the construction of $W^j$.

The \emph{signed hemispheres} $\Sigma_\pm^j$ are defined, recursively, by the disjoint unions
	\begin{equation}
	\Sigma^j:= \partial W^{j+1} = \Sigma_-^j \, \dot{\cup} \, \Sigma_+^j 			\, \dot{\cup}\,  \Sigma^{j-1}\,,
	\label{eq:1.9d}
	\end{equation}
for $0 \leq j < i(v)$, with the convention $\Sigma ^{-1}$:= $ \emptyset$.
The hemisphere closures,
	\begin{equation}
	\mathrm{clos\,} \Sigma_\pm^j = \Sigma_\pm^j \, \dot{\cup}\, 						\Sigma^{j-1}\,,
	\label{eq:1.9e}
	\end{equation}
can be obtained as $\omega$-limit sets of protocap hemispheres which are $C^1$-small, nearly parallel perturbations of $\mathrm{clos\,} W^j$ in $\mathrm{clos\,} W^{j+1}$, in the eigendirections $\pm \varphi_j$, respectively.
In particular \eqref{eq:1.9b}, \eqref{eq:1.9c} hold in the interior of the protocaps, and for any heteroclinic orbit $v \leadsto \Tilde{v} \in \Sigma_\pm^j$.
In proposition~3.1(iv) we will characterize equilibria $\Tilde{v} \in \Sigma_\pm^j$ by their signed zero number as
	\begin{equation}
	z(\Tilde{v}-v) = j_\pm\,.
	\label{eq:1.9f}
	\end{equation}
Loosely speaking, we call the above dynamically defined complex $\Sigma_\pm^j(v)$ of signed hemispheres a signed Thom-Smale complex.	
	
The $m$-dimensional \emph{Chafee-Infante global attractor} $\mathcal{A}_{\text{CI}}^m$ is an illustrative example.
It arises from PDE \eqref{eq:1.1} for cubic nonlinearities $f(u) = \lambda u(1-u^2)$.
Consider $v$ = $\mathcal{O}$:= $0$ and observe $i(v) = m \geq 1$ for $(m-1)^2 < \lambda / \pi^2 < m^2$.
The $2m$ remaining equilibria $v_\pm^j$ are characterized by $z(v_\pm^j - \mathcal{O}) = j_\pm$, all hyperbolic.
The Thom-Smale complex \eqref{eq:1.9} of $\mathcal{A}_{\text{CI}}^m = \mathrm{clos\,} W^u(\mathcal{O})$ consists of the single $m$-cell $W^u (\mathcal{O})$ and the $m$-cell boundary $\partial W^u(\mathcal{O})$ given by \eqref{eq:1.9a}.
The hemisphere decomposition is simply the remaining dynamic decomposition
	\begin{equation}
	\Sigma_\pm^j = W^u (v_\pm^j)\,,
	\label{eq:1.9gg}
	\end{equation}
$0 \leq j < m = i(v)$, in the Chafee-Infante case.
See also \cite{chin74, he81, he85}.
The Chafee-Infante attractor $\mathcal{A}_{\text{CI}}^m$ is the $m$-dimensional Sturm attractor with the smallest possible number $N = 2m+1$ of equilibria.
Equivalently, among all Sturm attractors with $N = 2m+1$ equilibria, it possesses the largest possible dimension.
Interestingly the dynamics on each closed hemisphere $\mathrm{clos\,} \Sigma_\pm^j$ is itself $C^0$ orbit equivalent to the Chafee-Infante dynamics on $\mathcal{A}_{\text{CI}}^j$.

Our main objective, in the present trilogy of papers, is a geometric and combinatorial characterization of those global Sturm attractors, which are the closure
	\begin{equation}
	\mathcal{A}_f = \mathrm{clos\,} W^u (\mathcal{O})
	\label{eq:1.10}
	\end{equation}
of the unstable manifold $W^u$ of a single equilibrium $v = \mathcal{O}$ with Morse index $i(\mathcal{O}) =3$.
We call such an $\mathcal{A}_f$ a 3-\emph{ball Sturm attractor}.
Recall that we assume all equilibria $v_1, \ldots, v_N$ to be hyperbolic:
\emph{sinks} have Morse index $i=0$, \emph{saddles} have $i=1$, and \emph{sources}  $i=2$.
This terminology also applies when viewed within the flow-invariant and attracting boundary 2-sphere
	\begin{equation}
	\Sigma^2 = \partial W^u(\mathcal{O}):= \left(
	\mathrm{clos\,} W^u(\mathcal{O})\right) \smallsetminus
	W^u (\mathcal{O})\,.
	\label{eq:1.11}
	\end{equation}
Correspondingly we call the associated cells $c_v = W^u(v)$ of the Thom-Smale complex, or of any regular cell complex, \emph{vertices}, \emph{edges}, and {\emph{faces}.
The graph of vertices and edges, for example, defines the 1-skeleton $\mathcal{C}^1$ of the 3-ball cell complex $\mathcal{C} = \bigcup_v \, c_v$.

For 3-ball Sturm attractors, the signed hemisphere decomposition \eqref{eq:1.9a} reads
	\begin{equation}
	\Sigma^2 = \partial W^u (\mathcal{O})=
	\bigcup\limits_{j=0,1,2}^\bullet \Sigma_\pm^j\,.
	\label{eq:1.13}
	\end{equation}
Here $\Sigma_\pm^0 = \lbrace \mathbf{N}, \mathbf{S}\rbrace$ is the boundary of the one-dimensional fastest unstable manifold $W^1 = W^1(\mathcal{O})$, tangent to the positive eigenfunction $\varphi_0$ at $\mathcal{O}$.
Solutions $t \mapsto u(t,x)$ in $W^1$ are monotone in $t$, for any fixed $x$.
Accordingly
	\begin{equation}
	z(\mathbf{N}- \mathcal{O}) = 0_-\,, 
	\quad z(\mathbf{S}- \mathcal{O}) = 0_+\,,
	\label{eq:1.14}
	\end{equation}
i.e. $\mathbf{N} < \mathcal{O} < \mathbf{S}$.
The \emph{poles} $\mathbf{N},\mathbf{S}$ split the boundary circle $\Sigma^1 = \partial W^2 (\mathcal{O})$ of the 2-dimensional fast unstable manifold into the two \emph{meridian} half-circles $\Sigma_\pm^1$.
The circle $\Sigma^1$, in turn splits the boundary sphere $\Sigma^2 = \partial W^u(\mathcal{O})$ of the whole unstable manifold $W^u(\mathcal{O})$ into the two hemispheres $\Sigma_\pm^2$.
We recall the characterizing zero number property \eqref{eq:1.9f} for equilibria on the hemispheres $\Sigma_\pm^j (\mathcal{O})$, $j=0,1,2$.
This describes the signed hemisphere decomposition on the $\Sigma^2$ boundary in the 3-cell $c_{\mathcal{O}} = W^u(\mathcal{O})$ of the dynamic Thom-Smale complex \eqref{eq:1.9}, for any 3-ball Sturm attractor.

Any circular face boundary $\Sigma^1 = \Sigma^1(v) = \partial c_v$, $i(v)=2$, likewise possesses a decomposition
	\begin{equation}
	\Sigma^1 (v) = \bigcup\limits_{j=0,1}^\bullet \Sigma _\pm^j(v)\,.
	\label{eq:1.15}
	\end{equation}
The boundary circle $\partial c_v = \partial W^u(v)$ is split into two half circles by the two \emph{local poles} $\Sigma_\pm^0(v)$ which are located strictly below and above $v$.
Any 1-cell edge $c_v=W^u(v)$, $i(v)=1$ of a saddle $v$, finally, possesses two boundary equilibria,
	\begin{equation}
	\Sigma^0 (v) = \Sigma_-^0(v)\, \dot{\cup} \, \Sigma_+^0(v)\,.
	\label{eq:1.16}
	\end{equation}
The dynamics on $c_v$ is strictly monotone, with $z=0$ there.
The signed hemisphere decompositions describe a refined, signed version of the dynamic Thom-Smale complex of $\mathcal{A}_f = \mathrm{clos\,} W^u(\mathcal{O})$.

Only in the Chafee-Infante case is each hemisphere $\Sigma_\pm^j(\mathcal{O})$ given by the unstable manifold of a single equilibrium.
We formalize the general structure as follows.

\begin{defi}\label{def:1.1}
Let $\mathcal{A} = \mathcal{A}_f$ be a Sturm global attractor with equilibrium set $\mathcal{E}$, all hyperbolic.
For any $v \in \mathcal{E}$, $0 \leq j < i(v)$, let
	\begin{equation}
	\mathcal{E}_\pm^j(v) := \mathcal{E} \cap \Sigma_\pm^j(v)
	\label{eq:1.9g}
	\end{equation}
denote the equilibria in the hemispheres $\Sigma_\pm^j(v)$.
The sets $\mathcal{E}_\pm^j(v)$, for fixed $v$, partition the target set of equilibria $\Tilde{v}$ which $v$ connects to heteroclinically, $v \leadsto \Tilde{v}$.
We call these partitions, including their labels $v$, $j$, and $\pm$, the \emph{signed hemisphere template} of $\mathcal{A}$.

In the special case of a 3-ball Sturm attractor $\mathcal{A}$ we call the partitions $\mathcal{E}_\pm^j(v)$ the \emph{signed 2-hemisphere template}.
\end{defi}

The above signed hemisphere template structure is entirely discrete.
Indeed, the characterization of the hemisphere equilibrium sets $\mathcal{E}_\pm^j(v)$ in proposition~3.1 will assert
	\begin{equation} 
	\Tilde{v} \in \mathcal{E}_\pm^j(v) \quad \Longleftrightarrow \quad
	( v \leadsto \Tilde{v} \quad \text{and} \quad
	z(\Tilde{v}-v) = j_\pm ) \,.
	\label{eq:1.9h}
	\end{equation}
Since heteroclinicity $v \leadsto \Tilde{v}$ can be decided based on signed zero numbers, as well, the signed hemisphere structure can be viewed as contained in, but possibly coarser than, the \emph{signed zero matrix} of all signed zero numbers $z(\Tilde{v}-v)$, for $(\Tilde{v},v) \in \mathcal{E} \times \mathcal{E}$, including the Morse entries $i(v)$ on the diagonal $(v,v) \in \mathcal{E} \times \mathcal{E}$.

For the geometric characterization of 3-ball Sturm attractors $\mathcal{A}_f$ in \eqref{eq:1.10}, by their dynamic Thom-Smale complexes \eqref{eq:1.9}, we now drop all Sturmian PDE interpretations.
Instead we define 3-cell templates, abstractly, in the class of regular cell complexes and without any reference to PDE or dynamics terminology.
See fig.~\ref{fig:1.1} for an illustration.
In theorem~\ref{thm:4.1} below, we will then claim that the dynamic Thom-Smale complex $c_v = W^u(v)$ of any 3-ball Sturm attractor $\mathcal{A}_f$ indeed provides a 3-cell template.

\begin{figure}[t!]
\centering \includegraphics[width=\textwidth]{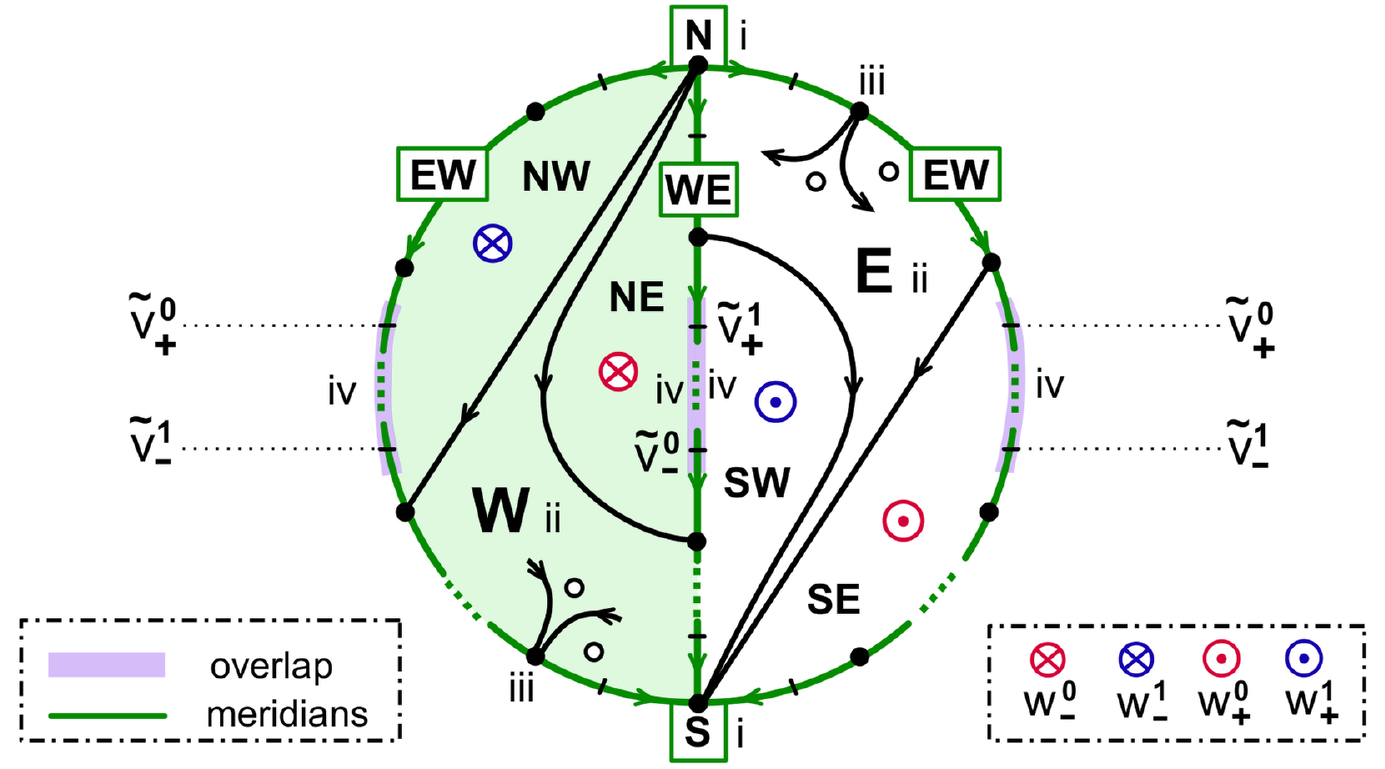}
\caption{\emph{
A 3-cell template. Shown is the $S^2$ boundary of the single 3-cell $c_\mathcal{O}$ with poles $\mathbf{N}$, $\mathbf{S}$, hemispheres $\mathbf{W}$ (green), $\mathbf{E}$ and separating meridians $\mathbf{EW}$, $\mathbf{WE}$ (green).
The right and the left boundaries denote the same $\mathbf{EW}$ meridian and have to be identified.
Dots $\bullet$ are sinks, and small circles $\circ$ are sources.
Note the hemisphere decomposition~(ii), the edge orientations~(iii) at meridian boundaries, and the meridian overlaps~(iv) of the $\mathbf{N}$-adjacent meridian faces $\otimes = w_-^\iota$ with their $\mathbf{S}$-adjacent counterparts $\odot =w_+^\iota$.
For $w_\pm^\iota, \Tilde{v}_\pm^\iota$ see also \eqref{eq:1.24a}, corollary~\ref{cor:4.4}, and fig.~\ref{fig:4.2}.
For specific examples see figs.~\ref{fig:5.2}, \ref{fig:6.1}, \ref{fig:6.3}.
}}
\label{fig:1.1}
\end{figure}

\begin{defi}\label{def:1.2}
A finite disjoint union $\mathcal{C} = \bigcup_{v \in \mathcal{E}} c_v$ of cells $c_v$ is called a \emph{3-cell template} if $\mathcal{C}$ is a regular cell complex and the following four conditions all hold.
\begin{itemize}
\item[(i)] $\mathcal{C} = \mathrm{clos\,} c_{\mathcal{O}}= S^2 \,\dot{\cup}\, c_{\mathcal{O}}$ is the closure of a single 3-cell $c_{\mathcal{O}}$.
\item[(ii)] The 1-skeleton $\mathcal{C}^1$ of $\mathcal{C}$ possesses a \emph{bipolar orientation} from a pole vertex $\mathbf{N}$ (North) to a pole vertex $\mathbf{S}$ (South), with two disjoint directed \emph{meridian paths} $\mathbf{WE}$ and $\mathbf{EW}$ from $\mathbf{N}$ to $\mathbf{S}$.
The circle of meridians decomposes the boundary sphere $S^2$ into remaining \emph{hemisphere} components $\mathbf{W}$ (West) and $\mathbf{E}$ (East).
\item[(iii)] Edges are oriented towards the meridians, in $\mathbf{W}$, and away from the meridians, in $\mathbf{E}$, at end points on the meridians other than the poles $\mathbf{N}$, $\mathbf{S}$.
\item[(iv)] Let $\mathbf{NE}$, $\mathbf{SW}$ denote the unique faces in $\mathbf{W}$, $\mathbf{E}$, respectively, which contain the first, last edge of the meridian $\mathbf{WE}$ in their boundary.
Then the boundaries of $\mathbf{NE}$ and $\mathbf{SW}$ overlap in at least one shared edge of the meridian $\mathbf{WE}$.

Similarly, let $\mathbf{NW}$, $\mathbf{SE}$ denote the unique faces in $\mathbf{W}$, $\mathbf{E}$, adjacent to the first, last edge of the other meridian $\mathbf{EW}$, respectively.
Then their boundaries overlap in at least one shared edge of $\mathbf{EW}$.
\end{itemize}
\end{defi}

We recall here that an edge orientation of the 1-skeleton $\mathcal{C}^1$ is called bipolar if it does not contain directed cycles, and possesses a single ``source'' vertex $\mathbf{N}$ and a single ``sink'' vertex $\mathbf{S}$, both on the boundary of $\mathcal{C}$. 
Here ``source'' and ``sink'' are understood, not dynamically but, with respect to edge orientation.
To avoid any confusion with dynamic $i=0$ sinks and $i=2$ sources, below, we call $\mathbf{N}$ and $\mathbf{S}$ the North and South pole, respectively. See \cite{fretal95} for a survey on a closely related notion of bipolarity.

With definitions~\ref{def:1.1} and \ref{def:1.2} at hand, we can now formulate the passage from 3-ball Sturm attractors $\mathcal{A}$ to 3-cell templates $\mathcal{C}$ as the passage
	\begin{equation}
	\text{signed 2-hemisphere template} \quad \Longrightarrow \quad
	\text{3-cell template}\,.
	\label{eq:1.12}
	\end{equation} 
The hemisphere translation table between $\mathcal{A}$ and $\mathcal{C}$ will be the following:

\begin{equation}
\begin{aligned}
(\Sigma_-^0, \Sigma_+^0) \quad &\mapsto \quad (\mathbf{N}, \mathbf{S})\\
(\Sigma_-^1, \Sigma_+^1) \quad &\mapsto \quad (\mathbf{EW}, \mathbf{WE})\\
(\Sigma_-^2, \Sigma_+^2) \quad &\mapsto \quad (\mathbf{W}, \mathbf{E})
\end{aligned}
\label{eq:1.17}
\end{equation}

Here $\Sigma_\pm^j$ abbreviates $\Sigma_\pm^j(\mathcal{O})$. Theorem~\ref{thm:4.1} below asserts that the finite regular dynamic Thom-Smale complex $c_v= W^u(v)$ of $\mathcal{A}$, with the above translation of the hemisphere decomposition of $\partial W^u(\mathcal{O})$, indeed satisfies conditions (i)--(iv) of definition~\ref{def:1.2} on a 3-cell template.
We already note here that the 3-cell condition~(i) on $c_{\mathcal{O}} = W^u(\mathcal{O})$ is obviously satisfied.
The bipolar orientation~(ii) of the edges $c_v$ of the 1-skeleton, alias the one-dimensional unstable manifolds $c_v = W^u(v)$ of $i(v)=1$ saddles $v$, is simply the strict monotone order from vertex $\Sigma_-^0 (v)$ to vertex $\Sigma_+^0(v)$, uniformly for $0 \leq x \leq 1$.

In \cite{firo14} we have already shown how any 3-cell regular complex, i.e. any regular cell complex satisfying definition~\ref{def:1.1}(i), does appear as the dynamic Thom-Smale complex of \emph{some} 3-ball Sturm attractor with these prescribed cells as unstable manifolds.
The complete characterization of 3-ball Sturm attractors by the remaining, more specific, orientation and decomposition conditions (ii)--(iv) was not discussed there.

The second implication which we address in the present paper is the passage
	\begin{equation}
	\text{3-cell template} \quad \Longrightarrow \quad
	\text{3-meander template}\,.
	\label{eq:1.18}
	\end{equation} 
As for 3-cell templates, we temporarily ignore all Sturm attractor connotations and define 3-meander templates, abstractly, without any reference to ODE shooting.

Abstractly, a \emph{meander} is an oriented planar $C^1$ Jordan curve $\mathcal{M}$ which crosses a positively oriented horizontal axis at finitely many points.
The curve $\mathcal{M}$ is assumed to run from Southwest to Northeast, asymptotically, and all $N$ crossings are assumed to be transverse; see \cite{ar88, arvi89}.
Note $N$ is odd.
Enumerating the $N$ crossing points $v \in \mathcal{E}$, by $h_0$ along the meander $\mathcal{M}$ and by $h_1$ along the horizontal axis, respectively, we obtain two labeling bijections
	\begin{equation}
	h_0,h_1: \quad \lbrace 1, \ldots , N \rbrace \rightarrow \mathcal{E}\,.
	\label{eq:1.19a}
	\end{equation}
Define the \emph{meander permutation} $\sigma \in S_N$ as 
	\begin{equation}
	\sigma := h_0^{-1} \circ h_1.
	\label{eq:1.19b}
	\end{equation}
We call the meander $\mathcal{M}$ \emph{dissipative} if
	\begin{equation}
	\sigma(1) =1, \quad \sigma(N) =N
	\label{eq:1.20}
	\end{equation}
are fixed under $\sigma$.

For $\mathcal{M}$-adjacent crossings $v=h_0(m)$, $\Tilde{v}= h_0(m+1)$ we define \emph{Morse numbers} $i_{\Tilde{v}}$, $i_v$, such that
	\begin{equation}
	i_{\Tilde{v}} \,=\, i_v +(-1)^{m+1} \,
	\text{sign} (h_1^{-1} (\Tilde{v}) -h_1^{-1} (v))\,.
	\label{eq:1.22a}
	\end{equation}
Recursively, this defines all Morse numbers $i_v$ of the meander $\mathcal{M}$ uniquely, with any one of the two equivalent normalizations
	\begin{equation}
	i_{h_0(1)}= 0\,, \quad i_{h_0(N)}=0\,.
	\label{eq:1.23}
	\end{equation}
See \eqref{eq:1.26} for adjacent crossings on the $h_1$-axis. We call the meander $\mathcal{M}$ \emph{Morse}, if
	\begin{equation}
	i_v \geq 0\,,
	\label{eq:1.24}
	\end{equation}
for all $v \in \mathcal{M}$.

We call $\mathcal{M}$ \emph{Sturm meander}, if $\mathcal{M}$ is a dissipative Morse meander; see \cite{firo96}. 
Conversely, given any permutation $\sigma \in S_N$, we label $N$ crossings along the axis in the order of $\sigma$. Define an associated curve $\mathcal{M}$ of arches over the horizontal axis which switches sides at the labels $\{1,\ldots,N\}$, successively. This fixes $h_0=\mathrm{id}$ and $h_1=\sigma$.
A \emph{Sturm permutation} $\sigma$ is a permutation such that the associated curve $\mathcal{M}$ is a Sturm meander.
The main paradigm of \cite{firo99} is the equivalence of Sturm meanders $\mathcal{M}$ with shooting curves $\mathcal{M}_f$ of the Neumann ODE problem \eqref{eq:1.3}.
In fact, the Neumann shooting curve is a Sturm meander, for any dissipative nonlinearity $f$ with hyperbolic equilibria.
Conversely, for any permutation $\sigma$ of a Sturm meander $\mathcal{M}$ there exist dissipative $f$ with hyperbolic equilibria such that $\sigma = \sigma_f$ is the Sturm permutation of $f$.
In that case, the intersections $v$ of the meander $\mathcal{M}_f$ with the horizontal $v$-axis are the boundary values of the equilibria $v \in \mathcal{E}_f$ at $x=1$, and the Morse number
	\begin{equation}
	i_v = i(v)
	\label{eq:1.22b}
	\end{equation}
is the Morse index of $v$.
For that reason we have used closely related notation to describe either case.

In particular, \eqref{eq:1.22a} extends the terminology of \emph{sinks} $i_v =0$, \emph{saddles} $i_v =1$, and \emph{sources} $i_v=2$ to abstract Sturm meanders.
We insist, however, that our above definition~\eqref{eq:1.19a}--\eqref{eq:1.24} is completely abstract and independent of this ODE/PDE interpretation.

We return to abstract Sturm meanders $\mathcal{M}$ as in \eqref{eq:1.19a}--\eqref{eq:1.24}.
For example, consider the case $i_{\mathcal{O}} =3$ of a single intersection $v= \mathcal{O}$ with Morse number $3$.
Suppose $i_v \leq 2$ for all other Morse numbers. Then \eqref{eq:1.22a} implies $i=2$ for the two $h_0$-\emph{neighbors} $h_0(h_0^{-1} (\mathcal{O})\pm1)$ of $\mathcal{O}$ along the meander $\mathcal{M}$.
In other words, these neighbors are both sources.
The same statement holds true for the two $h_1$-\emph{neighbors} $h_1(h_1^{-1}(\mathcal{O})\pm 1)$ of $\mathcal{O}$ along the horizontal axis.
To fix notation, we denote these $h_\iota$-neighbors by
	\begin{equation}
	w_\pm^\iota:= h_\iota (h_\iota^{-1} (\mathcal{O}) \pm 1)\,,
	\label{eq:1.24a}
	\end{equation}
for $\iota = 0,1$.
The $h_\iota$-\emph{extreme sources} are the first and last source intersections $v$ of the meander $\mathcal{M}$ with the horizontal axis, in the order of $h_\iota$.

\begin{figure}[t!]
\centering \includegraphics[width=\textwidth]{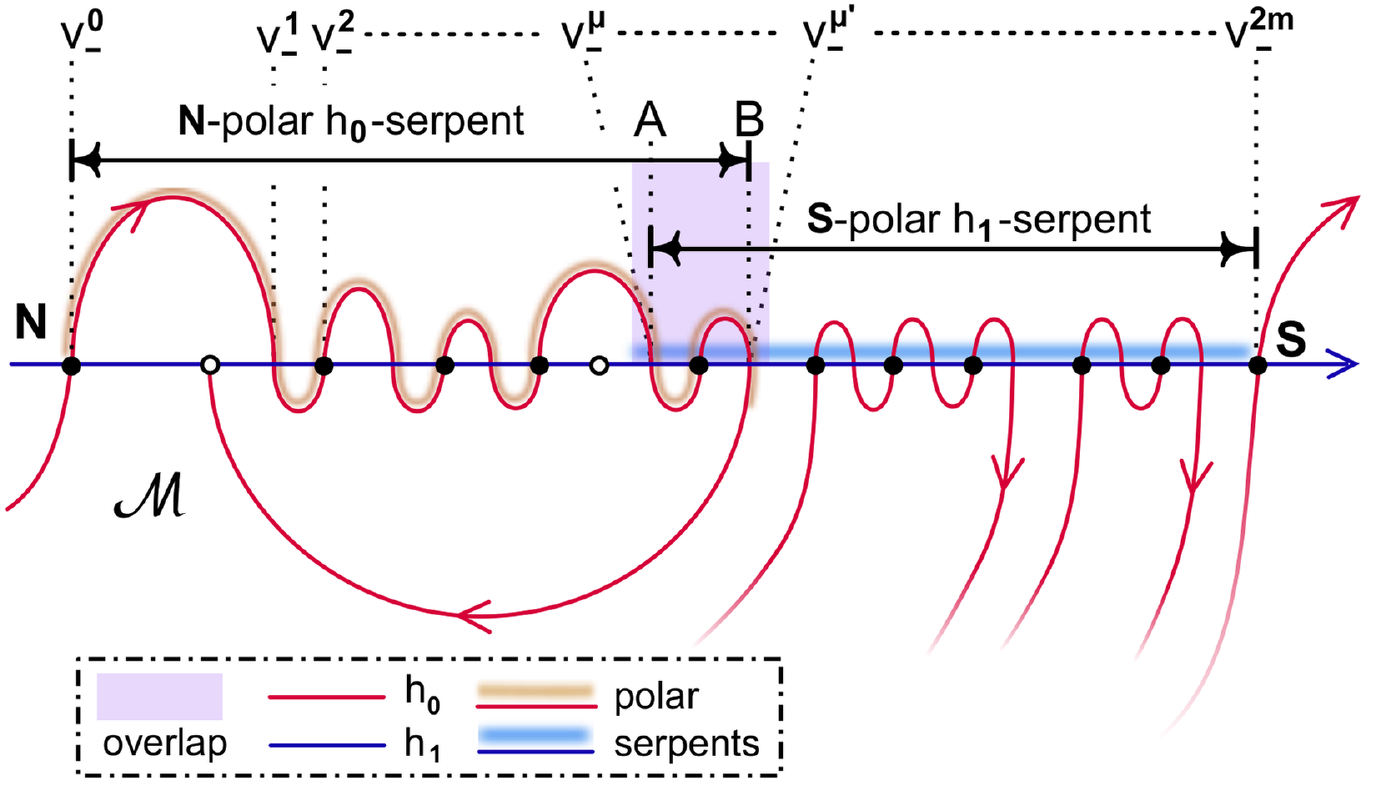}
\caption{\emph{
An $\mathbf{N}$-polar $h_0$-serpent with last axis intersection at the saddle $B=v_-^{\mu'}$, and an anti-polar, i.e. $\mathbf{S}$-polar, $h_1$-serpent with first meander intersection at the saddle $A=v_-^{\mu}$.
Solid dots $\bullet$ indicate sinks, $i=0$, and small circles $\circ$ denote sources, $i=2$.
Saddle crossings are not marked.
Note how $B$ is succeeded by an $i=2$ source, along the meander arc of $h_0$, by maximality of the polar $h_0$-serpent.
Similarly, $A$ is preceded by an $i=2$ source along the horizontal $h_1$-axis.
All polar $h_0$-serpents are oriented left to right.
The polar $h_0$-serpent overlaps its anti-polar $h_1$-serpent from $A$ to $B$.
See the definition in \eqref{eq:1.25}.
}}
\label{fig:1.2}
\end{figure}

Reminiscent of cell template terminology, we call the extreme sinks $\mathbf{N} = h_0(1) = h_1(1)$ and $\mathbf{S} = h_0(N) = h_1(N)$ the (North and South) \emph{poles} of the Sturm meander $\mathcal{M}$.
A \emph{polar} $h_\iota$-\emph{serpent}, for $\iota =0,1$, is a set of $v =h_\iota (m) \in \mathcal{E}$ for a maximal interval of integers $m$ which contains a pole, $\mathbf{N}$ or $\mathbf{S}$, and satisfies
	\begin{equation}
	i_{h_\iota(m)} \in \lbrace 0,1\rbrace
	\label{eq:1.25}
	\end{equation}
for all $m$.
To visualize the serpent we often include the meander or axis path joining the elements $v$ of the serpent.
To determine $h_1$-serpents, the following variant of \eqref{eq:1.22a} for $h_1$-neighbors $v = h_1(m)$, $\Tilde{v} = h_1(m+1)$ is useful:
	\begin{equation}
	i_{\Tilde{v}} = i_v +(-1)^{m+1} \,
	\text{sign} (h_0^{-1} (\Tilde{v}) -h_0^{-1} (v))\,.
	\label{eq:1.26}
	\end{equation}
See figs.~\ref{fig:1.2} and \ref{fig:1.3} for examples.
We call $\mathbf{N}$-polar serpents and $\mathbf{S}$-polar serpents anti-polar to each other.
An \emph{overlap} of anti-polar serpents simply indicates a nonempty intersection.
For later reference, we call a polar $h_\iota$-serpent \emph{full} if it extends all the way to the saddle which is $h_{1-\iota}$-adjacent to the opposite pole.
Full $h_\iota$-serpents always overlap with their anti-polar $h_{1-\iota}$-serpent, of course, at least at that saddle.

\begin{figure}[t!]
\centering \includegraphics[width=\textwidth]{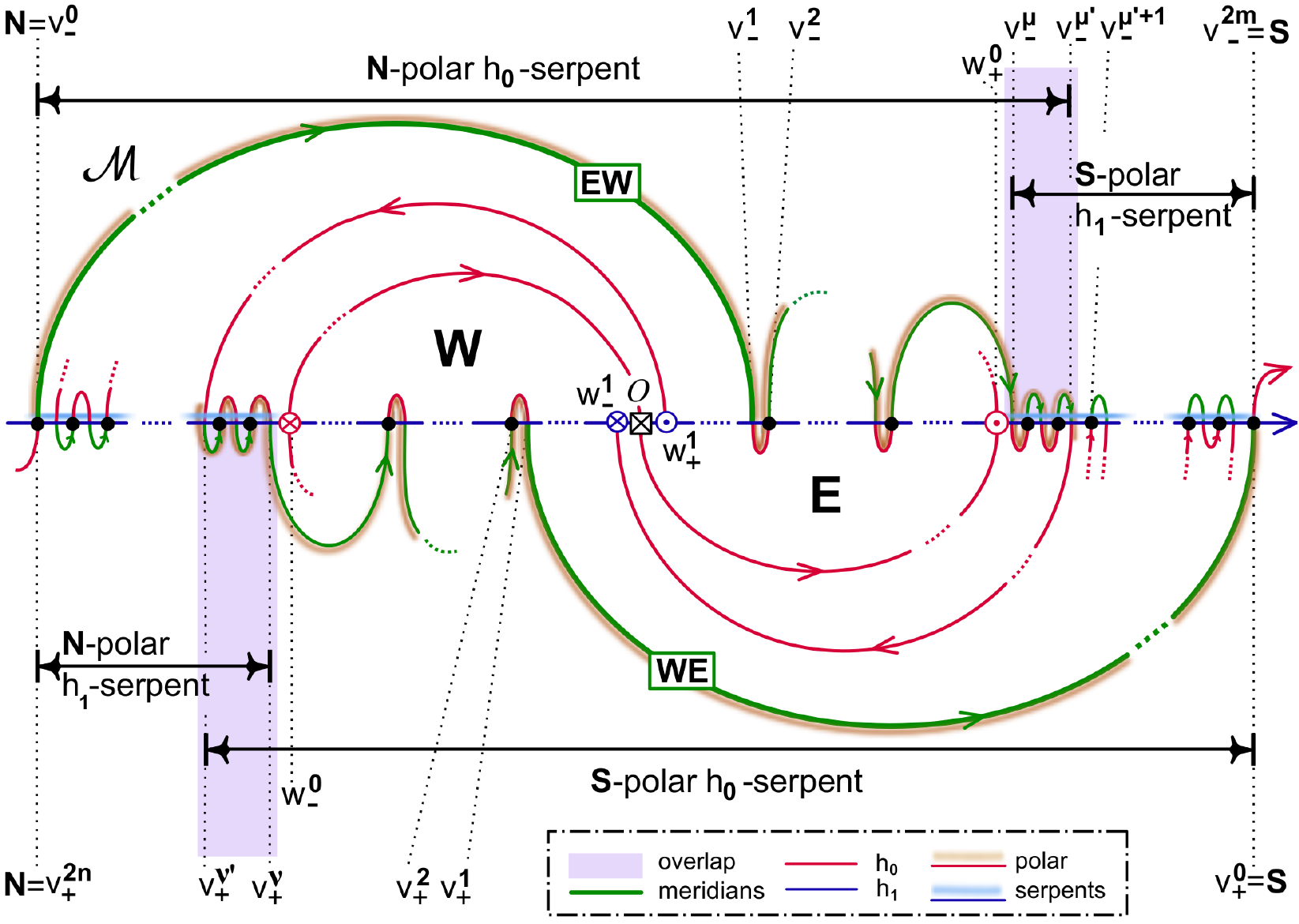}
\caption{\emph{
A 3-meander template.
Note how the $\mathbf{N}$-polar $h_1$-serpent $\mathbf{N} = v_+^{2n} \ldots  v_+^\nu$ is terminated at $v_+^\nu$ by the $h_1$-adjacent source $w_-^0$ which is, both, $h_1$-extreme minimal and the lower $h_0$-neighbor of $\mathcal{O}$.
From $v_+^{\nu '}$ to $v_+^\nu$, this serpent overlaps the anti-polar, i.e. $\mathbf{S}$-polar, $h_0$-serpent $v_+^{\nu '} \ldots  v_+^\nu \ldots  v_+^0  = \mathbf{S}$.
Similarly, the $\mathbf{N}$-polar $h_0$-serpent $\mathbf{N} = v_-^0 \ldots  v_-^{\mu '}$ overlaps the anti-polar, i.e. $\mathbf{S}$-polar, $h_1$-serpent $v_-^\mu  \ldots  v_-^{\mu '} \ldots  v_-^{2n} = \mathbf{S}$, from $v_-^\mu$ to $v_-^{\mu '}$.
The $h_1$-neighbors $w_\pm^1$ of $\mathcal{O}$ are  the $h_0$-extreme sources, by the two polar $h_0$-serpents.
Similarly, the $h_0$-neighbors $w_\pm^0$ of $\mathcal{O}$ define the $h_1$-extreme sources.
See also  fig.~\ref{fig:6.4} for a specific example.
}}
\label{fig:1.3}
\end{figure}

\begin{defi}\label{def:1.3}
An abstract Sturm meander $\mathcal{M}$ with axis intersections $v \in \mathcal{E}$ is called a \emph{3-meander template} if the following four conditions hold, for $\iota = 0,1$.

\begin{itemize}
\item[(i)] $\mathcal{M}$ possesses a single axis intersection $v = \mathcal{O}$ with Morse number $i_{\mathcal{O}} =3$, and  no other Morse number exceeds $2$.
\item[(ii)] Polar $h_\iota$-serpents overlap with their anti-polar $h_{1-\iota}$-serpents in at least one shared vertex.
\item[(iii)] The intersection $v=\mathcal{O}$ is located between the two intersection points, in the order of $h_{1-\iota}$, of the polar arc of any polar $h_\iota$-serpent.
\item[(iv)] The $h_\iota$-neighbors $w^\iota_\pm$ of $v=\mathcal{O}$ are the $i=2$ sources which terminate the polar $h_{1-\iota}$-serpents.
\end{itemize}
\end{defi}

See fig.~\ref{fig:1.3} for an illustration of 3-meander templates.
Property (iv), for example, asserts that the $h_\iota$-neighbor sources $w_\pm^\iota$ of $\mathcal{O}$ are the $h_{1-\iota}$-extreme sources, for $\iota = 0,1$. For the Sturm boundary orders $h^f_\iota$
this is a useful exercise in polar serpents, as we will show in lemma~\ref{lem:4.3}(iii) and \eqref{eq:4.24} below.

In theorem~\ref{thm:5.2} below we will establish the passage
	\begin{equation}
	\text{3-cell template} \quad \Longrightarrow \quad
	\text{3-meander template}\,.
	\label{eq:1.27}
	\end{equation} 
This is based on a detailed construction of paths $h_0$ and $h_1$ in the given 3-cell template.
The construction relies heavily on our trilogy \cite{firo09, firo08, firo10} for the planar case.
In fact we construct $h_0$ and $h_1$, separately, for each closed hemisphere $\mathrm{clos\,} \mathbf{W}$ and $\mathrm{clos\,} \mathbf{E}$.
Each closed hemisphere disk, by itself, can be viewed as a planar Sturm attractor.
In section~\ref{sec5} we then weld the closed hemispheres $\mathrm{clos\,} \mathbf{W}$ and $\mathrm{clos\,} \mathbf{E}$ along their meridian boundaries, and stitch the planar hemisphere meanders, to explicitly derive the 3-meander template.
Although this step is pervasively motivated by its ODE and PDE background, it proceeds in the abstract setting of 3-cell templates and 3-meander templates, entirely.

We conclude the paper with a nonexistence result for the solid 3-dimensional  octahedron $\mathbb{O}$, in section~\ref{sec6}.
In fact, choose the poles $\Sigma_\pm^0 = \lbrace v_\pm \rbrace$, alias $\mathbf{N}$ and $\mathbf{S}$, to be antipodal sink vertices of the octahedron.
In view of dissipativeness, this extremal choice for the monotone $z=0$ order may appear most natural.
Surprisingly, however, it is then impossible to choose any bipolar orientation of the octahedral 1-skeleton, from $\mathbf{N}$ to $\mathbf{S}$, and a meridian decomposition into hemispheres $\mathbf{W}$, $\mathbf{E}$ such that the octahedron $\mathbb{O}$ becomes a 3-cell template in the sense of definition~\ref{def:1.2}.
Theorem~\ref{thm:4.1} therefore defeats our antipodal choice of the poles $\mathbf{N}, \mathbf{S}$ for octahedral Sturm 3-balls,
because the dynamic Thom-Smale complex $\mathcal{C}$ of any octahedral Sturm 3-ball attractor with antipodal extreme equilibria $\mathbf{N}, \mathbf{S}$ would have to provide just such a 3-cell template.

In \cite{firo14}, on the other hand, we proved that any regular cell complex which is the closure of a single 3-cell $c_{\mathcal{O}}$ actually does possess a realization as the regular dynamic Thom-Smale complex of some Sturm 3-ball global attractor.
Our construction there amounts to poles which are adjacent corners of a single octahedral surface triangle.
That triangle face constitutes the whole Western hemisphere $\mathbf{W} = \Sigma_-^2$; see fig.~6.3 in \cite{firo14}.

We conclude our long introduction with a brief preview of the remaining two papers of our 3-ball trilogy.
In \cite{firo3d-2} we further explore the 3-meander template $\mathcal{M}$ of definition~\ref{def:1.3}.
Since $\mathcal{M}$ is a Sturm meander, $\mathcal{M}$ defines a Sturm global attractor $\mathcal{A}$ which turns out to be a 3-ball Sturm attractor.
More precisely, the meander $\mathcal{M}$ determines heteroclinic connectivity and signed zero numbers between all equilibria:
	\begin{equation}
	\text{3-meander template} \quad \Longrightarrow \quad
	\text{signed 2-hemisphere template}\,.
	\label{eq:1.28}
	\end{equation} 
Invoking steps \eqref{eq:1.27}, \eqref{eq:1.28}, and \eqref{eq:1.12}, in this order, provides a dynamic Thom-Smale complex $\mathcal{C}_f$ which originates from an abstractly prescribed 3-cell complex $\mathcal{C}$. Here the dissipative nonlinearity $f$ in \eqref{eq:1.1} is chosen such that $\sigma_f=\sigma$ is the Sturm permutation associated to the 3-meander template $\mathcal{M}$ of \eqref{eq:1.27}. The cell complexes $\mathcal{C}_f$ and $\mathcal{C}$ coincide by a cell-to-cell homeomorphism. This completes the design of a Sturm 3-ball global attractor $\mathcal{A}_f$ with prescribed dynamic  Thom-Smale complex $\mathcal{C}_f=\mathcal{C}$.

In \cite{firo3d-3} we collect many further examples to illustrate our theory.
In particular we construct all solid 3-dimensional tetrahedra, octahedra, and cubes, together with their bipolar orientations and meridian decompositions, as Sturm global attractors.
We also construct all Sturm 3-balls with at most 13 equilibria, and discuss some first steps towards a characterization of all 3-dimensional Sturm global attractors, with more than a single 3-cell.


\textbf{Acknowledgments.}
With great pleasure we express our profound gratitude to  Wal{\-}dyr~M.~Oliva, whose deep geometric insights and friendly challenges are a visible inspiration for us since so many years.
Extended delightful hospitality by the authors is mutually acknowledged.
Suggestions concerning the Thom-Smale complex were generously provided by Jean-Michel Bismut.
Gustavo~Granja generously shared his deeply topological view point, precise references included.
Anna~Karnauhova has contributed all illustrations with great patience, ambition, and her inimitable artistic touch.
Typesetting was expertly accomplished by Ulrike~Geiger. This work
was partially supported by DFG/Germany through SFB 910 project A4 and by FCT/Portugal through project UID/MAT/04459/2013.


\section{Planar Sturm attractors}
\label{sec2}

As a prelude to 3-ball Sturm global attractors we review the planar case, in theorem~\ref{thm:2.1}.
A central construction, in definition~\ref{def:2.2}, assigns a ZS-Hamiltonian pair of paths $h_0, h_1$: $\lbrace 1, \ldots , N \rbrace \rightarrow \mathcal{E}$ through the equilibrium vertices of a prescribed planar bipolar cell complex $\mathcal{C}$.
The construction of $h_0, h_1$ ensures that the permutation $\sigma$:= $h_0^{-1} \circ h_1 \in S_N$ is Sturm, and hence defines a Sturm meander $\mathcal{M}$.
Moreover, the associated Sturm global attractor is planar with dynamic Thom-Smale complex as prescribed by $\mathcal{C}$.
See theorem~\ref{thm:2.4}.
We also discuss in what sense $h_0, h_1$ are unique.
We conclude the section with a special class of planar Sturm disk attractors which we call $\mathbf{W}$- and $\mathbf{E}$-cell templates, for ``West'' and ``East''.
They feature full polar serpents and will serve as closed hemispheres $\mathrm{clos\,} \Sigma_\pm^2(\mathcal{O})$, welded along their shared meridian boundary circle, in 3-ball Sturm global attractors $\mathcal{A}_f = \mathrm{clos\,} W^u(\mathcal{O})$.

In \cite[theorem~1.2]{firo14}, we combined the planar results of \cite{firo09, firo08, firo10} with the Schoenflies result \cite{firo13} as follows.

\begin{thm}\label{thm:2.1}
A regular finite cell complex $\mathcal{C}$ is the dynamic Thom-Smale complex of a planar Sturm global attractor if, and only if, $\mathcal{C} \subseteq \mathbb{R}^2$ is planar, contractible, and the 1-skeleton $\mathcal{C}^1$ of $\mathcal{C}$ possesses a bipolar orientation.
\end{thm}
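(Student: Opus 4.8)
\emph{Plan.} The assertion is an equivalence, so the plan is to prove the two implications separately, using the planar Sturm trilogy \cite{firo09, firo08, firo10} and the Schoenflies embedding theorem \cite{firo13} as the two principal inputs; indeed this is precisely how \cite[theorem~1.2]{firo14} is assembled. I would organize the regularity and contractibility as routine bookkeeping, the bipolar orientation as a dynamical construction, and reserve the real combinatorial work for the converse, realizing, direction.

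For \emph{necessity}, suppose $\mathcal{C} = \mathcal{C}_f$ is the Thom-Smale complex of a planar Sturm attractor $\mathcal{A}_f$, so that every equilibrium has Morse index $i(v) \leq 2$ and $\mathcal{C}$ is at most a $2$-complex. That $\mathcal{C}$ is a finite regular cell complex is the zero-number fact recalled in the discussion around \eqref{eq:1.9}. Contractibility is inherited from $\mathcal{A}_f$ itself, which is contractible under dissipativeness and hyperbolicity. For the embedding $\mathcal{C} \subseteq \mathbb{R}^2$ I would invoke \cite{firo13}: each $2$-cell closure $\mathrm{clos\,}W^u(v)$ is a Schoenflies-embedded disk, and the planar trilogy then assembles these disks into a single planar picture. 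The bipolar orientation of $\mathcal{C}^1$ is produced dynamically: each edge $c_v = W^u(v)$ of a saddle $v$ carries the strictly monotone $z=0$ flow, which I orient from $\Sigma_-^0(v)$ to $\Sigma_+^0(v)$. Since the signed zero numbers $z(\Sigma_\pm^0(v)-v)=0_\pm$, cf.\ \eqref{eq:1.9f}, force $\Sigma_-^0(v) < v < \Sigma_+^0(v)$ pointwise on $[0,1]$, every oriented edge records a strict pointwise inequality; a directed cycle would chain into $s_0 < s_1 < \ldots < s_0$, which is absurd, so no directed cycles occur. Dissipativeness, together with the trilogy, singles out the common pointwise-extreme sinks $h^f_0(1)=h^f_1(1)$ and $h^f_0(N)=h^f_1(N)$ as the unique source and sink poles $\mathbf{N}, \mathbf{S}$ on the boundary of $\mathcal{C}$, completing bipolarity.

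For \emph{sufficiency} --- the constructive and harder direction --- I am handed an abstract regular, contractible, planar complex $\mathcal{C} \subseteq \mathbb{R}^2$ with bipolar $\mathcal{C}^1$, and must manufacture a dissipative $f$ realizing it. The plan is to read off, from the bipolar orientation and the planar embedding, the ZS-Hamiltonian pair of paths $h_0, h_1$ through the equilibrium vertices --- the construction of definition~\ref{def:2.2} below --- and to verify that the resulting permutation $\sigma := h_0^{-1} \circ h_1 \in S_N$ is a Sturm permutation, i.e.\ a dissipative Morse meander in the sense of \eqref{eq:1.20}--\eqref{eq:1.24}. The shooting characterization of \cite{firo99} then supplies a dissipative nonlinearity $f$ with $\sigma_f = \sigma$ and hyperbolic equilibria, whose Sturm attractor $\mathcal{A}_f$ is planar since $\sigma$ carries no Morse number exceeding $2$.

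The main obstacle is the final identification $\mathcal{C}_f = \mathcal{C}$, cell by cell, as regular complexes. Here I would use the combinatorial connection algorithm of \cite{firo96}: the permutation $\sigma_f$ determines all heteroclinic connections and signed zero numbers, hence the attaching map of every cell of $\mathcal{C}_f$; matching these recovered attaching data against the prescribed bipolar and planar structure of $\mathcal{C}$ --- which is exactly the content distilled across \cite{firo08, firo09, firo10} --- closes the loop and certifies that the two complexes agree via a cell-to-cell homeomorphism. Thus the equivalence is the clean repackaging the references advertise, with the genuine labor hidden in the faithful reconstruction of $\mathcal{C}$ from $\sigma_f$.
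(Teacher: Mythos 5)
Your proposal is correct and follows essentially the same route as the paper: the necessity direction via regularity of the Thom--Smale complex, contractibility of $\mathcal{A}_f$, the Schoenflies embedding, and the dynamically defined bipolar orientation of saddle edges; the sufficiency direction via the ZS-pair $(h_0,h_1)$, the resulting Sturm permutation $\sigma = h_0^{-1}\circ h_1$, and the identification $\mathcal{C}_{\mathrm{ZS}}=\mathcal{C}$ delegated to the planar trilogy and \cite{firo14, firo13}. The paper itself only sketches this, citing \cite[theorem~1.2]{firo14} for the full argument, so your outline matches both in structure and in the division of labor between the cited references.
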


Both poles $\mathbf{N}$, $\mathbf{S}$ of the bipolar orientation are required, here, to lie on the boundary of the planar embedding $\mathcal{C} \subseteq \mathbb{R}^2$.
We say that the bipolar orientation runs from $\mathbf{N}$ to $\mathbf{S}$.
See fig.~\ref{fig:2.1} for a simple disk example, and \cite{firo10} for a planar octahedral complex with edge adjacent poles $\mathbf{N}$ and $\mathbf{S}$.

Given a planar Sturm global attractor, the bipolar orientation of the 1-skeleton $\mathcal{C}^1$ is easily defined.
Edges are the one-dimensional unstable manifolds $W^u(v)$ of saddles 
$i(v) =1$.
On $W^u(v)$ we have $z(u^1-u^2)=0$, for any two nonidentical spatial profiles $x \mapsto u^\iota(x)$.
Therefore the spatial profiles $u(x)$ in $W^u(v)$ are totally ordered, strictly monotonically, uniformly for any fixed $0 \leq x \leq 1$.
We may orient the edge towards increasing $u$.
This definition can also be derived from just the signed hemisphere template $\Sigma_\pm^j(v)$ of $\mathcal{A}$, as an orientation of $W^u(v)$ from $\Sigma_-^0(v)$ to $\Sigma_+^0(v)$; see our comments to \eqref{eq:1.13}, \eqref{eq:1.12}.

Conversely suppose we are given the planar regular complex $\mathcal{C}$ with bipolar orientation of $\mathcal{C}^1$.
To label the vertices $v \in \mathcal{E}$ of $\mathcal{C}$, we construct a pair of Hamiltonian paths
	\begin{equation}
	h_0, h_1: \quad \lbrace 1, \ldots , N \rbrace \rightarrow \mathcal{E}
	\label{eq:2.1}
	\end{equation}
as follows.
Let $\mathcal{O}$ indicate any source, i.e. (the barycenter of) a 2-cell  face $c_{\mathcal{O}}$ in $\mathcal{C}$.
By planarity of $\mathcal{C}$ it turns out that the bipolar orientation of $\mathcal{C}^1$ defines unique extrema on the boundary circle $\partial c_{\mathcal{O}}$ of the 2-cell $c_\mathcal{O}$.
Let $w_-^0$ be the saddle on $\partial c_{\mathcal{O}}$ (of the edge) to the right of the minimum, and $w_+^0$ the saddle to the left of the maximum.
Similarly, let $w_-^1$ be the saddle to the left of the minimum, and $w_+^1$ to the right of the maximum.
See fig.~\ref{fig:2.1}.

\begin{figure}[t!]
\centering \includegraphics[width=0.43\textwidth]{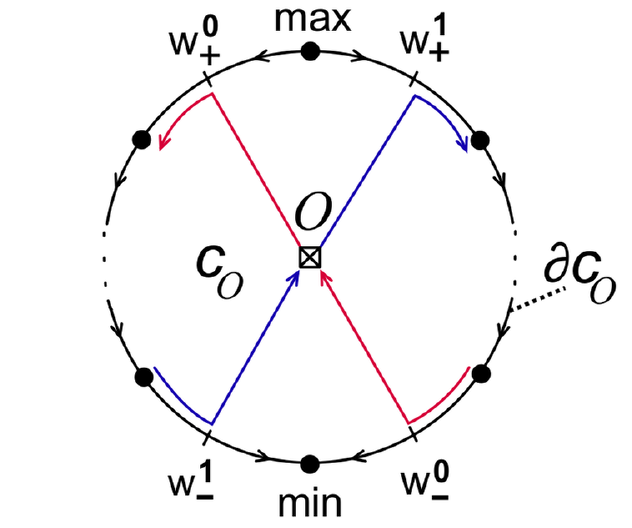}
\caption{\emph{
Traversing a face vertex $\mathcal{O}$ by a ZS-pair $h_0, h_1$.
Note the resulting shapes ``Z'' of $h_0$ (red) and ``S'' of $h_1$ (blue).
The paths $h_\iota$ may also continue into neighboring faces, beyond $w_\pm^\iota$, without turning into the face boundary $\partial c_{\mathcal{O}}$.
}}
\label{fig:2.1}
\end{figure}

\begin{figure}[]
\centering \includegraphics[width=\textwidth]{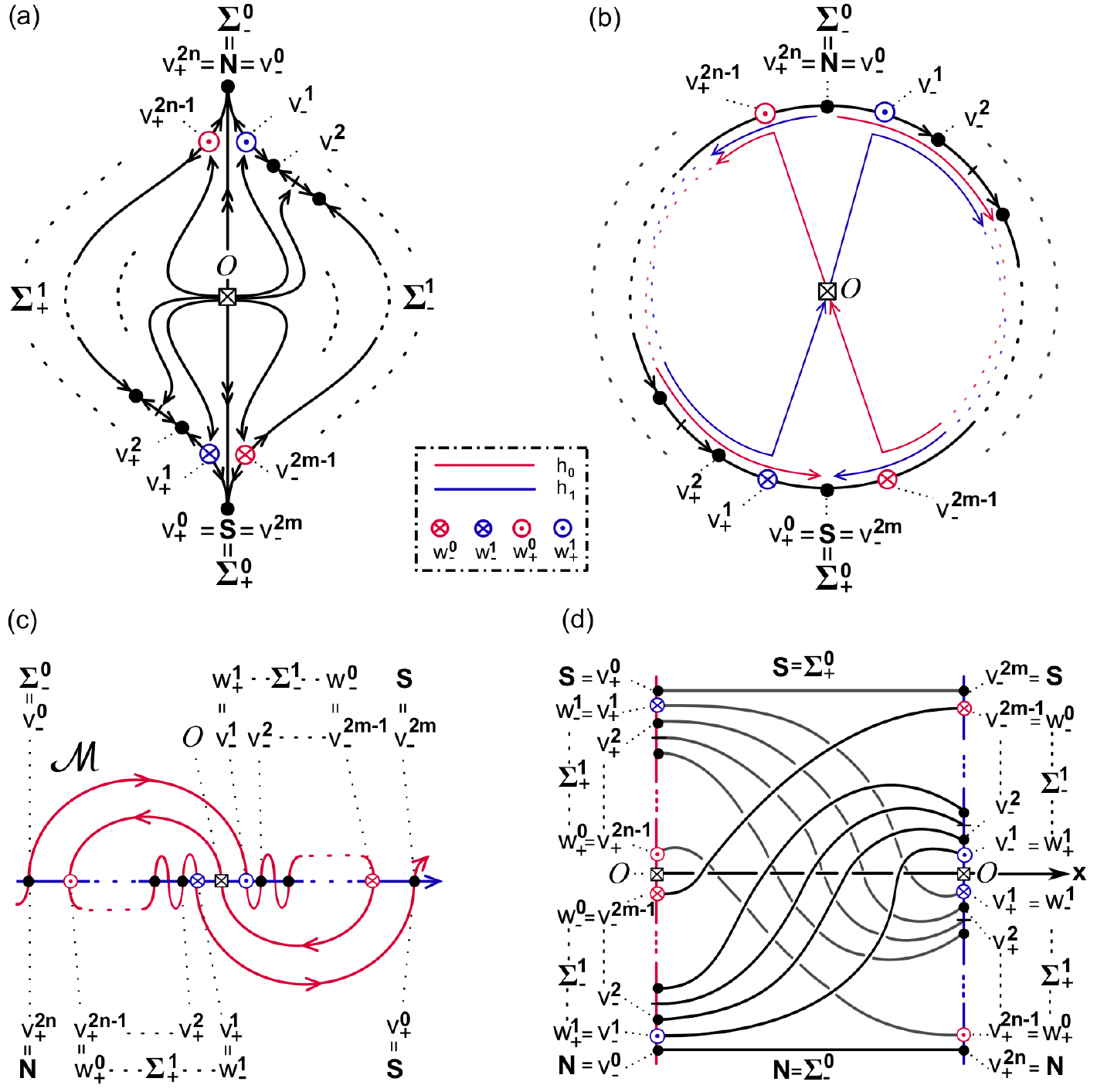}
\caption{\emph{
The Sturm disk with source $\mathcal{O}$, $m+n$ sinks, $m+n$ saddles, and hemisphere decomposition $\Sigma_\pm^j$, $j=0,1$, of $\mathcal{A} = \mathrm{clos\,} W^u(\mathcal{O})$.
(a) Dynamical view of the Sturm global attractor $\mathcal{A}$ with source equilibrium $\mathcal{O}$.
Saddles and sinks are enumerated by $v_\pm^k$ with odd and even exponents $k$, respectively.
Arrows indicate time evolution.
In addition to the connection graph $\mathcal{H}_f$, double arrows indicate the one-dimensional fast unstable manifold of $\mathcal{O}$ with boundary equilibria $\Sigma^0 = \lbrace \mathbf{N}, \mathbf{S}\rbrace$.
(b) The associated dynamic Thom-Smale complex $\mathcal{C}$.
Arrows on the circle boundary indicate the bipolar orientation of the edges of the 1-skeleton.
Edges are the whole one-dimensional unstable manifolds of the saddles; the orientation of the edge runs against the time direction on half of each edge.
The poles $\mathbf{N}$, $\mathbf{S}$ are the extrema of the bipolar orientation.
The bipolar orientation determines the ZS-pair $(h_0,h_1)$, by definition~\ref{def:2.2}.
Colors: $h_0$ (red), $h_1$ (blue).
(c) The meander $\mathcal{M}$ defined by the ZS-pair $(h_0, h_1)$ of (b).
Equilibria $v \in \mathcal{E}$ are ordered according to the oriented path $h_1$ (blue), increasing along the horizontal axis.
The oriented path $h_0$ (red) defines the arcs of the meander $\mathcal{M}$.
Note the two full polar $h_0$-serpents $v_-^0v_-^1 \ldots v_-^{2m-1}$ and $v_+^0v_+^1 \ldots v_+^{2n-1}$.
The two full polar $h_1$-serpents are $ v_+^{2n} \ldots v_+^1$ and $v_-^{1} \ldots v_-^{2m}$. 
(d) The equilibrium ``spaghetti'' $\mathcal{E}$.
The paths $h_0$ and $h_1$ are the orderings of $v \in \mathcal{E}$ by increasing boundary values $v(x)$ at the Neumann boundaries $x=0$ and $x=1$, respectively.
Note how the $h_\iota$-neighboring saddles to the source $\mathcal{O}$, at $x=\iota$, become the $h_{1-\iota}$-extreme saddles at the opposite boundary.
}}
\label{fig:2.2}
\end{figure}

\begin{defi}\label{def:2.2}
The bijections $h_0, h_1$ in \eqref{eq:2.1} are called a \emph{ZS-pair} $(h_0, h_1)$ in the finite, regular, planar and bipolar cell complex $\mathcal{C} = \bigcup_{v \in \mathcal{E}} c_v$ if the following three conditions all hold true:

\begin{itemize}
\item[(i)] $h_0$ traverses any face from $w_-^0$ to $w_+^0$;
\item[(ii)] $h_1$ traverses any face from $w_-^1$ to $w_+^1$
\item[(iii)] both $h_\iota$ follow the bipolar orientation of the 1-skeleton $\mathcal{C}^1$, if not already defined by (i), (ii).
\end{itemize}

We call $(h_0,h_1)$ an \emph{SZ-pair}, if $(h_1, h_0)$ is a ZS-pair, i.e. if the roles of $h_0$ and $h_1$ in the rules (i) and (ii) of the face traversals are reversed.
\end{defi}

The significance of ZS-pairs $(h_0,h_1)$ in the proof of theorem~\ref{thm:2.1} lies in their associated permutation
	\begin{equation}
	\sigma := h_0^{-1} \circ h_1 \in S_N\,;
	\label{eq:2.2}
	\end{equation}
see \eqref{eq:1.6}, \eqref{eq:1.19b}.
It turns out that $\sigma$ is a Sturm permutation, i.e. a dissipative Morse meander $\mathcal{M}$.
Let $\mathcal{A}$ be the associated Sturm global attractor, and $\mathcal{C}_{\text{ZS}}$ the associated dynamic Thom-Smale complex of $\mathcal{A}$.
Then
	\begin{equation}
	\mathcal{C}_{\text{ZS}} = \mathcal{C}
	\label{eq:2.3}
	\end{equation}
proves the if-part of theorem~\ref{thm:2.1}.
See \cite{firo14, firo13} for full details.
Equality in \eqref{eq:2.3} is understood in the sense of homeomorphic equivalence of regular cell complexes.
The cells in $\mathcal{C} = \cup_{v\in \mathcal{E}}c_v$ are indexed by an abstract finite set of $v \in \mathcal{E}$.
In the dynamic Thom-Smale complex $\mathcal{C}_{\text{ZS}} = \cup_{v\in \mathcal{E}}c_v^{\text{ZS}}$, the index $v$ is an intersection of the Sturm meander $\mathcal{M}$ with the horizontal axis, alias a Neumann equilibrium of the Sturm attractor realization \eqref{eq:1.1}.
For a more detailed discussion of our notion of equivalence, and a signed hemisphere refinement, we refer to our sequel \cite{firo3d-2}.

In fig.~\ref{fig:2.2} we illustrate theorem~\ref{thm:2.1} and definition~\ref{def:2.2} for the simple case of a single 2-disk, with $m+n$ sinks and $m+n$ saddles on the boundary, and with a single source $\mathcal{O}$.
The bipolar orientation of the 1-skeleton, in (b), in fact follows from the boundary $\Sigma^0 = \lbrace \mathbf{N}, \mathbf{S}\rbrace$ of the fast unstable manifold $W^{uu}(\mathcal{O})$, in (a).
Indeed $z(v-\mathcal{O}) = 0_\pm$ uniquely characterizes $v \in \Sigma_\pm^0$; see also proposition~\ref{prop:3.1}(iii).

Geometrically, however, there remain some general choices here.
The $u$-flip
	\begin{equation}
	u \mapsto -u
	\label{eq:2.4}
	\end{equation}
in the PDE \eqref{eq:1.9} induces a linear isomorphism $\mathcal{A} \rightarrow -\mathcal{A}$ of the Sturm attractors, reverses all bipolar orientations in the Sturm complex (b), rotates the Sturm meander $\mathcal{M}$ by $180^\circ$, and reverses the boundary orders of $h_0,h_1$ in (d) by
	\begin{equation}
	h_0 \mapsto h_0\kappa\,, \quad
	h_1 \mapsto h_1 \kappa\,.
	\label{eq:2.5}
	\end{equation}
Here the involution $\kappa	(j)$:= $ N+1-j$ in $S_N$ flips $\lbrace 1, \ldots , N\rbrace$.
This conjugates the Sturm permutation $\sigma = h_0^{-1} h_1$ by
	\begin{equation}
	\sigma \mapsto \kappa \sigma \kappa\,.
	\label{eq:2.6}
	\end{equation}
For the $(m,n)$ 	Sturm disk (a), after planar rotation by $180^\circ$, this amounts to the flip $(m,n) \mapsto (n,m)$.

Another ambiguity arises from the orientation of the planar embeddings $\mathcal{A}, \mathcal{C} \subseteq \mathbb{R}^2$.
Reversing orientation of $\mathcal{C}$, e.g. by reflection through the vertical $\mathbf{SN}$-axis, interchanges
	\begin{equation}
	h_0 \leftrightarrow h_1
	\label{eq:2.7}
	\end{equation}
to become an SZ-pair.
In terms of the PDE \eqref{eq:1.1} this is effected by the $x$-flip
	\begin{equation}
	x \mapsto 1-x\,.
	\label{eq:2.8}
	\end{equation}
The bipolar orientation remains unaffected, but the Sturm permutation $\sigma$ gets replaced by its inverse
	\begin{equation}
	h_0^{-1} \circ h_1=\sigma \quad
	\mapsto \quad \sigma^{-1} = h_1^{-1} \circ h_0\,.
	\label{eq:2.9}
	\end{equation}
Specifically, this flips the $(m,n)$ Sturm disk to $(n,m)$.
For a more general example, note how the transformation \eqref{eq:2.7}--\eqref{eq:2.9} relates the two recursion formulae \eqref{eq:1.22a} and \eqref{eq:1.26} for the Morse indices $i(v) = i_v$.

Together, the commuting involutions \eqref{eq:2.4}, \eqref{eq:2.8} on the attractor level of $\mathcal{A}_f$, $\mathcal{H}_f$, alias \eqref{eq:2.5}, \eqref{eq:2.7} on the level of bipolar complexes $\mathcal{C}$, alias \eqref{eq:2.6}, \eqref{eq:2.9} on the level of Sturm meanders $\mathcal{M}$, generate the Klein 4-group $\mathbb{Z}_2 \times \mathbb{Z}_2$.
The composition of the two involutions, for example, is an automorphism of the $(m,n)$ disk.
The following definition applies in the general setting of arbitrary Sturm global attractors.

\begin{defi}\label{def:2.3}
We call the Klein 4-group of involutions generated by \eqref{eq:2.4}--\eqref{eq:2.6} and \eqref{eq:2.7}--\eqref{eq:2.9} the \emph{trivial equivalences} of Sturm global attractors, their dynamic Thom-Smale complexes, and their Sturm meanders, respectively.
\end{defi}

Next we study planar Sturm global attractors and complexes which are \emph{topological disks}.
By this we mean that $\mathcal{A}, \mathcal{C}$ are allowed to contain several sources of Morse index $i=2$, but $\mathcal{A}, \mathcal{C}$ are homeomorphic to the standard closed disk.
We recall definition~\ref{def:1.1} of the signed hemisphere template $\mathcal{E}_\pm^j(v)$ of $\mathcal{A}$, according to equilibria in the hemisphere decomposition $\Sigma_\pm^j(v)$ of $\partial W^u(v)$, for all equilibria $v \in \mathcal{E}$ and $0 \leq j < i(v)$.

\begin{thm}\label{thm:2.4}

\begin{itemize}
\item [] \phantom{linebreak}
\item[(i)] Let $(h_0,h_1)$ be the ZS-pair of a given planar bipolar topological disk complex $\mathcal{C} \subseteq \mathbb{R}^2$ with poles $\mathbf{N}$, $\mathbf{S}$ on the circular boundary of $\mathcal{C}$.
Then the Sturm permutation $\sigma := h_0^{-1}h_1$ defines a unique topological disk Sturm global attractor $\mathcal{A}$ with dynamic Thom-Smale complex $\mathcal{C}$, and hence a unique signed hemisphere template $\mathcal{E}_\pm^j(v)$.
\item[(ii)] Conversely, let $\mathcal{E}_\pm^j(v)$ be the signed hemisphere template of a given planar Sturm global attractor $\mathcal{A}$.
Then $\mathcal{E}_\pm^j(v)$ define a unique bipolar orientation of the planar dynamic Thom-Smale complex $\mathcal{C}$ of $\mathcal{A}$, and hence a unique ZS-pair $(h_0,h_1)$.
\end{itemize}

\end{thm}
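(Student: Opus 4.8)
The theorem has two halves, and the natural strategy is to prove each as a one-to-one passage and then to check they are mutually inverse, so that the word ``unique'' in each part is genuinely earned. Both halves rest on theorem~\ref{thm:2.1}, which already guarantees that a planar contractible regular complex with a bipolar 1-skeleton \emph{is} the Thom-Smale complex of some planar Sturm attractor, together with definition~\ref{def:2.2}, which canonically produces the ZS-pair $(h_0,h_1)$ from the bipolar data. The work is therefore not to build attractors from scratch, but to show that the dictionary
\[
\text{bipolar complex } \mathcal{C}\ \longleftrightarrow\ \text{ZS-pair }(h_0,h_1)\ \longleftrightarrow\ \sigma=h_0^{-1}h_1\ \longleftrightarrow\ \text{signed hemisphere template } \mathcal{E}_\pm^j(v)
\]
is a chain of bijections once a planar embedding with prescribed poles $\mathbf{N},\mathbf{S}$ is fixed.

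\textbf{Part (i): from $\mathcal{C}$ to $\mathcal{A}$.}
First I would feed the given bipolar topological disk complex $\mathcal{C}$ into definition~\ref{def:2.2} to obtain the ZS-pair $(h_0,h_1)$; the face-traversal rules (i)--(iii) determine $h_\iota$ uniquely once the planar embedding and the poles are fixed, because at each face the saddles $w_\pm^\iota$ are singled out by the bipolar extrema on $\partial c_{\mathcal{O}}$, and outside the faces both paths are forced to follow the edge orientation. Then $\sigma:=h_0^{-1}h_1$ is a Sturm permutation (this is exactly the content behind \eqref{eq:2.3}: the associated $\mathcal{C}_{\mathrm{ZS}}$ equals $\mathcal{C}$), so by the characterization of Sturm permutations as dissipative Morse meanders it defines a Sturm global attractor $\mathcal{A}$. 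By the rigidity result \cite{firo00}, $\sigma$ determines $\mathcal{A}$ up to $C^0$ orbit equivalence, hence the topological disk $\mathcal{A}$ and its Thom-Smale complex $\mathcal{C}_{\mathrm{ZS}}=\mathcal{C}$ are unique. The signed hemisphere template $\mathcal{E}_\pm^j(v)$ is then read off from $\sigma$ through the signed-zero-number characterization \eqref{eq:1.9h} of proposition~\ref{prop:3.1}, which is purely combinatorial and hence canonical; this gives uniqueness of $\mathcal{E}_\pm^j(v)$.

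\textbf{Part (ii): from $\mathcal{A}$ to $\mathcal{C}$.}
Here the input is the signed hemisphere template, and the claim is that it reconstructs the bipolar orientation and thus the ZS-pair. For an edge $c_v=W^u(v)$ of a saddle, the orientation is forced to run from $\Sigma_-^0(v)$ to $\Sigma_+^0(v)$, exactly the strictly monotone $z=0$ order noted after \eqref{eq:1.13}; since $\Sigma_\pm^0(v)=\mathcal{E}_\pm^0(v)$ is part of the template, every edge orientation is determined, and I must check this orientation is globally bipolar with poles $\mathbf{N},\mathbf{S}$ given by $\Sigma_\pm^0(\mathcal{O})$ on the boundary circle---this follows from acyclicity of the monotone order and theorem~\ref{thm:2.1}. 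The bipolar orientation in turn pins down the extrema on each face boundary, hence the saddles $w_\pm^\iota$, hence the ZS-pair by definition~\ref{def:2.2}; uniqueness is immediate from the construction. Finally I would verify that the two passages are inverse, closing the loop: starting from $\mathcal{C}$, passing to $\sigma$, to $\mathcal{A}$, to its template, and back to a bipolar orientation returns the original bipolar structure, because the edge orientation recovered in (ii) is precisely the monotone order that generated the ZS-pair in (i).

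\textbf{The main obstacle.}
The delicate point is not the edge-level bookkeeping but confirming that the \emph{combinatorially} defined object $\mathcal{E}_\pm^j(v)$ carries exactly as much information as the bipolar orientation, no more and no less. The coarseness warning after \eqref{eq:1.9h}---that the hemisphere template is contained in, but a priori possibly coarser than, the full signed zero matrix---means I must argue that the data actually needed to orient edges ($\Sigma_\pm^0(v)$) and to locate face extrema are all recoverable from the template alone. I expect this to reduce to showing that the $j=0$ hemispheres $\Sigma_\pm^0(v)$ of each saddle, together with the planar embedding, suffice to determine the bipolar structure; the planarity and the topological-disk hypothesis are what make the face extrema unique, so the hard part is really invoking the planar trilogy \cite{firo08, firo09, firo10} correctly to guarantee that no ambiguity survives once the poles $\mathbf{N},\mathbf{S}$ are fixed on the boundary.
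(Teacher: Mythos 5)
Your proposal is correct and follows essentially the same route as the paper: the paper's own proof is a two-line remark deferring to theorem~\ref{thm:2.1} (with the ZS-pair construction of definition~\ref{def:2.2} and equality \eqref{eq:2.3} for part~(i), and the observation that $\Sigma_\pm^0(v)$ orients each saddle edge for part~(ii)), clarifying only that uniqueness of $\mathcal{A}$ means $C^0$ orbit equivalence enhanced by the signed zero-number data. Your more explicit write-up, including the concern about whether the template determines the bipolar orientation, matches the paper's surrounding discussion (and its follow-up in corollary~\ref{cor:2.5}).
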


\begin{proof}[\textbf{Proof.}]
The proof is essentially contained in theorem~\ref{thm:2.1}.
Uniqueness of $\mathcal{A}$ is understood in the sense of $C^0$ orbit equivalence, enhanced by the sign information on the zero numbers $z(\Tilde{v}-v) = j_\pm$ of the equilibria $\Tilde{v} \in \mathcal{E}_\pm^j(v)$.
\end{proof}

As a variant to theorem~\ref{thm:2.4}~(ii), let us assume the sets $\mathcal{E}_\pm^j(v)$ are known, but the information on the precise sign labels $+$ versus $-$ got lost.
Then the proof of (ii) has to address the nonuniqueness of bipolar orientations for the 1-skeleton $\mathcal{C}^1$ of the dynamic Thom-Smale complex $\mathcal{C}$ of $\mathcal{A}$.
Consider a single 2-cell $c_{\mathcal{O}}$, first.
Since $\Sigma^0(\mathcal{O})$ of the fast unstable manifold $W^{uu}(\mathcal{O})$ indicates the two extrema on $\partial c_{\mathcal{O}} \subseteq \mathcal{C}^1$, under any admissible bipolar orientation, we are allowed to choose which extremum is maximal (and hence which is minimal) on $\partial c_{\mathcal{O}}$.
This determines the bipolar orientation on $\partial c_{\mathcal{O}}$, up to a simultaneous orientation reversal of all edges.
By 2-connectedness of $\mathcal{C}^1$, this determines the bipolar orientation everywhere.

However, there remains the orientation ambiguity of the precise planar embedding $c_{\mathcal{O}} \subseteq \mathbb{R}^2$ of our chosen 2-cell.
Reversing that single orientation, however, reverses the embedding orientation of the planar 2-cell complex $\mathcal{C} \subseteq \mathbb{R}^2$, globally.
Together, the two choices above are covered by the trivial equivalences of definition~\ref{def:2.3}.
Since $\Sigma^0(\mathcal{O})$ of 2-cells $c_{\mathcal{O}}$ are the bounding target equilibria of the fast unstable manifolds $W^0 (\mathcal{O})$, this proves the following corollary to theorem~\ref{thm:2.4}(ii).

\begin{cor}\label{cor:2.5}
Consider a planar Sturm global attractor $\mathcal{A}$ which is a topological disk.
Let the connection graph $\mathcal{H}_f$ of $\mathcal{A}$ be given.
Also assume the target equilibrium sets $\Sigma^0(\mathcal{O})$ of the fast unstable manifolds $W^{uu}(\mathcal{O})$ are known, for any $i=2$ source $\mathcal{O}$.
This information defines a bipolar orientation of the planar dynamic Thom-Smale complex $\mathcal{C}$ of $\mathcal{A}$.
The bipolar orientation, and its associated ZS-pair $(h_0,h_1)$ and meander $\mathcal{M}$, are unique, up to the trivial equivalences generated by \eqref{eq:2.4}--\eqref{eq:2.9}.
\end{cor}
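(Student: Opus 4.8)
The plan is to deduce the corollary from theorem~\ref{thm:2.4}(ii) by reconstructing the signed hemisphere template $\mathcal{E}_\pm^j(v)$ from the coarser data $(\mathcal{H}_f,\,\Sigma^0(\mathcal{O}))$, and then to track precisely which part of that template remains undetermined. First I would observe that the embedded planar complex $\mathcal{C}$ of $\mathcal{A}$, together with the connection graph $\mathcal{H}_f$, pins down the $1$-skeleton $\mathcal{C}^1$ as a $2$-connected planar graph whose vertices are the sinks and whose edges are the saddles, and records each face boundary circle $\partial c_{\mathcal{O}}$ as a cyclic word of alternating saddles and sinks. The sets $\Sigma^0(\mathcal{O})$ then name the two extremal vertices $\Sigma_-^0(\mathcal{O})\,\dot{\cup}\,\Sigma_+^0(\mathcal{O})$ on each such circle, but withhold the sign labels that distinguish the local minimum from the local maximum. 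Hence, compared with the full signed template used in theorem~\ref{thm:2.4}(ii), exactly two pieces of information are missing: (a) the $+$ versus $-$ assignment on the extremal pair of each face, and (b) the orientation of the planar embedding $\mathcal{C}\subseteq\mathbb{R}^2$.

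Next I would carry out the orientation-propagation argument already sketched before the statement. Fix one source $\mathcal{O}_0$ and declare one vertex of $\Sigma^0(\mathcal{O}_0)$ to be its local maximum and the other its local minimum. In any bipolar orientation the circle $\partial c_{\mathcal{O}_0}$ splits, at its unique local source and sink, into two arcs each directed from minimum to maximum; since these split points are now fixed, every edge of $\partial c_{\mathcal{O}_0}$ acquires a determined orientation. I would then propagate across shared edges: for a neighboring face $c_{\mathcal{O}'}$ meeting $c_{\mathcal{O}_0}$ in an edge $e$, the two candidate orientations of $\partial c_{\mathcal{O}'}$, corresponding to the two sign choices on $\Sigma^0(\mathcal{O}')$, are reverses of one another and so differ on every boundary edge; thus the already-fixed orientation of $e$ selects one of them. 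Iterating along the face adjacencies, which are connected by $2$-connectedness of $\mathcal{C}^1$, orients all edges. The main obstacle is to show that this propagation is globally consistent and yields a genuine bipolar orientation with poles $\mathbf{N},\mathbf{S}$ on the outer boundary; I would resolve it by invoking the existence of at least one admissible bipolar orientation, supplied by the dynamics through theorem~\ref{thm:2.4}(ii), so that consistency around any cycle of faces is automatic, while the propagation itself yields uniqueness once the ordered pair on $\mathcal{O}_0$ is fixed. Reversing that single ordered pair reverses every edge, so there are exactly two admissible bipolar orientations, related by global reversal.

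Finally I would match the two residual choices with the trivial equivalences of definition~\ref{def:2.3}. The global reversal just found is the $u$-flip \eqref{eq:2.4}--\eqref{eq:2.6}, reversing all edge orientations and sending $\sigma\mapsto\kappa\sigma\kappa$. The embedding ambiguity (b), realized by reflection through the $\mathbf{SN}$-axis, leaves the abstract bipolar orientation unchanged but interchanges $h_0\leftrightarrow h_1$, turning the ZS-pair into the associated SZ-pair and sending $\sigma\mapsto\sigma^{-1}$; this is the $x$-flip \eqref{eq:2.7}--\eqref{eq:2.9}. These two commuting involutions generate the Klein $4$-group of trivial equivalences, and by theorem~\ref{thm:2.4}(ii) each admissible choice determines a unique bipolar orientation, ZS-pair $(h_0,h_1)$, and meander $\mathcal{M}$. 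Therefore the data $(\mathcal{H}_f,\,\Sigma^0(\mathcal{O}))$ fixes this triple up to exactly these trivial equivalences, as claimed.
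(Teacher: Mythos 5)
Your argument is correct and follows essentially the same route as the paper, whose proof is the paragraph immediately preceding the corollary: fix the max/min assignment on one face boundary (determining its orientation up to simultaneous reversal), propagate via 2-connectedness of $\mathcal{C}^1$, and absorb the two residual choices — global edge reversal and planar embedding orientation — into the trivial equivalences \eqref{eq:2.4}--\eqref{eq:2.9}. Your version merely spells out the face-to-face propagation and the consistency check in more detail than the paper does.
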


\begin{figure}[t!]
\centering \includegraphics[width=\textwidth]{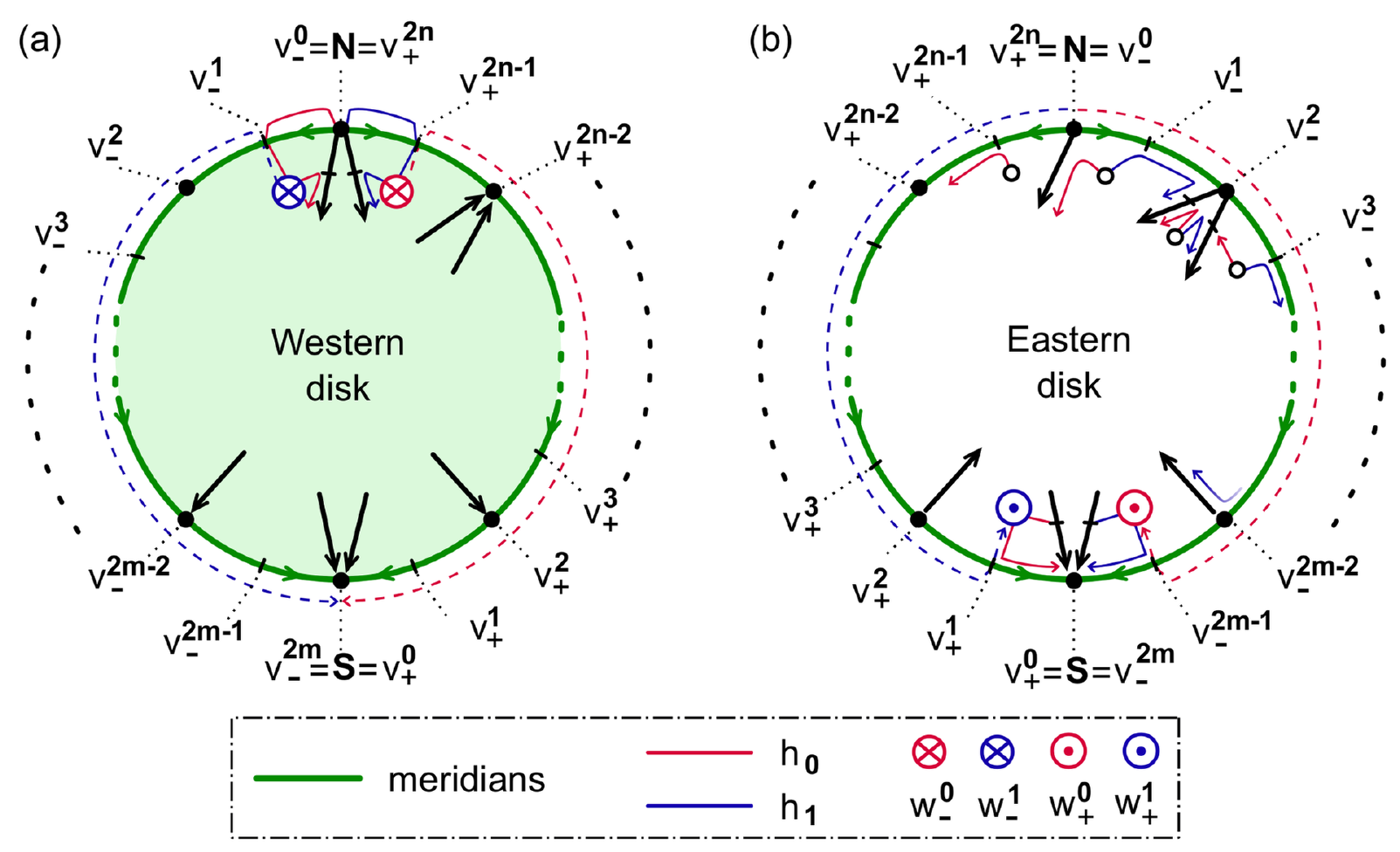}
\caption{\emph{
Western $(\mathbf{W})$ and Eastern $(\mathbf{E})$ planar topological disk complexes.
In $\mathbf{W}$, (a), all edges of the 1-skeleton $\mathbf{W}^1$ with a vertex $v \neq \mathbf{N}$ on the disk boundary are oriented towards $v$, i.e. towards the boundary of $\mathbf{W}$.
In $\mathbf{E}$, (b), all 1-skeleton edges with a vertex $v\neq \mathbf{S}$ on the disk boundary are oriented away from $v$,
i.e. towards the interior of $\mathbf{E}$.
Note the respective full $\mathbf{S}$-polar $h_0,h_1$-serpents $v_+^{2n-1} \ldots v_+^0 = \mathbf{S}$, $v_-^1 \ldots v_-^{2m} = \mathbf{S}$, dashed red/blue in (a), and the full $\mathbf{N}$-polar $h_0, h_1$-serpents $\mathbf{N} = v_-^0 \ldots v_-^{2m-1}$,
$\mathbf{N} = v_+^{2n} v_+^{2n-1} \ldots v_+^1$, dashed red/blue in (b).
Here we use ZS-pairs $(h_0,h_1)$ in $ \mathbf{E}$, but SZ-pairs $(h_0,h_1)$ in $\mathbf{W}$.
}}
\label{fig:2.3}
\end{figure}

In the 2-sphere boundary $\Sigma^2(\mathcal{O})$ of the Sturm 3-ball we will later weld closed Western and Eastern hemispheres $\mathrm{clos\,} \mathbf{W} = \mathrm{clos\,} \Sigma_-^2(\mathcal{O})$ and $\mathrm{clos\,} \mathbf{E} = \mathrm{clos\,}\Sigma_+^2(\mathcal{O})$ along their shared meridian $\Sigma^1(\mathcal{O}) = \mathrm{clos\,}\Sigma_-^2(\mathcal{O})\, \cap\, \mathrm{clos\,}\Sigma_+^2(\mathcal{O})$.
See definition~\ref{def:1.2}(ii),(iii).
For now, we define closed \emph{Western} and \emph{Eastern} planar topological disk complexes $\mathrm{clos\,}\mathbf{W}$ and $\mathrm{clos\,}\mathbf{E}$, accordingly, to fit that earlier definition.
However, we consider these planar disks separately, for now, each with its own associated pair $(h_0,h_1)$ of Hamiltonian paths.

\begin{defi}\label{def:2.6}
A bipolar topological disk complex $\mathrm{clos\,} \mathbf{E}$ with poles $\mathbf{N}, \mathbf{S}$ on the circular boundary $\partial \mathbf{E}$ is called \emph{Eastern disk}, if any edge of the 1-skeleton in $\mathbf{E}$, with at least one vertex $v \in \partial \mathbf{E} \setminus \mathbf{S}$, is oriented away from that boundary vertex $v$, 
i.e. towards the interior of $\mathbf{E}$.
Similarly, we call such a complex $\mathrm{clos\,} \mathbf{W}$ \emph{Western disk}, if any edge of the 1-skeleton in $\mathbf{W}$, with at least one vertex $v \in \partial \mathbf{W} \setminus \mathbf{N}$, is oriented towards that boundary vertex $v$,
i.e. towards the boundary of $\mathbf{W}$.
\end{defi}

\begin{lem}\label{lem:2.7}
Let  $\mathrm{clos}\, \mathbf{W}, \mathrm{clos\,}\mathbf{E}$ denote two arbitrary bipolar topological disk complexes with poles $\mathbf{N}, \mathbf{S}$ on their circular boundaries.
Let $(h_0,h_1)$ denote the associated SZ-pair for $\mathrm{clos\,}\mathbf{W}$, or the ZS-pair of $\mathrm{clos\,}\mathbf{E}$.
Let $\sigma$:= $h_0^{-1} h_ 1$ denote the associated permutation with Sturm meander $\mathcal{M}$.

Then the planar disk $\mathrm{clos\,}\mathbf{W}$ is Western, if and only if the $\mathbf{S}$-polar $h_\iota$-serpents are full, for $\iota = 0,1$, i.e. they contain all points of their respective boundary half-circle, except the antipodal pole $\mathbf{N}$.

Similarly, the planar disk $\mathrm{clos\,} \mathbf{E}$ is Eastern, if and only if the $\mathbf{N}$-polar $h_\iota$-serpents are full, for $\iota = 0,1$, i.e. they contain all points of their respective boundary half-circle, except the antipodal pole $\mathbf{S}$.
\end{lem}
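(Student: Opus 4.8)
The plan is to use the trivial equivalences of Definition~\ref{def:2.3} to collapse the four cases of the lemma into one, and then to translate each side of the remaining equivalence into a statement about the orientation of chords and the interleaving of sources in the enumeration $h_1$. First I would reduce the bookkeeping. The $u$-flip $u\mapsto -u$ of \eqref{eq:2.4}--\eqref{eq:2.6} reverses the bipolar orientation and interchanges the poles $\mathbf{N}\leftrightarrow\mathbf{S}$; it therefore carries ``Western'' to ``Eastern'' and ``$\mathbf{S}$-polar'' to ``$\mathbf{N}$-polar'', so the two assertions of the lemma are equivalent under this involution. The $x$-flip $x\mapsto 1-x$ of \eqref{eq:2.7}--\eqref{eq:2.9} interchanges $h_0\leftrightarrow h_1$ while preserving both the bipolar orientation and the Western property, hence interchanging the roles $\iota=0\leftrightarrow\iota=1$. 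Thus it suffices to prove the single equivalence: $\mathrm{clos\,}\mathbf{W}$ is Western if, and only if, the $\mathbf{S}$-polar $h_1$-serpent is full.

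Next I would record a purely geometric reformulation of Definition~\ref{def:2.6}. Since $\mathcal{C}^1$ is $2$-connected and both poles lie on the boundary circle $\partial\mathbf{W}$, the bipolar orientation splits $\partial\mathbf{W}$ into two directed arcs, each running from $\mathbf{N}$ to $\mathbf{S}$; this is the standard boundary property of bipolar orientations, cf.~\cite{fretal95}. Consequently every boundary edge already points toward its endpoint nearer $\mathbf{S}$, which is a non-$\mathbf{N}$ boundary vertex, so boundary edges automatically satisfy the Western requirement. The Western condition then reduces to the statement that every chord, i.e.\ every edge with exactly one endpoint on $\partial\mathbf{W}$, is oriented outward, from the interior toward its boundary endpoint. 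The lemma thereby becomes: outward orientation of all chords is equivalent to fullness of the $\mathbf{S}$-polar $h_1$-serpent.

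The heart of the matter is that, although every equilibrium on a boundary arc is a sink or a saddle and hence has Morse number $\le 1$, these boundary vertices need not be consecutive in the enumeration $h_1$: a source (a face barycenter) may be interleaved between $\mathbf{S}=h_1(N)$ and the boundary saddles, and it is exactly such an interleaved source that truncates the $\mathbf{S}$-polar $h_1$-serpent short of the saddle $h_0$-adjacent to $\mathbf{N}$. I would trace the SZ-pair path of Definition~\ref{def:2.2}, which enters each face at $w_-^1$, lists that face's source, and leaves at $w_+^1$. If all chords point outward, no non-$\mathbf{N}$ boundary vertex is the tail of an edge leading into a face interior, so the path's final descent to $\mathbf{S}$ runs along one boundary arc with no remaining face to enter; hence no source is interleaved after the $\mathbf{N}$-adjacent saddle, and the serpent is full. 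Conversely, a single inward chord at a non-$\mathbf{N}$ boundary vertex $v$ forces the $h_1$-path to turn into the adjacent face before reaching $\mathbf{S}$, inserting that face's source strictly between the saddle $h_0$-adjacent to $\mathbf{N}$ and $\mathbf{S}$, so the serpent is not full.

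\textbf{The main obstacle} I expect is making this last dictionary rigorous for disks with several faces, where one must control the global order in which the SZ-path visits the faces, and hence the precise positions of the interleaved sources relative to the $\mathbf{S}$-pole. I would handle this by induction on the number of faces, peeling off a face adjacent to $\mathbf{S}$ and invoking $2$-connectedness of $\mathcal{C}^1$ to retain the bipolar and boundary-arc structure of the smaller complex, while carefully tracking the face-entry and face-exit saddles $w_\pm^1$ at each stage. Theorem~\ref{thm:2.4} then guarantees that the permutation $\sigma=h_0^{-1}h_1$ produced at every step is genuinely Sturm, so that the serpent bookkeeping is consistent with an actual meander $\mathcal{M}$, and the Eastern statement follows from the Western one by the $u$-flip.
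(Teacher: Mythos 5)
Your proposal follows essentially the same route as the paper's proof: reduce to a single hemisphere and a single path by the trivial equivalences of definition~\ref{def:2.3}, then translate the Western/Eastern chord-orientation condition at non-polar boundary vertices into the statement that no such vertex is a bipolar extremum of an adjacent face, so that the face-traversal rule of definition~\ref{def:2.2} cannot (respectively, for the converse, must) pull the path off the boundary arc and interleave an $i=2$ source into the polar serpent. The only organizational difference is that where you propose an induction on the number of faces to control the global face-visiting order, the paper argues locally at the offending boundary vertex and cites lemmas~2.1 and 3.3 of \cite{firo09} for the structural facts it needs (boundary arcs oriented pole-to-pole, and boundary adjacency of the face whose extremum sits at that vertex), deriving the $0$-$1$ alternation of Morse numbers along the arc directly from the recursion \eqref{eq:1.22a} rather than from theorem~\ref{thm:2.4}.
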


\begin{proof}[\textbf{Proof.}]
Interchanging $h_0$ and $ h_1$ does not affect the claims.
(The use of SZ-pairs in $\mathrm{clos\,} \mathbf{W}$ is owed to our later use of $\mathbf{W}, \mathbf{E}$ as 3-ball hemispheres.)
Orientation reversal, by trivial equivalences as in definition~\ref{def:2.3}, interchanges $\mathbf{W}$ and $\mathbf{E}$ as well as $h_0$ and $h_1$.
It is therefore sufficient to consider $\mathbf{E}$ and $h_0$.
See also fig.~\ref{fig:2.3}.

Assume first that $\mathrm{clos\,} \mathbf{E}$ is Eastern, and inspect the right half-circle boundary of $\mathbf{E}$ from $\mathbf{N}$ to $\mathbf{S}$.
By \cite[lemma~3.3]{firo09}, the boundary is oriented from $\mathbf{N}$ to $\mathbf{S}$.
In $\mathbf{E}$, edges are oriented away from the boundary, towards the interior of $\mathbf{E}$.
Therefore that right boundary of $\mathbf{E}$ does not contain any $i=0$ sink vertex (other than $\mathbf{S}$) which qualifies as a boundary minimum of any face adjacent to that right boundary.
By definition~\ref{def:2.2} of the Z-path $h_0$, therefore, the path $h_0$ cannot leave the right boundary towards any adjacent face $c_{\mathcal{O}}$ with $i=2$ source $\mathcal{O}$, before reaching the right boundary neighbor $v_-^{2m-1}$ of the pole $\mathbf{S}$.
The serpent property of the initial part $h_0 = v_-^0 \ldots v_-^{2m-1} \ldots$ follows from Morse number formula \eqref{eq:1.22a}.
Indeed $h_1^{-1}$ increases monotonically, along the downward bipolar orientation of the right boundary of $\mathbf{E}$.
Hence $i=0$ sinks and $i=1$ saddles alternate along the right boundary.
Therefore the $\mathbf{N}$-polar $h_0$-serpent is full.

Conversely, assume the ZS-pair $(h_0,h_1)$ in $\mathbf{E}$ provides full $\mathbf{N}$-polar serpents in the bipolar disk complex $\mathbf{E}$.
To show $\mathbf{E}$ is Eastern, indirectly, suppose that the 1-skeleton of $\mathbf{E}$ possesses any edge oriented towards an $i=0$ sink $v \neq \mathbf{S}$ on the boundary of $\mathcal{E}$.
Then we also have a boundary adjacent face $c_{\mathcal{O}}$, with $i=2$ source $\mathcal{O}$, such that $v$ is the bipolar minimum on the 1-skeleton face boundary $\partial c_{\mathcal{O}}$.
Here we use \cite[lemma~2.1]{firo09} to establish boundary adjacency of $c_\mathcal{O}$, by a shared boundary edge.
Suppose $v$ is on the right boundary of $\mathbf{E}$, i.e. $v=v_-^{2k}$ for some $0 <k<m$.
The Z-path $h_0$ originates from $\mathbf{N}$ along the right boundary.
By definition~\ref{def:2.2} of a Z-path, $h_0$ must then leave the right boundary at the saddle $v_-^{2k-1}$ immediately preceding $v$ on that boundary.
Since $v \neq \mathbf{S}$, this contradicts our assumption of a full serpent $h_0$.

If $v$ is on the left half-circle boundary of $\mathbf{E}$, we argue via the S-path $h_1$, to reach an analogous contradiction.
This proves the lemma.
\end{proof}


\section{Zero numbers on hemispheres}\label{sec3}

In this short section we study the closure of the $n$-cell
	\begin{equation}
	\mathrm{clos\,} c_{\mathcal{O}} = \mathrm{clos\,} W^u(\mathcal{O})
	\label{eq:3.1}
	\end{equation}
of any hyperbolic equilibrium $\mathcal{O} \in \mathcal{E}$, $i(\mathcal{O}) =n$, in a Sturm global attractor $\mathcal{A}$.
As always, we assume hyperbolicity of all equilibria.
We investigate the Morse indices $i(v)$ and zero numbers $z(v_1-v_2)$ related to the hemisphere decomposition
	\begin{equation}
	\partial W^u(\mathcal{O}) =
	\bigcup\limits_{j=0}^{n-1} \Sigma_\pm^j
	\label{eq:3.2}
	\end{equation}
of the $(n-1)$-dimensional Schoenflies boundary sphere $\partial c_{\mathcal{O}} = \partial W^u(\mathcal{O}) = 
\Sigma^{n-1}(\mathcal{O})$.
See \eqref{eq:1.9a}--\eqref{eq:1.9e} and, for the special case $n=3$, also \eqref{eq:1.13}, \eqref{eq:1.17} .
See also the templates of figs.~\ref{fig:1.1}, \ref{fig:1.3}, and \ref{fig:3.1}.

\begin{figure}[t!]
\centering \includegraphics[width=\textwidth]{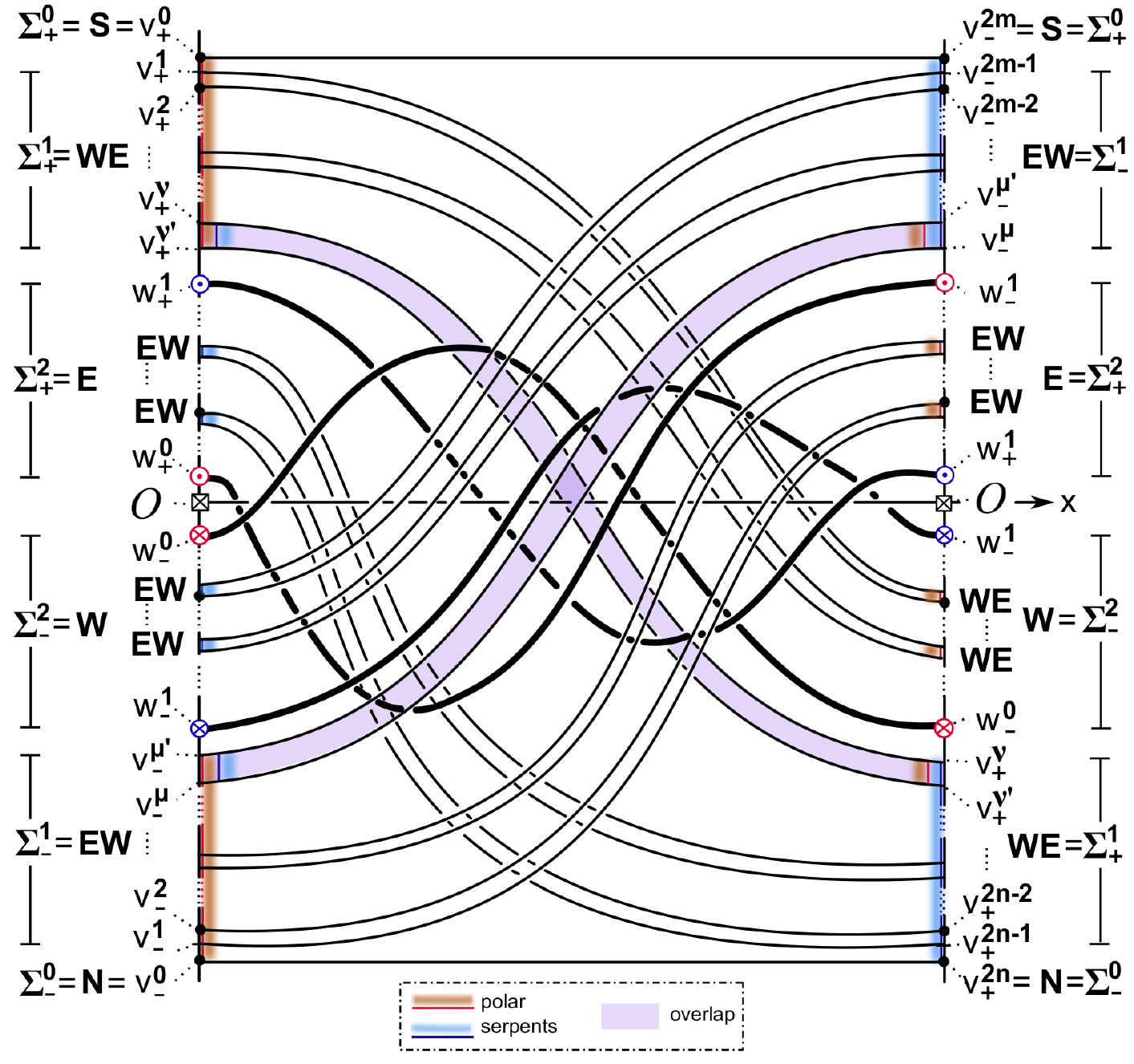}
\caption{\emph{
An impressionist sketch of the spatial profiles $v(x)$, for all equilibria $v\in \mathcal{E}_f$ of a general Sturm 3-ball global attractor.
The drawing illustrates the results of proposition~\ref{prop:3.1}, as well as certain aspects of lemma~\ref{lem:4.3} and our proof of theorem~\ref{thm:4.1}.
For the specific case of a solid octahedron see also fig.~\ref{fig:6.5}.
}}
\label{fig:3.1}
\end{figure}

\begin{prop}\label{prop:3.1}
Under the above assumptions the following statements hold true for all $0 \leq j<i(\mathcal{O})$, equilibria $v,v_1, v_2,$ and for $\delta=\pm$.

\begin{itemize}
\item[(i)] $v \in \Sigma_{\phantom{\pm}}^j \quad \Longrightarrow \quad i(v) \leq j\ $;
\item[(ii)] $v \in \Sigma_{\phantom{\pm}}^j \quad \Longrightarrow \quad z(v-\mathcal{O}) \leq j\ $;
\item[(iii)] $v \in \Sigma_\pm^j \quad \Longrightarrow \quad z(v-\mathcal{O}) =j_\pm\ $;
\item[(iv)] $v_1, v_2 \in \mathrm{clos\,} \Sigma_\delta^j
\quad \Longrightarrow \quad z(v_1-v_2) \leq j-1\ $.
\end{itemize} 
\end{prop}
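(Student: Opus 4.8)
The plan is to argue dynamically, tracing pairs of orbits backwards to the common source $\mathcal{O}$ and exploiting monotonicity of the zero number, rather than attempting any direct algebra among the three quantities $z(v_1-\mathcal{O})$, $z(v_2-\mathcal{O})$ and $z(v_1-v_2)$ (no such algebraic bound holds in general). Two standing facts are used throughout. First, every zero of the difference of two distinct equilibria is simple: two solutions of the second order ODE \eqref{eq:1.3} that agree to first order at a point coincide. Second, along any two distinct solutions of \eqref{eq:1.1} the map $t\mapsto z(u_1(t,\cdot)-u_2(t,\cdot))$ is finite and nonincreasing, and is constant on time intervals where the difference has only simple zeros; see \eqref{eq:1.4} and \cite{an88}.

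The clean prototype is the equatorial contribution $\Sigma^{j-1}=\partial W^j$ in the splitting $\mathrm{clos\,}\Sigma_\delta^j=\Sigma_\delta^j\,\dot{\cup}\,\Sigma^{j-1}$ of \eqref{eq:1.9e}. Here both $v_1,v_2$ lie in $\mathrm{clos\,} W^j$, and I choose orbits $u_1,u_2$ inside the $j$-dimensional fast unstable manifold $W^j$ with $u_\iota(t,\cdot)\to v_\iota$ as $t\to+\infty$; such orbits exist because $\Sigma^{j-1}$ is the $\omega$-limit set of the invariant manifold $W^j$. As $t\to-\infty$ both orbits converge to $\mathcal{O}$ tangentially to $\mathrm{span}(\varphi_0,\dots,\varphi_{j-1})$, the tangent space of $W^j$ at $\mathcal{O}$; hence the normalized difference tends to a nonzero element of this $j$-dimensional span, which by Sturm--Liouville theory has at most $j-1$ sign changes. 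Thus $\lim_{t\to-\infty} z(u_1(t,\cdot)-u_2(t,\cdot))\le j-1$, and monotonicity together with simplicity of the zeros of $v_1-v_2$ at $t\to+\infty$ gives $z(v_1-v_2)\le j-1$. This rests on the asymptotics \eqref{eq:1.9b}, \eqref{eq:1.9c} and \cite{brfi86}.

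For the general case, including $v_1$ or $v_2$ in the open top hemisphere $\Sigma_\delta^j$, I would run the same backward argument on a $\delta$-protocap disk $D_\delta$ from \eqref{eq:1.9e}: a $j$-dimensional, nearly parallel perturbation of $\mathrm{clos\,} W^j$ inside $\mathrm{clos\,} W^{j+1}$ offset in the direction $\delta\varphi_j$, whose $\omega$-limit set is exactly $\mathrm{clos\,}\Sigma_\delta^j$. Since $v_1,v_2\in\omega(D_\delta)$, I pick orbits $u_1,u_2$ through $D_\delta$ converging forward to $v_1,v_2$. The decisive point is that, $D_\delta$ being a constant $\delta\varphi_j$-offset disk, both orbits carry the \emph{same} leading $\varphi_j$-coefficient, so that this slowest mode cancels exactly in $u_1-u_2$. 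Consequently the backward normalized difference again lands in $\mathrm{span}(\varphi_0,\dots,\varphi_{j-1})$ rather than in $\mathrm{span}(\varphi_0,\dots,\varphi_j)$, yielding at most $j-1$ sign changes. Forward monotonicity then gives $z(v_1-v_2)\le z(u_1(0,\cdot)-u_2(0,\cdot))\le j-1$.

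The main obstacle is precisely this cancellation. A backward analysis of generic orbits in $W^{j+1}$ only places the normalized difference in $\mathrm{span}(\varphi_0,\dots,\varphi_j)$ and so yields the weaker bound $j$: any mismatch in the $\varphi_j$-coefficient, however small, is dominated by the slowest eigenvalue $\lambda_j$ and reasserts a $j$-th sign change as $t\to-\infty$. The improvement to $j-1$ is forced by the hypothesis that $v_1$ and $v_2$ lie on the \emph{same} signed hemisphere $\mathrm{clos\,}\Sigma_\delta^j$, which is what lets me select both orbits on a single constant-offset leaf with \emph{exactly} equal $\varphi_j$-asymptotics. Establishing this exact matching, as opposed to the merely approximate ``nearly parallel'' statement, is the technical heart of the argument, and is where I would invoke the protocap construction of \cite{firo13} together with the strong (fast) unstable manifold theory of \cite{brfi86}.
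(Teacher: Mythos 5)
Your proposal addresses only claim (iv) of the proposition; claims (i), (ii) and (iii) are never touched, and (iii) is the substantive one. It asserts the exact signed value $z(v-\mathcal{O})=j_\pm$ for $v\in\Sigma_\pm^j$, which is the characterization \eqref{eq:1.9f} used throughout the rest of the paper. The paper proves (i) by noting that $v\in\Sigma^j$ forces $W^u(v)\subseteq\Sigma^j$ (via the $\lambda$-lemma and transversality) and comparing dimensions; (ii) by the bound $z(u-\mathcal{O})<\dim W^{j+1}$ on $W^{j+1}$ from \cite{brfi86}, extended to the boundary because all zeros of $v-\mathcal{O}$ are simple; and (iii) by an argument you have no analogue of: Wolfrum's characterization \cite{wo02} supplies a heteroclinic orbit $u(t,\cdot)$ from $\mathcal{O}$ to $v$ along which $z(u(t,\cdot)-\mathcal{O})$ is constant and equal to $z(v-\mathcal{O})$; if that constant were $j'\le j-1$, the orbit would be backward tangent to $\pm\varphi_{j'}$, hence lie in $W^{j'+1}$, placing $v\in\Sigma^{j'}\subseteq\Sigma^{j-1}$ and contradicting the disjointness of the decomposition \eqref{eq:3.2}. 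Your opening remark that no algebraic bound relates $z(v-\mathcal{O})$ and $z(v_1-v_2)$ does not excuse omitting (iii); the lower bound $z(v-\mathcal{O})\ge j$, and the sign determination via non-dropping of $t\mapsto z(u(t,\cdot)-\mathcal{O})$, are exactly where the work lies.

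For claim (iv) itself your route coincides with the paper's, which is a two-sentence reduction to the bound $z\le\dim W^j-1$ on $W^j$ from \cite{brfi86} and to the protocap construction of \cite{firo13}. You correctly isolate the crux --- that the backward normalized difference of two protocap orbits must land in $\mathrm{span}(\varphi_0,\dots,\varphi_{j-1})$ rather than pick up the slowest unstable mode $\varphi_j$ --- but your assertion that the protocap is a ``constant $\delta\varphi_j$-offset disk'' with exactly matching asymptotic $\varphi_j$-coefficients is stronger than the ``nearly parallel'' statement recorded in \eqref{eq:1.9e}, and you explicitly defer this matching to \cite{firo13}. That is acceptable, since the paper does the same, but it means your treatment of (iv) is an expansion of a citation rather than an independent argument. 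Net assessment: a reasonable sketch of one quarter of the proposition, with the hardest quarter, claim (iii), missing entirely.
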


\begin{proof}[\textbf{Proof.}]
To prove claim (i) we only have to observe that $v \in \Sigma^j$ implies $W^u(v) \subseteq \Sigma^j$, and take dimensions on both sides of this inclusion.
The inclusion follows from the $\lambda$-Lemma and transversality $W^{j+1} \transv W^s(v)$, as provided by the Morse-Smale property.

Claim (ii) follows from claim (iii). More directly, $z(u-\mathcal{O}) < \dim W^{j+1} = j+1$ for $u\in W^{j+1}$, by \cite{brfi86}.
This extends  to $u=v \in \partial W^{j+1} = \Sigma ^j$, because all zeros of $v-\mathcal{O}$ are simple; see ODE \eqref{eq:1.3}.

Claim (iv) follows from the same statement in $W^j$, and in the near parallel protocaps in $W^{j+1}$, which $\omega$-limit to $\mathrm{clos\,}\Sigma _\pm^j$.
See also \cite{firo13} for further details on the above claims.

Claim (iii), which precisely characterizes equilibria in hemispheres, follows, by induction over $j$ from Wolfrums's characterization of heteroclinicity $v_- \leadsto v_+$.
By \cite{wo02}, $v_- \leadsto v_+$ holds, if and only if there exists a heteroclinic orbit $u(t, \cdot) \rightarrow v_\pm$, for $t \rightarrow \pm \infty$, such that
	\begin{equation}
	z(u(t, \cdot ) -v_\pm) = z(v_+-v_-)
	\label{eq:3.7}
	\end{equation}
holds for all real $t$, in the unsigned version \eqref{eq:1.4} of the zero number $z$.
See lemma~\ref{lem:4.2} below for a more detailed statement, and \cite[appendix]{firo3d-2} for a more detailed review of  \cite{wo02}.
In the signed version \eqref{eq:1.4+} of the zero number, the same statement reads
	\begin{equation}
	z(v_+-u(t, \cdot )) = z(v_+-v_-) = z(u(t, \cdot )-v_-)\,.
	\label{eq:3.8}
	\end{equation}
For $v_-$:= $\mathcal{O}$ and $v_+$:= $v \in \Sigma_+^j \subseteq \partial W^{j+1}$, say, the right hand equality implies	
	\begin{equation}
	z(v-\mathcal{O})= z(u(t, \cdot ) -\mathcal{O}) \leq 
	\dim W^{j+1}-1 =j\,,
	\label{eq:3.9}
	\end{equation}
again by \cite{brfi86}.
We claim equality.
Suppose, indirectly, that the $t$-independent value $z(u(t, \cdot)-\mathcal{O})$ satisfies	
	\begin{equation}
	j':= z(v-\mathcal{O}) = z(u(t, \cdot )-\mathcal{O}) \leq j-1
	\label{eq:3.10}
	\end{equation}
for all $t$.
Then $u(t, \cdot ) -\mathcal{O}$ is backwards tangent to the Sturm-Liouville eigenfunction $\pm \varphi_{j'}$ of the linearization at $\mathcal{O}$, for $t \rightarrow -\infty$.
In particular $u(t, \cdot ) \in W^{j'+1}$ and 	
	\begin{equation}
	v= \lim\limits_{t \rightarrow +\infty} u(t, \cdot) 
	\in \partial W^{j'+1} = \Sigma^{j'} \subseteq \Sigma^{j-1}\,.
	\label{eq:3.11}
	\end{equation}
But this is excluded for $v \in \Sigma_\pm^j$, by the disjoint union \eqref{eq:3.2}.
This contradiction shows $j' = z(v-\mathcal{O})=j$.
The signed claim $z(v-\mathcal{O}) = j_\pm$ for $v \in \Sigma_\pm^j$ follows easily, because $t \mapsto z(u(t, \cdot )-\mathcal{O})$ cannot drop and hence has to preserve sign.
This proves claim (iii), and the proposition.
\end{proof}


\section{From signed 2-hemisphere templates to 3-cell templates}\label{sec4}

After the preparations on planar Sturm global attractors and on zero numbers in single closed cells, we can now embark on the first arrow \eqref{eq:1.12} in the cyclic template list
	\begin{equation}
	\begin{aligned}
	\text{signed 2-hemisphere template} \quad 
	&\Longrightarrow \quad \text{3-cell template}\,;\\
	\text{3-cell template} \quad 
	&\Longrightarrow \quad \text{3-meander template}\,;\\
	\text{3-meander template} \quad
	&\Longrightarrow \quad \text{signed 2-hemisphere template}\,;
	\end{aligned}
	\label{eq:4.1}
	\end{equation}
see definitions \ref{def:1.1}, \ref{def:1.2}, \ref{def:1.3}, and \eqref{eq:1.18}, \eqref{eq:1.27}.
Each arrow consists of a construction, which defines the map of the arrow, and a theorem, which establishes the defining properties of the target.
See definitions~\ref{def:1.1}--\ref{def:1.3}.

The map, for the first arrow, was specified in the translation table \eqref{eq:1.17}, as far as the hemisphere correspondence between the signed 2-hemisphere decomposition $\Sigma_\pm^j(\mathcal{O})$, $j=0,1,2$, of $\partial W^u(\mathcal{O})$ with the boundary decomposition
	\begin{equation}
	\partial c_{\mathcal{O}} =  \mathbf{N} \,\dot{\cup} \,
	\mathbf{S} \,\dot{\cup}\, \mathbf{EW}\, \dot{\cup}\, \mathbf{WE}\,
	 \dot{\cup}\,	\mathbf{W} \,\dot{\cup}\, \mathbf{E}
	\label{eq:4.2}
	\end{equation}
of the 3-cell $c_{\mathcal{O}} \in \mathcal{C}$ is concerned.
Here and below we omit braces of singleton sets. For example we write $\mathbf{N}$ for $\lbrace \mathbf{N} \rbrace$. We recall the orientation of 1-skeleton edges $c_v \in \mathcal{C}^1$, alias unstable manifolds $W^u(v)$ of $i=1$ saddles $v$, from equilibrium $\Sigma_-^0 (v)$ to $\Sigma_+^0(v)$.
The following theorem asserts that this passage from the dynamic Thom-Smale complex $\mathcal{A} = \cup_{v \in \mathcal{E}} W^u(v)$ of a 3-ball Sturm attractor $\mathcal{A}$ to the finite regular cell complex $\mathcal{C}$ with cells $c_v$:= $W^u(v)$ indeed satisfies the properties of a 3-cell template.

\begin{thm}\label{thm:4.1}
The dynamic Thom-Smale complex $\mathcal{C}$ of any 3-ball Sturm attractor $\mathcal{A}$ is a 3-cell template.
In particular, 

\begin{itemize}
\item[(i)] the above edge orientation of the 1-skeleton is bipolar from the pole $ \mathbf{N} = \Sigma_-^0 (\mathcal{O})$ to the pole $\mathbf{S} = \Sigma_+^0 (\mathcal{O})$;
\item[(ii)] the disjoint meridian paths $\mathbf{EW} = \Sigma_-^1(\mathcal{O})$ and $\mathbf{WE} = \Sigma _+^1(\mathcal{O})$ are directed from pole $\mathbf{N}$ to pole $\mathbf{S}$;
\item[(iii)] edges are oriented towards the meridians, in $\mathbf{W}$, and away from the meridians, in $\mathbf{E}$, with the necessary exceptions at the poles $\mathbf{N}$, $\mathbf{S}$;
\item[(iv)] $\mathbf{W}$-faces, adjacent to $\mathbf{N}$ and a first meridian edge, possess an edge overlap with the $\mathbf{E}$-face, adjacent to $\mathbf{S}$ and the last edge on that same meridian.
\end{itemize}
\end{thm}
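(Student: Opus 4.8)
The plan is to reduce each of the four assertions to statements about the signed zero numbers $z(v-\mathcal{O})$ and the pairwise numbers $z(v_1-v_2)$, exploiting Proposition~\ref{prop:3.1} to read off the hemisphere membership of every equilibrium. Two global facts drive everything. First, the dynamic complex is a finite regular cell complex with $\mathcal{A}=\mathrm{clos\,}W^u(\mathcal{O})=S^2\,\dot{\cup}\,c_{\mathcal{O}}$, by \cite{firo14} and the Schoenflies result \cite{firo13}; this is exactly condition~(i) of Definition~\ref{def:1.2}, noted to be immediate. Second, since $\mathcal{O}$ is the only equilibrium in the open cell $W^u(\mathcal{O})$, every other equilibrium $v$ lies in $\partial W^u(\mathcal{O})=\Sigma^2$, so Henry's boundary criterion gives $\mathcal{O}\leadsto v$ for all $v\neq\mathcal{O}$. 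In particular every such $v$ sits in a unique hemisphere $\Sigma_\pm^j(\mathcal{O})$ with determined signed number $z(v-\mathcal{O})=j_\pm$.

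First I would settle theorem claim~(i) up to uniqueness. Each edge $c_v=W^u(v)$ of a saddle runs from $\Sigma_-^0(v)$ to $\Sigma_+^0(v)$; as $z(\Sigma_\pm^0(v)-v)=0_\pm$ forces the pointwise inequalities $\Sigma_-^0(v)<v<\Sigma_+^0(v)$, the boundary value $v\mapsto v(0)$ strictly increases along every directed edge. Hence the orientation is acyclic and the $h_0^f$-order is a linear extension. Its global minimum $m$ satisfies $z(m-\mathcal{O})=0_-$ together with $\mathcal{O}\leadsto m$, so $m\in\Sigma_-^0(\mathcal{O})=\{\mathbf{N}\}$ by Proposition~\ref{prop:3.1}(iii); dually $\mathbf{S}$ is the global maximum. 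Thus $\mathbf{N}$ has no incoming and $\mathbf{S}$ no outgoing edge, and both lie on the boundary, leaving only the uniqueness of the orientation source and sink to be recovered later.

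Next I would treat claims~(ii) and~(iii) and then close~(i). For~(ii), I restrict to the closed $2$-cell $\mathrm{clos\,}W^2(\mathcal{O})$, a planar Sturm disk with source $\mathcal{O}$ and boundary circle $\Sigma^1(\mathcal{O})=\mathbf{N}\,\dot{\cup}\,\mathbf{S}\,\dot{\cup}\,\Sigma_-^1\,\dot{\cup}\,\Sigma_+^1$; the planar bipolar boundary result (in the spirit of \cite{firo09}) shows each meridian arc $\Sigma_\pm^1$ is a directed path from $\mathbf{N}$ to $\mathbf{S}$, and since the intrinsic orientation $\Sigma_-^0\to\Sigma_+^0$ is cell-independent, this is the global orientation demanded by~(ii). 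For~(iii), I view each closed hemisphere $\mathrm{clos\,}\mathbf{W}=\mathrm{clos\,}\Sigma_-^2$ and $\mathrm{clos\,}\mathbf{E}=\mathrm{clos\,}\Sigma_+^2$ as a planar Sturm topological disk, legitimate because Proposition~\ref{prop:3.1}(iv) caps all pairwise zero numbers inside a closed hemisphere at $1$, matching maximal Morse index $2$. For a meridian vertex $v$ bounding a hemisphere saddle $s$, I determine the edge orientation by the sign of $z(v-s)=0_\pm$: comparing $v(0)$ and $s(0)$ against $\mathcal{O}(0)$ settles it directly whenever the meridian and hemisphere signs disagree, and in the remaining case I would track the monotone, sign-preserving zero number along the orbit $s\leadsto v$ using the Wolfrum identity \eqref{eq:3.8}. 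The outcome is that edges point towards the meridian in $\mathbf{W}$ and away from it in $\mathbf{E}$, i.e. $\mathrm{clos\,}\mathbf{W}$ is Western and $\mathrm{clos\,}\mathbf{E}$ is Eastern in the sense of Definition~\ref{def:2.6}. Each hemisphere is then a bipolar disk with poles $\mathbf{N},\mathbf{S}$, so its interior sinks carry both incoming and outgoing edges, while meridian vertices already do by~(ii); this rules out any further orientation source or sink and completes claim~(i).

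The main obstacle is claim~(iv), the meridian overlap, precisely the condition that singles out Sturm $3$-balls among regular $3$-cell complexes. Here I would feed claim~(iii) into Lemma~\ref{lem:2.7}: the Eastern disk forces full $\mathbf{N}$-polar serpents and the Western disk forces full $\mathbf{S}$-polar serpents along the shared meridian. Geometrically this says the West face $\mathbf{NE}$ adjacent to $\mathbf{N}$ and to the first edge of $\mathbf{WE}$ sweeps an initial arc of that meridian, while the East face $\mathbf{SW}$ adjacent to $\mathbf{S}$ and to the last edge sweeps a terminal arc; since $\partial\mathbf{NE}$ and $\partial\mathbf{SW}$ meet $\mathbf{WE}$ in contiguous arcs whose fullness forces their union to cover the whole meridian, they must share at least one meridian edge. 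The delicate points I expect to fight with are the exact identification of which meridian sinks lie on $\partial\mathbf{NE}$ versus $\partial\mathbf{SW}$, decided by the heteroclinicity criterion \eqref{eq:3.7} together with the signed zero numbers, and the well-definedness and uniqueness of these distinguished faces, which is where the adjacency of the $h_\iota$-neighbours $w_\pm^\iota$ of $\mathcal{O}$ to the pole faces (corollary~\ref{cor:4.4}) enters.
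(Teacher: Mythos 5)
Your treatment of (ii) and (iii) via signed zero numbers and proposition~\ref{prop:3.1} follows the paper's route, and your acyclicity argument for (i) is also the paper's. But there are two gaps. The smaller one is in closing (i): you defer uniqueness of the orientation source and sink to the assertion that ``each hemisphere is then a bipolar disk,'' yet bipolarity of the hemisphere restriction does not follow from the Western/Eastern property of definition~\ref{def:2.6}, which only constrains edges meeting the boundary circle of the hemisphere; a vertex in the interior of $\mathbf{W}$ could a priori still be a second local extremum of the orientation. The paper excludes a second orientation source $\mathbf{N}'\neq\mathbf{N}$ directly: blocking of $\mathcal{O}\leadsto\mathbf{N}'$ by $\mathbf{N}$ forces $z(\mathbf{N}-\mathbf{N}')=0_-$, and the Matano--Pol\'a\v{c}ik result on ordered sinks in monotone systems then yields an $i=1$ saddle strictly between $\mathbf{N}$ and $\mathbf{N}'$ whose edge is oriented \emph{into} $\mathbf{N}'$, contradicting $\mathbf{N}'$ being an orientation source.

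The serious gap is in (iv). You correctly expect that $\partial\mathbf{NE}$ meets $\mathbf{WE}$ in a contiguous initial arc at $\mathbf{N}$ and $\partial\mathbf{SW}$ in a terminal arc at $\mathbf{S}$, but the step ``whose fullness forces their union to cover the whole meridian'' is precisely the assertion to be proved, and it does not follow from lemma~\ref{lem:2.7}: full polar serpents describe the restricted hemisphere Hamiltonian paths, not how far the two distinguished face boundaries reach along the meridian, and conditions (i)--(iii) alone do not imply (iv) for an abstract regular complex (nothing so far prevents $\partial\mathbf{NE}$ from stopping before $\partial\mathbf{SW}$ begins). The paper needs two inputs you omit. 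First, lemma~\ref{lem:4.3}, whose blocking and Wolfrum arguments (via \eqref{eq:4.9a}, \eqref{eq:4.9b} and lemma~\ref{lem:4.2}) identify $w_-^0$ as the barycenter of $\mathbf{NE}$ and prove in part (v) that $\mathcal{E}_+^1(w_-^0)=\lbrace v:\mathbf{N}(1)<v(1)<w_-^0(1)\rbrace$, so that $\partial\mathbf{NE}$ reaches along $\mathbf{WE}$ exactly to the $h_1$-predecessor $\Tilde v_+$ of $w_-^0$ (and, dually, $\partial\mathbf{SW}$ reaches back to the $h_0$-successor $\Tilde v_-$ of $w_+^1$). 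Second, the Jordan-curve property of the shooting meander: the oriented $h_0$-arcs $w_+^1\Tilde v_-$ and $w_-^0\mathcal{O}$ cannot cross, their Morse numbers force nesting, and hence $\Tilde v_-(1)\leq\Tilde v_+(1)$, i.e.\ the terminal arc begins no later than the initial arc ends. Without this meander-nesting step the overlap is not established.
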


\begin{proof}[\textbf{Proof of Theorem~\ref{thm:4.1}(i)--(iii)}]
To prove (i), we first note that any directed path from equilibrium vertex $v_1$ to $v_2 \neq v_1$ in the 1-skeleton $\mathcal{C}^1$ of $\mathcal{A}$ implies
	\begin{equation}
	v_1(x) < v_2(x)\,,
	\label{eq:4.3}
	\end{equation}
for all $0 \leq x \leq 1$.
Therefore the orientation of the 1-skeleton is acyclic.
In particular there exists at least one local ``source'' $\mathbf{N}'$ of the orientation, and at least one local orientation ``sink'' $\mathbf{S}'$.
It is sufficient to show $\mathbf{N}' = \mathbf{N}$, and hence uniqueness of $\mathbf{N}'$; the arguments for $\mathbf{S}' = \mathbf{S}$ are analogous.

Suppose, indirectly, $\mathbf{N}' \neq \mathbf{N}$.
Then $z(\mathbf{N} -\mathcal{O})= 0_-$ blocks the heteroclinic orbit $\mathcal{O} \leadsto \mathbf{N}'$, unless $z(\mathbf{N}-\mathbf{N}') = 0_-$.
In particular $\mathbf{N}$ and $\mathbf{N}'$ are $i=0$ sink equilibria such that $\mathbf{N} < \mathbf{N}'$.
For monotone dynamical systems with hyperbolic equilibria it has been proved that $\mathbf{N}$ and $\mathbf{N}'$ are then separated by one or several $i=1$ saddle equilibria $v$ which are strictly ordered, by $z=0$, and are strictly between $\mathbf{N}$ and $\mathbf{N}'$.
See \cite{ma79} and \cite{po16}.
Let $v$ denote the largest of these saddles.
Then $v \leadsto \mathbf{N}'$ and $v < \mathbf{N}'$ implies $\mathbf{N}' = \Sigma_+^0(v)$, and the edge $W^u(v)$ is oriented towards the orientation ``source'' $\mathbf{N}'$.
This contradiction proves $\mathbf{N}' = \mathbf{N}$, and claim (i) is settled.

To prove claim (ii), we first observe that the meridians $\Sigma_\pm^1 (\mathcal{O})$ are disjoint, by definition.  
It is therefore sufficient to consider the meridian $\mathbf{WE} = \Sigma_+^1(\mathcal{O})$, without loss of generality.
Note that $z(v_1-v_2) =0$ for any two distinct equilibria $v_1,v_2 \in \mathrm{clos\,} \Sigma_+^1(\mathcal{O}) = \mathbf{WE}\, \cup \, \lbrace \mathbf{N}, \mathbf{S} \rbrace$;
see proposition~\ref{prop:3.1}(iv).
Therefore the equilibria on $\mathrm{clos\,} \mathbf{WE}$ are totally ordered, including the saddles and their unstable manifolds, by the bipolar orientation of the 1-skeleton.

To prove claim (iii), consider $\mathbf{W} = \Sigma_-^2(\mathcal{O})$ without loss of generality.
Else consider $-u(t,x)$ to reverse orientations, and interchange poles $\Sigma_\pm^0(\mathcal{O})$ and hemispheres $\Sigma_\pm^2(\mathcal{O})$.
Let $v \in \mathbf{W} = \Sigma_-^2(\mathcal{O})$ be any $i=1$ saddle with heteroclinic orbit
	\begin{equation}
	v \leadsto \Tilde{v} \in \Sigma_\pm^1(\mathcal{O})
	\label{eq:4.4}
	\end{equation}
to an $i=0$ sink $\Tilde{v} \neq \mathbf{N}, \mathbf{S}$ in a meridian.
Without loss of generality, after passing to $u(t, 1-x)$ if necessary, assume $\Tilde{v} \in \Sigma_+^1(\mathcal{O})= 
\mathbf{WE}$.
Then $v\in \Sigma_-^2(\mathcal{O})$, $\Tilde{v} \in \Sigma_+^1 (\mathcal{O})$ imply
	\begin{equation}
	v(0) < \mathcal{O}(0) < \Tilde{v}(0)\,.
	\label{eq:4.5}
	\end{equation}
But $z(\Tilde{v}-v) = 0_\pm$, by $\Tilde{v} \in \Sigma_\pm^0(v)$ in \eqref{eq:4.4}.
Therefore \eqref{eq:4.5} implies $z(\Tilde{v}-v)= 0_+$, i.e., the edge $c_v$ of the saddle $v$ in $\mathbf{W}$ is oriented towards the meridian at $\Tilde{v} \in \Sigma_+^1(\mathcal{O})$.
This proves claim (iii).
\end{proof}

Our proof of the overlap property of theorem~\ref{thm:4.1}(iv) is based on
lemma~\ref{lem:4.3} below.
We precisely identify the face barycenters of (iv) to coincide with the neighbors $w_\pm^\iota $ of $\mathcal{O}$ in the orders $h_\iota$ of the equilibrium set $\mathcal{E}$ at $x=\iota$; see \eqref{eq:1.19a} and \eqref{eq:4.24}.
We partially rely on the Wolfrum lemma~\ref{lem:4.2} which characterizes heteroclinicity $v_- \leadsto v_+$ in a more elegant way than \cite{firo96}; see also \cite[appendix]{firo3d-2}.

\begin{figure}[t]
\centering \includegraphics[width=0.6\textwidth]{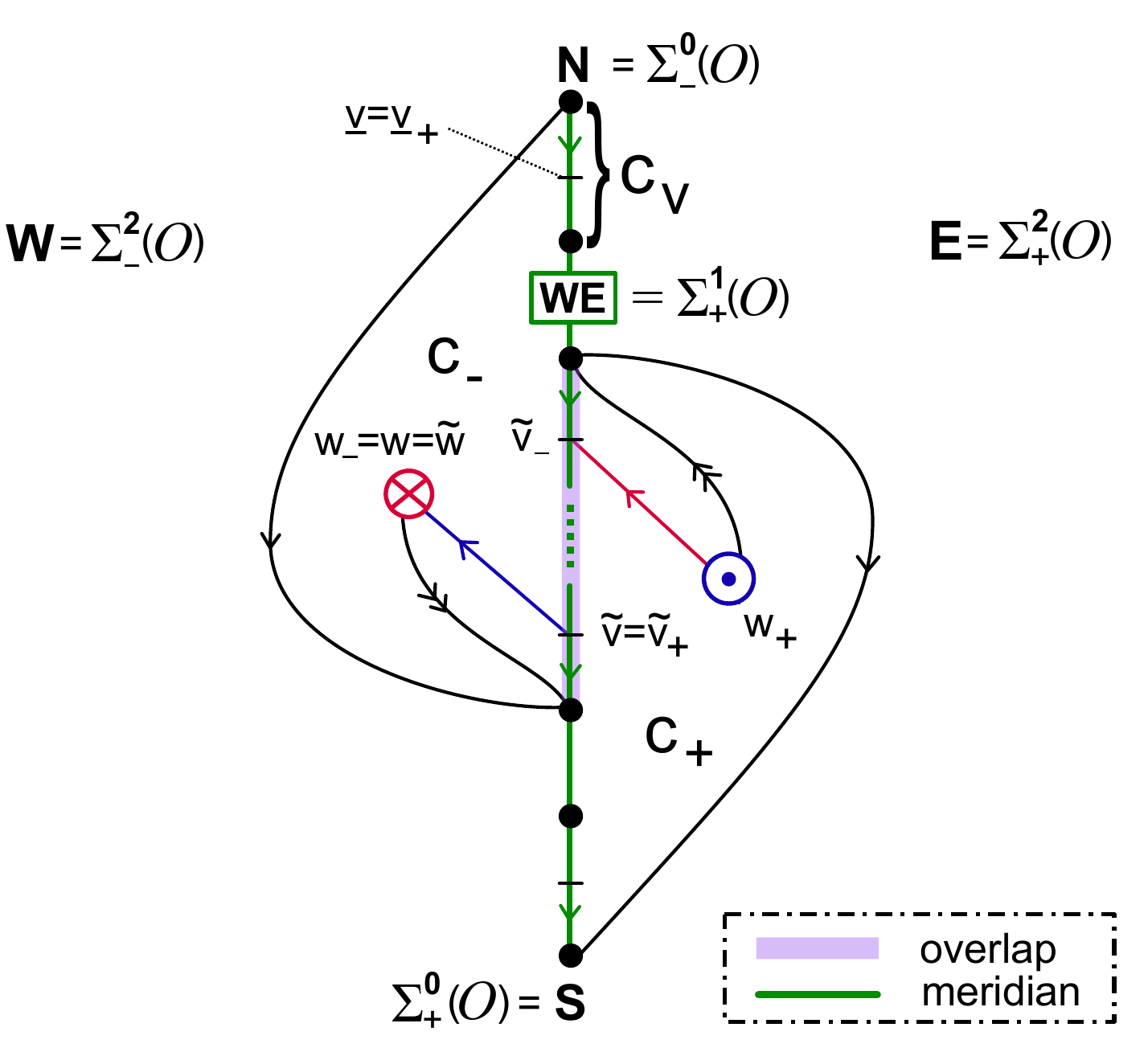}
\caption{\emph{
The overlap construction.
In the notation of \eqref{eq:4.10}, the left face $c_-=c_w \subseteq \Sigma_-^2 (\mathcal{O}) = \mathbf{W}$ is assumed to contain the first edge $c_{\underline{v}}$ on the $\mathbf{WE}$ meridian $\Sigma_+^1(\mathcal{O})$ from $\mathbf{N}$ to $\mathbf{S}$, in its boundary $\partial c_-$.
Next abbreviate the sources $w_-^0=w$ by $w_-$ and $w_+^1$ by $w_+$, with 2-cell faces $c_\pm$.
Then the boundary saddles $\Tilde{v}_\pm$ satisfy $\Tilde{v}_\pm \in \partial c_\mp\,\cap \, \Sigma_+^1(\mathcal{O})$ and $\Tilde{v}_+ \geq \Tilde{v}_-$.
In other words, $\Tilde{v}_-$ (nonstrictly) precedes $\Tilde{v}_+$ in the directed meridian $\Sigma_+^1 (\mathcal{O})$ from $\mathbf{N}$ to $\mathbf{S}$.
Because the meridian paths from $\mathbf{N}$ to the edge of $\Tilde{v}_+$, and from the edge of $\Tilde{v}_-$ to $\mathbf{S}$, are contained in the respective boundaries $\partial c_{\pm}$, entirely, this shows boundary overlap of the cells $c_\pm$ along at least one edge of the $\mathbf{WE}$ meridian.
}}
\label{fig:4.1}
\end{figure}

\begin{lem}[\textbf{[Wo02]}]\label{lem:4.2}
Let $\mathcal{A}$ be a general Sturm global attractor with all equilibria being hyperbolic.
Let $v_\pm \in \mathcal{E}$.
Then $v_- \leadsto v_+$ if, and only if, $i(v_-) > i(v_+)$ and $v_\pm$ are $z(v_+ -v_-)$-adjacent.
\end{lem}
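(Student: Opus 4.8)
Since Lemma~\ref{lem:4.2} is Wolfrum's theorem from \cite{wo02}, the plan is to derive it from the zero-number characterization of heteroclinicity already recalled in \eqref{eq:3.7}--\eqref{eq:3.8}, which is the analytic heart of \cite{wo02}. Recall that \eqref{eq:3.7} states $v_- \leadsto v_+$ precisely when some connecting orbit $u(t,\cdot)$ carries a \emph{frozen} zero number $z(u(t,\cdot)-v_\pm) = k$ for all $t \in \mathbb{R}$, where $k := z(v_+-v_-)$. The task is then to show that this analytic condition is equivalent to the two combinatorial clauses $i(v_-) > i(v_+)$ and $k$-adjacency of $v_\pm$, where $k$-adjacency means that no equilibrium $w$ interposed between $v_-$ and $v_+$ in the boundary order satisfies $z(w-v_-) = z(v_+-w) = k$.

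For the forward implication I would first obtain $i(v_-) > i(v_+)$ directly from the Morse--Smale transversality $W^u(v_-)\transv W^s(v_+)$: a nontrivial connecting orbit lives in an intersection of dimension $i(v_-)-i(v_+)$, which must therefore be at least $1$. For $k$-adjacency I would argue by contradiction. Suppose a blocking equilibrium $w$ with $z(w-v_-)=z(v_+-w)=k$ sits strictly between $v_-$ and $v_+$. Applying the nonincrease and strict-drop properties of the zero number \eqref{eq:1.4} to the differences $u(t,\cdot)-w$, and comparing with the frozen value $k$ from \eqref{eq:3.7}, one sees that the profile $u(t,\cdot)$ cannot cross $w$ as $t$ runs from $-\infty$ to $+\infty$; this traps $u(t,\cdot)$ on one side of $w$ and prevents convergence to $v_+$, contradicting $v_- \leadsto v_+$. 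This is the classical Sturmian blocking argument.

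The converse is the substantial direction and the place where I expect the main obstacle, since here the connecting orbit must be \emph{produced} from purely combinatorial hypotheses. I would organize the reachable targets of $v_-$ by their signed zero numbers via proposition~\ref{prop:3.1}(iii): the boundary sphere $\partial W^u(v_-)$ decomposes into the hemispheres $\Sigma_\pm^j(v_-)$, and a target $v_+$ with $z(v_+-v_-)=k$ lies, with the appropriate sign, in one of the closed hemispheres $\mathrm{clos\,}\Sigma_\pm^{k}(v_-)$. The index inequality makes the fast unstable manifold $W^{k+1}(v_-)$ available, and $k$-adjacency says precisely that $v_+$ is extremal among the $z=k$ targets, with no blocking equilibrium removing it from the boundary of the flow-invariant frozen-zero-number set. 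The existence of an orbit terminating at $v_+$ then follows from Wolfrum's Conley-index/degree argument on $W^{k+1}(v_-)$, which is the deep and nonroutine step; the forward direction and the reduction to \eqref{eq:3.7} are comparatively mechanical once the adjacency notion is matched against the zero-number bookkeeping of section~\ref{sec3}. In practice I would simply invoke \cite{wo02}, with the detailed review in \cite[appendix]{firo3d-2}.
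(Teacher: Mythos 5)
The paper offers no proof of lemma~\ref{lem:4.2} at all: it is imported verbatim from \cite{wo02} (with a detailed review deferred to \cite[appendix]{firo3d-2}), so your decision to ultimately invoke \cite{wo02} for the substantial sufficiency direction is exactly what the paper does, and your added sketch of the necessity direction (Morse--Smale transversality for $i(v_-)>i(v_+)$, Sturmian blocking for $k$-adjacency) is sound and consistent with the paper's own discussion around \eqref{eq:4.9a}--\eqref{eq:4.9b}. The only small imprecision is in which quantity is frozen in the blocking step: it is $z(u(t,\cdot)-w)$ that is squeezed between the equal limits $z(v_--w)=z(v_+-w)=k$ and hence can never drop, so $u(t,\cdot)-w$ never acquires the multiple zero at the Neumann boundary that a sign change of $u(t,0)-w(0)$ would force --- not the frozen value $z(u(t,\cdot)-v_\pm)$ of \eqref{eq:3.7}.
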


Here the equilibria $v_\pm$ are called $k$-\emph{adjacent}, if there does not exist a third equilibrium $v$ strictly between $v_+$ and $v_-$, at $x=0$ or equivalently at $x=1$, such that
	\begin{equation}
	z(v_\pm -v) =k\,.
	\label{eq:4.9a}
	\end{equation}
In other words, the signed zero numbers of $v_\pm -v$ are $k$, of opposite sign index.
An equilibrium $v$, as above, which prevents $k$-adjacency of $v_\pm$ \emph{blocks} $v_- \leadsto v_+$.
For blocking, $k=z(v_+-v_-)$ is not required.
The existence of an equilibrium $v$ such that
	\begin{equation}
	z(v_+-v) > z(v_- -v)\,,
	\label{eq:4.9b}
	\end{equation}
also blocks $v_- \leadsto v_+$, of course, simply because $t \mapsto z(u(t, \cdot )-v)$ is nonincreasing, due to zero number dropping \eqref{eq:1.4}.

For example, blocking is impossible if $v_\pm$ are $h_0$- or $h_1$-neighboring equilibria, i.e. are neighbors at $x=0$ or $x=1$.
By adjacency of their Morse indices \eqref{eq:1.22a}, \eqref{eq:1.26}, \eqref{eq:1.22b}, this implies the existence of a heteroclinic orbit between $v_+$ and $v_-$, running from the higher to the lower Morse index.
In the particular case $i(v_-) = i(v_+)+1$ of adjacent Morse indices, lemma~\ref{lem:4.2} has already been obtained in \cite[lemma~1.7]{firo96}.

To prepare our proof of theorem~\ref{thm:4.1}(iv) we introduce the following eight notations for specific equilibria. We will show some of them in fact coincide.
Let $\mathcal{O}$ denote the equilibrium of Morse index $i=3$.
Our notation is based on the equilibrium orders $h_\iota$ of $\mathcal{E}$ at $x=\iota$; see~\eqref{eq:1.19a} and \eqref{eq:1.24a}:
	\begin{equation}
	\begin{aligned}
	 w  :\quad  & \text{the } 
	h_0\text{-last equilibrium } h_0\text{-before }	\mathcal{O}\,; \\
	 \Tilde{w} :\quad & \text{the } 
	h_1\text{-first source } h_1\text{-before }	\mathcal{O}\,;\\
	 \Tilde{v}  :\quad & \text{the } 
	h_1\text{-last equilibrium } h_1\text{-before }	\Tilde{w}\,;\\
	 \Tilde{v}_+  :\quad & \text{the } 
	h_1\text{-last saddle in }\mathcal{E}_+^1(\mathcal{O}),\  
	h_1\text{-before }	\Tilde{w}\,;\\
	 v_-  : \quad &  
	\text{the North pole } \Sigma_-^0(w) \text{ of the face of } w\,;\\
	 \underline{v}  : \quad & \text{the } 
	h_1\text{-first equilibrium } h_1\text{-after }	\mathbf{N}\,;\\
	 \underline{v}_+  :\quad & \text{the } 
	h_1\text{-first saddle in }\mathcal{E}_+^1(\mathcal{O}),\ 
	h_1\text{-after } \mathbf{N}\,.
	\end{aligned}
	\label{eq:4.10}
	\end{equation}

We recall the notation $\mathcal{E}_\pm^j(\mathcal{O}) = \Sigma_\pm^j(\mathcal{O}) \cap \mathcal{E}$ for hemisphere equilibria; see~\eqref{eq:1.9g}.
Existence of all equilibria, except $\Tilde{v}_+$, follows from adjacency~\eqref{eq:1.22a}, \eqref{eq:1.26} of Morse numbers, alias Morse indices~\eqref{eq:1.22b}, for $h_\iota$-adjacent equilibria.
Equilibrium $w$ is an $i=2$ source, automatically, by $i(\mathcal{O}) =3$.
Note how $w= w_-^0$, from \eqref{eq:1.24a}, has just been stripped of its decorative sub- and superscript.
Similarly $\underline{v}$ and $\Tilde{v}$ are $i=1$ saddles.
In particular lemma~\ref{lem:4.3}(i), below, which proves $\underline{v} = \underline{v}_+$, shows that $\underline{v} \in \mathcal{E}_+^1(\mathcal{O})$ occurs $h_1$-before $\Tilde{w}$ and hence implies the existence of $\Tilde{v}_+$.

By $h_\iota$-adjacency, the Wolfrum lemma~\ref{lem:4.2} immediately implies the following heteroclinic orbits in~\eqref{eq:4.10}:
	\begin{equation}
	\begin{aligned}
	\mathcal{O} \quad &\leadsto \quad w\,;\\
	\Tilde{w} \quad &\leadsto \quad \Tilde{v}\,;\\
	\underline{v} \quad &\leadsto \quad \mathbf{N}\,.
	\end{aligned}
	\label{eq:4.11}
	\end{equation}
	In particular $w \in \mathcal{E}_-^2(\mathcal{O})$.
By definition of $\Sigma_-^0(w)$ 
we also have a monotonically decreasing heteroclinic orbit
	\begin{equation}
	w \quad \leadsto\quad  v_- \quad < \quad w
	\label{eq:4.12}
	\end{equation}
along the fast unstable manifold of $w$.
See fig.~\ref{fig:4.1} for a partial notational illustration of the case $w=w_-^0$ and the closely related antipodal case $w=w_+^1$, which define candidates for a boundary overlap.
Also recall fig.~\ref{fig:3.1} for an illustration of a ``spaghetti template'', and the specific case of a solid octahedron, fig.~\ref{fig:6.5}.
Although these figures much inspire and illustrate the proofs, below, they will not be used in any technical sense.

\begin{lem}\label{lem:4.3}
In the above setting and notation~\eqref{eq:4.10} the equilibrium $\Tilde{v}_+$ exists.
Moreover
\begin{itemize}
\item[(i)] $\quad\underline{v} \phantom{(w_-)} \quad= \quad \underline{v}_+ \in \mathcal{E}_+^1 (\mathcal{O})$;
\item[(ii)] $\quad\Tilde{v}  \phantom{(w_-)} \quad= \quad \Tilde{v}_+ \in \mathcal{E}_+^1 (\mathcal{O})$;
\item[(iii)] $\quad w  \phantom{(v_-)} \quad = \quad \Tilde{w} \in \mathcal{E}_-^2(\mathcal{O})$;
\item[(iv)] $\quad v_- \phantom{(w)} \quad =  \quad \mathbf{N}$;
\item[(v)] $\quad \mathcal{E}_+^1 (w) \quad = \quad \lbrace v \in \mathcal{E}\; |\; \mathbf{N} (1) < v (1) < w(1) \rbrace \quad \subseteq \quad \mathcal{E}_+^1 (\mathcal{O})$.
\end{itemize}
In particular the face $c_w$ of the $h_0$-last equilibrium $w$ before $\mathcal{O}$ is the unique 2-cell in $\Sigma_-^2(\mathcal{O})$ which is adjacent to the first edge of the meridian $\Sigma_+^1(\mathcal{O})$ at $\mathbf{N}$.
\end{lem}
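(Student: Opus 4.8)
My plan rests on turning the signed zero number into two explicit boundary-order tables. By proposition~\ref{prop:3.1}(iii), $v\in\Sigma_\pm^j(\mathcal{O})$ means $z(v-\mathcal{O})=j_\pm$, where the index records the sign of $(v-\mathcal{O})(0)$ and the parity of $j$ decides whether that sign is kept ($j$ even) or reversed ($j$ odd) at $x=1$. Reading this off, the order at $x=0$ is $\mathbf{N}<(\mathbf{EW}\cup\mathbf{W})<\mathcal{O}<(\mathbf{WE}\cup\mathbf{E})<\mathbf{S}$ and the order at $x=1$ is $\mathbf{N}<(\mathbf{WE}\cup\mathbf{W})<\mathcal{O}<(\mathbf{EW}\cup\mathbf{E})<\mathbf{S}$; here $\mathbf{N}=h_0(1)=h_1(1)$ and $\mathbf{S}=h_0(N)=h_1(N)$ are the global extrema, since by theorem~\ref{thm:4.1}(i) they are the unique bipolar source and sink while the dissipative end sinks are forced to be bipolar extrema. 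I record two immediate consequences: every equilibrium below $\mathcal{O}$ at $x=1$ lies in $\mathbf{W}\cup\mathbf{WE}\subseteq\mathrm{clos}\,\Sigma_-^2(\mathcal{O})$, and each such equilibrium satisfies $z(\cdot-\mathbf{N})=0$ by proposition~\ref{prop:3.1}(iv) and minimality of $\mathbf{N}$.

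Next I would locate $w$. By \eqref{eq:4.11} it is a source in $\mathcal{E}_-^2(\mathcal{O})$, so $z(w-\mathcal{O})=2_-$ is even and hence $w(0)<\mathcal{O}(0)$ and $w(1)<\mathcal{O}(1)$. The key local fact is that $w$ is the $h_0$-last equilibrium before $\mathcal{O}$, so nothing lies strictly between $w$ and $\mathcal{O}$ at $x=0$. From this I prove $\mathcal{E}_+^1(w)\subseteq\mathbf{WE}=\mathcal{E}_+^1(\mathcal{O})$: any $v\in\mathcal{E}_+^1(w)$ has $z(v-w)=1_+$, thus $v(0)>w(0)$ and $v(1)<w(1)<\mathcal{O}(1)$, so $v\in\mathbf{W}\cup\mathbf{WE}$; the alternative $v\in\mathbf{W}$ would give $w(0)<v(0)<\mathcal{O}(0)$, contradicting $h_0$-adjacency, so $v\in\mathbf{WE}$. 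This already establishes the inclusion into $\mathcal{E}_+^1(\mathcal{O})$ in part~(v).

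Part~(iv) is the heart of the matter and the step I expect to resist. The two poles $v_\pm=\Sigma_\pm^0(w)$ split $\partial c_w$ into the arcs $\Sigma_\pm^1(w)$, and the ``$+$''-arc contains at least one saddle; by the previous step all of its cells lie on $\mathbf{WE}$, so on passing to closures both poles lie in $\mathrm{clos}\,\mathbf{WE}=\mathbf{WE}\cup\{\mathbf{N},\mathbf{S}\}$. But $z(v_--w)=0_-$ forces $v_-(0)<w(0)<\mathcal{O}(0)$, whereas every point of $\mathbf{WE}$ and $\mathbf{S}$ lies above $\mathcal{O}$ at $x=0$; hence $v_-=\mathbf{N}$, which is part~(iv). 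In particular $\Sigma_-^0(w)=\{\mathbf{N}\}$ yields $w\leadsto\mathbf{N}$, and by the Wolfrum lemma~\ref{lem:4.2} the pair $w,\mathbf{N}$ is $0$-adjacent.

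The rest cascades. For part~(v) I must still show each $v$ with $\mathbf{N}(1)<v(1)<w(1)$ lies in $\mathcal{E}_+^1(w)$: such $v$ is below $\mathcal{O}$ at $x=1$, hence in $\mathbf{W}\cup\mathbf{WE}$; if $v\in\mathbf{W}$ then $v(0)<w(0)$, so $v$ is monotone below $w$ with $z(\mathbf{N}-v)=z(v-w)=0$, a blocker contradicting the $0$-adjacency of $w,\mathbf{N}$, and therefore $v\in\mathbf{WE}$ with $z(v-w)=1_+$, i.e. $v\in\mathcal{E}_+^1(w)$. Part~(iii) follows because this arc consists of $i\le1$ cells, so no source lies strictly between $\mathbf{N}$ and $w$ at $x=1$ and $w$ is the $h_1$-minimal source $\tilde w$. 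Then $\underline v=h_1(2)$, the smallest equilibrium above $\mathbf{N}$, is the bottom saddle of the $\mathbf{WE}$-arc, giving $\underline v=\underline v_+$ in~(i), and $\tilde v$, the $h_1$-neighbour below $\tilde w=w$, is its top saddle, giving $\tilde v=\tilde v_+$ and the existence of $\tilde v_+$ in~(ii). Finally the first meridian edge $c_{\underline v}$ at $\mathbf{N}$ is the initial edge of $\Sigma_+^1(w)\subseteq\partial c_w$, and since a boundary edge of the West disk borders a single face, $c_w$ is the unique such $2$-cell.
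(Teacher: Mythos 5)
Your route is genuinely different from the paper's, and for the most part it works. The paper proves (i)--(v) in that order, each indirectly, by exhibiting blocking equilibria and invoking the Wolfrum lemma~\ref{lem:4.2} at every stage (including an auxiliary source $\Tilde{w}_+$ for (ii) and an auxiliary equilibrium $\hat{v}$ for (iv)). You instead establish the inclusion $\mathcal{E}_+^1(w)\subseteq\mathcal{E}_+^1(\mathcal{O})$ first, directly from the $h_0$-adjacency of $w$ and $\mathcal{O}$ together with the sign tables of proposition~\ref{prop:3.1}(iii), and then obtain (iv) from the closure argument $\Sigma_-^0(w)\in\mathrm{clos\,}\Sigma_+^1(w)\subseteq\mathrm{clos\,}\Sigma_+^1(\mathcal{O})=\mathbf{WE}\cup\lbrace\mathbf{N},\mathbf{S}\rbrace$, which pins $v_-$ down to $\mathbf{N}$ by its position at $x=0$. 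That is shorter and cleaner than the paper's case analysis for (iv), and (iii), (i), (ii) then do cascade as you describe. Two facts you use tacitly should be stated: the open arc $\Sigma_+^1(w)$ is nonempty (so its closure really contains both poles of $\partial c_w$), and a saddle lying on the meridian carries its entire edge $W^u$ on that meridian, which is what upgrades $\mathcal{E}_+^1(w)\subseteq\mathcal{E}_+^1(\mathcal{O})$ to the set inclusion $\Sigma_+^1(w)\subseteq\Sigma_+^1(\mathcal{O})$ needed before taking closures. Both are standard and are used by the paper as well.

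The one genuine gap is in the converse inclusion of (v). From $v\in\mathbf{WE}$ and $z(v-w)=1_+$ you conclude ``i.e.\ $v\in\mathcal{E}_+^1(w)$''. That is not the definition: by \eqref{eq:1.9g} and \eqref{eq:1.9h}, membership in $\mathcal{E}_+^1(w)$ requires the heteroclinic connection $w\leadsto v$ in addition to the signed zero number. The repair is exactly the paper's closing step: all candidates $v$ with $\mathbf{N}(1)<v(1)<w(1)$ lie on the meridian and are mutually ordered by $z=0$, so any equilibrium $u$ strictly between $v$ and $w$ at $x=1$ is itself such a candidate and satisfies $z(v-u)=0\neq 1$; hence $v$ and $w$ are $1$-adjacent, and lemma~\ref{lem:4.2} yields $w\leadsto v$. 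With that sentence added, your argument is complete.
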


\begin{proof}[\textbf{Proof.}]
Existence of $\Tilde{v}_+$ follows from claim~(i).
Claims~(i), (iv) imply $\underline{v} = \underline{v}_+ \in \mathcal{E}_+^1(w)$ and hence the first edge $c_{\underline{v}} = c_{\underline{v}_+}$ of the meridian $\mathcal{E}_+^1 (\mathcal{O})$, with one end point at $\mathbf{N}$, is contained in the boundary $\partial {c_w}$ of the face $c_w$ of $w$.
It only remains to show claims~(i)--(v), successively.
Throughout we normalize $\mathcal{O} \equiv 0$.

To show claim~(i), indirectly, suppose $\underline{v}_+ \neq \underline{v}$.
Then definition~\eqref{eq:4.10} of $\underline{v}$ implies $\underline{v} \not\in \mathcal{E}_+^1 (\mathcal{O}) = \mathcal{E} \cap \Sigma_+^1(\mathcal{O})$, and hence $\underline{v} (0) <0$, $\underline{v}(1)<0$.
Since $\underline{v} \neq \mathbf{N}= \mathcal{E}_-^0(\mathcal{O})$ this implies $\underline{v} \in \mathcal{E}_-^2(\mathcal{O})$.
Because $\underline{v}$ was already identified as a saddle, definition \eqref{eq:4.10} of $\underline{v}_+$ and $\mathcal{O} \equiv 0$ further imply
	\begin{equation}
	\underline{v}(0) < 0< \underline{v}_+ (0) \quad \text{and} \quad
	\underline{v} (1) < \underline{v}_+(0)\,.
	\label{eq:4.13}
	\end{equation}
Since $\underline{v}, \underline{v}_+ \in \mathrm{clos\,} \Sigma_-^2(\mathcal{O})$, proposition~\ref{prop:3.1}(iv) therefore implies $z(\underline{v}_+ - \underline{v}) \leq 1$ and, with~\eqref{eq:4.13},
$z(\underline{v}_+ -\underline{v}) = 0_+$.
In other words
	\begin{equation}
	\underline{v} <  \underline{v}_+
	\label{eq:4.14}
	\end{equation}
is strictly monotonically ordered.
On the other hand, dissipativeness \eqref{eq:1.20} of the Sturm meander $\mathcal{M}$ and of the Sturm PDE \eqref{eq:1.1} imply
	\begin{equation}
	\mathbf{N}  <  v  < \mathbf{S}
	\label{eq:4.15}
	\end{equation}
for all non-pole equilibria $v \in \mathcal{E}$.
In particular
	\begin{equation}
	\underline{v}  >  \mathbf{N}\,.
	\label{eq:4.16}
	\end{equation}
By \eqref{eq:4.14}, \eqref{eq:4.16} the equilibrium $\underline{v}$ blocks $\underline{v}_+ \leadsto \mathbf{N}$, in the sense of \eqref{eq:4.9a}.

On the other hand, all equilibria on the meridian $\mathrm{clos\,} \Sigma_+^1(\mathcal{O})$ are ordered strictly monotonically by $z=0$; see proposition~\ref{prop:3.1}(iv).
With the saddle $\underline{v}_+$, the meridian $\Sigma_+^1(\mathcal{O})$ also contains the edge $W^u(\underline{v}_+)$ which defines the first edge of that meridian, adjacent to $\mathbf{N}$.
Therefore $\underline{v}_+ \leadsto \mathbf{N}$.
This contradicts the above blocking of $\underline{v}_+ \leadsto \mathbf{N}$ by $ \underline{v}$, and proves claim (i), as well as existence of $\Tilde{v}_+$.

We prove claim~(ii) next, indirectly.
Suppose $\Tilde{v} \neq \Tilde{v}_+$.
In \eqref{eq:4.11} we have already observed $\Tilde{w} \leadsto \Tilde{v}$ connects heteroclinically to the saddle $\Tilde{v} \not\in \mathcal{E}_+^1(\mathcal{O})$.
In particular $z(\Tilde{v}-\mathcal{O}) \in \lbrace 0,2 \rbrace$.
Furthermore $\Tilde{v} \neq \mathbf{N}= \mathcal{E}_-^0(\mathcal{O})$, because $\Tilde{v}$ is a saddle and $\mathbf{N}$ is a sink.
Therefore $z(\Tilde{v}-\mathcal{O})=2$, and $\Tilde{v}(1) < \Tilde{w}(1)<0$ implies $\Tilde{v} \in \mathcal{E}_-^2(\mathcal{O})$.

Let $\Tilde{w}_+$ denote the source in $\mathcal{E}_-^2(\mathcal{O})$ such that $\Tilde{w}_+ \leadsto \Tilde{v}_+ \in \mathcal{E}_+^1 (\mathcal{O})$.
In other words, $\Tilde{w}_+$ is the source of the face $c$ in $\Sigma_-^2(\mathcal{O})$ adjacent to the meridian edge $c_{\Tilde{v}_+} \subseteq \partial c \cap \Sigma_+^1(\mathcal{O})$.
Then proposition~\ref{prop:3.1}(iv) for $\Tilde{w}_+ , \Tilde{v} \in \mathcal{E}_-^2(\mathcal{O})$, and the definition \eqref{eq:4.10} of $\Tilde{v}, \Tilde{w}$ imply $\Tilde{w}_+(1) \geq \Tilde{w}(1) > \Tilde{v}(1)$ and $z(\Tilde{w}_+-\Tilde{v}) \leq 1$.
Hence
	\begin{equation}
	z(\Tilde{w}_+ - \Tilde{v}) \in \lbrace 0,1_-\rbrace\,.
	\label{eq:4.17}
	\end{equation}
On the other hand $\Tilde{v} \in \mathcal{E}_-^2 (\mathcal{O})$, $\Tilde{v}_+ \in \mathcal{E}_+^1(\mathcal{O})$ and $\Tilde{v}(1)> \Tilde{v}_+(1)$, by definition~\eqref{eq:4.10}.
In particular $\Tilde{v}_+(0) > 0 > \Tilde{v}(0)$, and proposition~\ref{prop:3.1}(iv) implies
	\begin{equation}
	z(\Tilde{v}_+-\Tilde{v}) = 1_+\,.
	\label{eq:4.18}
	\end{equation} 
Therefore $\Tilde{v}$ blocks $\Tilde{w}_+ \leadsto \Tilde{v}_+$, by \eqref{eq:4.17} and \eqref{eq:4.18}, in the sense of \eqref{eq:4.9b}.
This  contradicts the definition of $\Tilde{w}_+$; and proves claim (ii).

To prove claim (iii), indirectly, suppose $w \neq \Tilde{w}$.
Sources, like $w$ and $\Tilde{w}$, only reside in $\mathcal{E}_\pm^2(\mathcal{O})$.
Since $w(0)<0$, $\Tilde{w}(1)<0$, we have $w, \Tilde{w} \in \mathcal{E}_-^2 (\mathcal{O})$.
Moreover $w(0) > \Tilde{w}(0)$, by definition of $w$, and $w(1) >\Tilde{w}(1)$, by definition of $\Tilde{w}$.
Thus proposition~\ref{prop:3.1}(iv) implies $\Tilde{w} < w \in \mathcal{E}_-^2 (\mathcal{O})$, for all (omitted) arguments $0 \leq x \leq 1$.
Define $v$ as the South pole of the face of $\Tilde{w}$, i.e. $ v  $:= $ \Sigma_+^0(\Tilde{w})$.
By $z$-dropping, the monotonically increasing fast unstable heteroclinic manifold from $\Tilde{w}$ to $v$ must stay below $w$, i.e.
	\begin{equation}
	\Tilde{w} < v \leq w \in \mathcal{E}_-^2 (\mathcal{O})\,.
	\label{eq:4.19}
	\end{equation}

To complete the proof of claim~(iii), we obtain a contradiction to \eqref{eq:4.19} by showing
	\begin{equation}
	v(0) > 0\,.
	\label{eq:4.20}
	\end{equation}
Indeed $\mathcal{E}_-^2(\mathcal{O}) \ni\Tilde{w} \leadsto \Tilde{v} = \Tilde{v}_+ \in \mathcal{E}_+^1(\mathcal{O})$, by \eqref{eq:4.11} and property~(ii).
In particular \eqref{eq:4.19} implies $\Tilde{v} \neq v = \Sigma_+^0(\Tilde{w})$.
Hence proposition~\ref{prop:3.1}(iv) locates $\Tilde{v} = \Tilde{v}_+ \in \mathcal{E}_+^1(\Tilde{w})$.
The chain of saddle unstable manifolds in $\mathcal{E}_+^1(\Tilde{w})$ ascends monotonically to the pole $v=\Sigma_+^0(\Tilde{w})$.
In particular $v(0) > \Tilde{v}(0) >0$.
This establishes contradiction \eqref{eq:4.20}, and proves claim~(iii).

To show claim~(iv), indirectly, suppose $v_- \neq  \mathbf{N}$.
Then definition \eqref{eq:4.10} of the North pole $v_- = \Sigma_-^0(w)$ of the face of $w=\Tilde{w} \in \mathcal{E}_-^2 (\mathcal{O})$ implies
	\begin{equation}
	\mathbf{N} < v_- < w \leadsto v_-\,.
	\label{eq:4.21a}
	\end{equation}
In particular $v_- \in \mathcal{E}_-^2(\mathcal{O})$, because $v_-$ cannot block $\mathcal{O} \leadsto \mathbf{N}$.

Let $\hat{v}$ be any equilibrium such that
	\begin{equation}
	v_-(1) < \hat{v} (1) < w (1) < 0\,.
	\label{eq:4.21b}
	\end{equation}
Such equilibria exist, by Morse-adjacency \eqref{eq:1.26} of $h_1$-adjacent equilibria, because the pole $v_- = \mathcal{E}_-^0(w)$ of the $i=2$ source $w$ is an $i=0$ sink.

Suppose first that $\hat{v} \in \mathcal{E}_-^2(\mathcal{O})$.
Then $w,v_-, \hat{v} \in \mathcal{E}_-^2 (\mathcal{O})$, \eqref{eq:4.21b}, proposition~\ref{prop:3.1}(iv) and the definition of $w$ imply
	\begin{equation}
	z(w-\hat{v}) = 0_+ \quad \text{and} \quad z(v_--\hat{v}) 
	\in \lbrace 0_-,1_+ \rbrace\,.
	\label{eq:4.21c}
	\end{equation}
In particular $\hat{v}$ blocks $w \leadsto v_-$ by \eqref{eq:4.9a}, \eqref{eq:4.9b} -- a contradiction to \eqref{eq:4.21a}.

In the remaining case $\hat{v} \not\in \mathcal{E}_-^2(\mathcal{O})\, \cup\, \mathcal{E}_-^0(\mathcal{O})$, we conclude $\hat{v} \in \mathcal{E}_+^1(\mathcal{O})$.
Let us now be more specific: for $\hat{v}$ we choose the $h_1$-successor of the sink $v_-$.
Then $\hat{v}$ is an $i=1$ saddle, and $\hat{v} \leadsto v_-$ by $h_1$-adjacency.
But this implies that the whole edge $W^u(\hat{v})$ lies in the meridian $\Sigma_+^1(\mathcal{O})$.
Consequently $\mathbf{N} \neq v_- \in \mathcal{E}_+^1(\mathcal{O})$.
This contradicts \eqref{eq:4.21a}, $v_- \in \mathcal{E}_-^2(\mathcal{O})$ and proves claim~(iv), $v_- = \mathbf{N}$.

To show claim~(v) we first show 
	\begin{equation}
	\mathcal{E}_+^1(w) \subseteq \mathcal{E}_+^1(\mathcal{O}) \cap
	\lbrace v \in \mathcal{E} \, |\, \mathbf{N}(1) < v(1) < w(1) \rbrace\,.
	\label{eq:4.22a}
	\end{equation}
Indeed let $v \in \mathcal{E}_+^1(w)$.
In particular $v(0) > w(0)$ and $v(1) < w(1) < 0$.
With $v(0) > w(0)$, the definition of $w$ implies $v(0) >0$.
This shows $v \in \mathcal{E}_+^1(\mathcal{O})$ and $v \neq \mathbf{N} = \mathcal{E}_-^0(\mathcal{O})$, as claimed.

To prove (v) it remains to show the converse claim
	\begin{equation}
	\lbrace v \in \mathcal{E} \,|\, \mathbf{N} (1) < v(1) < w(1)\rbrace
	\subseteq \mathcal{E}_+^1(w)\,.
	\label{eq:4.22b}
	\end{equation}
We first observe that the $h_1$-path of all equilibria from $\mathbf{N}$ to $w = \Tilde{w}$ is an $\mathbf{N}$-polar $h_1$-serpent, emanating from $\mathbf{N}$, which terminates at the $h_1$-last saddle $\Tilde{v} = \Tilde{v}_+ \in \mathcal{E}_+^1(\mathcal{O})$ before $w = \Tilde{w}$.
Here we use definitions \eqref{eq:1.25}, \eqref{eq:4.10} and claims~(ii), (iii).
By theorem~\ref{thm:4.1}(iii) this path on the 1-skeleton of the Sturm dynamic complex cannot leave the meridian $\Sigma_+^1(\mathcal{O})$ which it follows from $\underline{v} = \underline{v}_+$ onwards; see claim~(i).
Therefore
	\begin{equation}
	\lbrace v \in \mathcal{E} \, | \, \mathbf{N}(1) < v(1) < w(1) \rbrace
	\subseteq \mathcal{E}_+^1(\mathcal{O})\,,
	\label{eq:4.23}
	\end{equation}
which just barely misses claim \eqref{eq:4.22b}.
Now the definition \eqref{eq:4.10} of $w$ allows us to conclude $z(v-w) = 1_+$, for all $v$ in the left hand side of \eqref{eq:4.23}.
Moreover these elements $v$ are ordered strictly monotonically, by $z=0$, along the meridian $\mathcal{E}_+^1(\mathcal{O})$.
Therefore each of these equilibria $v$ is 1-adjacent to $w$, and the Wolfrum lemma~\ref{lem:4.2} asserts heteroclinic orbits $w \leadsto v$, for each element $v$.
This proves \eqref{eq:4.22b}, claim~(v), and the lemma.
\end{proof}

Thanks to the four trivial equivalences \eqref{eq:2.4}--\eqref{eq:2.9} of definition~\ref{def:2.3}, generated by the linear involutions $u \mapsto -u$ and $x \mapsto 1-x$, the previous lemma comes in four variants.
From \eqref{eq:1.24a} we recall the definitions
	\begin{equation}
	\begin{aligned}
	w_-^\iota := & \quad \text{the } 
	h^\iota\text{-last equilibrium preceding } \mathcal{O}\,,\\
	w_+^\iota := & \quad \text{the } 
	h^\iota\text{-first equilibrium following } \mathcal{O}\,,
	\end{aligned}
	\label{eq:4.24}
	\end{equation}
for $\iota = 0,1$.
By adjacency of Morse numbers, all $w_\pm^\iota$ are $i=2$ sources.
Comparing the notations \eqref{eq:4.10} and \eqref{eq:4.24}, we observe that
	\begin{equation}
	w = w_-^0 \in \Sigma_-^2(\mathcal{O})\,.
	\label{eq:4.25}
	\end{equation}
By lemma~\ref{lem:4.3} and translation table \eqref{eq:1.17}, $w=w_-^0$ has been identified as the source of the unique 2-cell in the hemisphere $\Sigma_-^2(\mathcal{O}) = \mathbf{W}$ which contains the first edge $c_{\underline{v}} = c_{\underline{v}_+}$ of the $\mathbf{WE}$-meridian $\Sigma_+^1(\mathcal{O})$, at its $ \mathbf{N} = \Sigma_-^0(\mathcal{O})$-polar end.
In short: $c_w$ is edge-adjacent to $\mathbf{WE}$ at $\mathbf{N}$.
Define the four faces
	\begin{equation}
	c_\pm^\iota :=\quad c_{w_\pm^\iota}\,,
	\label{eq:4.26}
	\end{equation}
analogously to $c_-^0 = c_w$, with $\iota =0,1$.
Note how the two faces $c_-^\iota$ in the same hemisphere $\Sigma_-^2 (\mathcal{O}) = \mathbf{W}$ may happen to coincide with each other, in special cases, as may $c_+^\iota$ in $\Sigma_+^2(\mathcal{O}) = \mathbf{E}$.
We call, in short
	\begin{equation}
	\begin{aligned}
	 c_-^0 :\quad  & \text{the } 
	\mathbf{NE}\text{-face of } \mathbf{W}\,; \\
	c_-^1 :\quad  & \text{the } 
	\mathbf{NW}\text{-face of } \mathbf{W}\,; \\
	c_+^0 :\quad  & \text{the } 
	\mathbf{SE}\text{-face of } \mathbf{E}\,; \\
	c_+^1 :\quad  & \text{the } 
	\mathbf{SW}\text{-face of } \mathbf{E}\,.
	\end{aligned}
	\label{eq:4.27}
	\end{equation}
The four trivial equivalences of definition~\ref{def:2.3} then provide the following corollary to lemma~\ref{lem:4.3}.

\begin{cor}\label{cor:4.4}\hfill
\begin{itemize}
\item[(i)] the $\mathbf{NW}$-face $c_-^1$ of $\mathbf{W}$ is edge-adjacent to the meridian $\mathbf{EW}$ at $\mathbf{N}$;
\item[(ii)] the $\mathbf{NE}$-face $c_-^0$ of $\mathbf{W}$ is edge-adjacent to the meridian $\mathbf{WE}$ at $\mathbf{N}$;
\item[(iii)] the $\mathbf{SW}$-face $c_+^1$ of $\mathbf{E}$ is edge-adjacent to the meridian $\mathbf{WE}$ at $\mathbf{S}$;
\item[(iv)] the $\mathbf{SE}$-face $c_+^0$ of $\mathbf{E}$ is edge-adjacent to the meridian $\mathbf{EW}$ at $\mathbf{S}$.
\end{itemize}
\end{cor}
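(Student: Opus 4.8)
The plan is to derive claim~(ii) directly from lemma~\ref{lem:4.3} and then to obtain the three remaining claims by feeding that one statement through the three nontrivial trivial equivalences of definition~\ref{def:2.3}. First I would observe, using \eqref{eq:4.25} and the translation table \eqref{eq:1.17}, that $c_w = c_{w_-^0} = c_-^0$, that $\Sigma_-^2(\mathcal{O}) = \mathbf{W}$, $\Sigma_+^1(\mathcal{O}) = \mathbf{WE}$ and $\mathbf{N} = \Sigma_-^0(\mathcal{O})$; under these identifications the closing sentence of lemma~\ref{lem:4.3}, asserting that $c_w$ is the unique 2-cell of $\Sigma_-^2(\mathcal{O})$ adjacent to the first edge of the meridian $\Sigma_+^1(\mathcal{O})$ at $\mathbf{N}$, is verbatim claim~(ii). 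So only claims~(i), (iii), (iv) require work, and that work is purely one of bookkeeping the symmetry action.

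Next I would record how the two generating involutions permute the relevant labels. Each is induced by a homeomorphism of $\mathcal{C}$ carrying cells to cells, hence preserves the relation ``a face is edge-adjacent to a named meridian at a named pole''; it therefore suffices to track the images of the poles, meridians, hemispheres, and of the four faces $c_\pm^\iota$. For $u \mapsto -u$ one has $z(-\varphi) = z(\varphi)$ with reversed sign index, so $\Sigma_\pm^j \mapsto \Sigma_\mp^j$ at every level $j=0,1,2$; via \eqref{eq:1.17} this swaps $\mathbf{N} \leftrightarrow \mathbf{S}$, $\mathbf{EW} \leftrightarrow \mathbf{WE}$ and $\mathbf{W} \leftrightarrow \mathbf{E}$, while the order reversal $h_\iota \mapsto h_\iota \kappa$ interchanges ``preceding'' and ``following'' $\mathcal{O}$ in \eqref{eq:4.24}, hence $c_-^\iota \leftrightarrow c_+^\iota$. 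For $x \mapsto 1-x$ the interchange $h_0 \leftrightarrow h_1$ swaps the superscripts in \eqref{eq:4.24}, giving $c_-^0 \leftrightarrow c_-^1$ and $c_+^0 \leftrightarrow c_+^1$.

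The main subtlety, and the one point I would treat with care, is the action of the $x$-flip on the signed levels: the sign index of $z(v-\mathcal{O}) = j_\pm$ is read at $x=0$ before the flip but at $x=1$ afterwards, and for a profile with $j$ simple sign changes $\mathrm{sign}(v(1)-\mathcal{O}(1)) = (-1)^j\,\mathrm{sign}(v(0)-\mathcal{O}(0))$. Hence $\Sigma_\pm^j \mapsto \Sigma_{\pm(-1)^j}^j$, so that the poles $(j=0)$ and the hemispheres $(j=2)$ stay fixed while only the meridians $(j=1)$ are interchanged, $\mathbf{EW} \leftrightarrow \mathbf{WE}$; the bipolar orientation is unaffected, consistent with $\mathbf{N},\mathbf{S}$ fixed. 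Everything else reduces to this single sign count.

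Finally I would apply the three involutions to claim~(ii). The $x$-flip sends the data $(c_-^0, \mathbf{W}, \mathbf{WE}, \mathbf{N})$ to $(c_-^1, \mathbf{W}, \mathbf{EW}, \mathbf{N})$, yielding claim~(i); the $u$-flip sends it to $(c_+^0, \mathbf{E}, \mathbf{EW}, \mathbf{S})$, yielding claim~(iv); and their composition sends it to $(c_+^1, \mathbf{E}, \mathbf{WE}, \mathbf{S})$, yielding claim~(iii). As these three images exhaust the remaining assertions, the corollary follows. The only thing to keep honest throughout is the consistent action of each involution on the labels, which is exactly the sign bookkeeping isolated in the previous paragraph.
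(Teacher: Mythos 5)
Your proposal is correct and follows essentially the same route as the paper: claim~(ii) is read off directly from the closing statement of lemma~\ref{lem:4.3} via \eqref{eq:4.25} and the translation table \eqref{eq:1.17}, and the remaining three claims are obtained by applying the trivial equivalences $u\mapsto -u$ and $x\mapsto 1-x$ of definition~\ref{def:2.3}. Your sign bookkeeping for the action of the involutions on $\Sigma_\pm^j$, the meridians, and the faces $c_\pm^\iota$ is accurate and in fact spells out more explicitly what the paper leaves implicit.
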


With these results and notations we are now exhaustively equipped to complete the proof of theorem~\ref{thm:4.1}(iv), i.e. the boundary edge overlap of the faces $\mathbf{NE}$ with $\mathbf{SW}$ along the meridian $\mathbf{WE}$, and of the faces $\mathbf{NW}$ with $\mathbf{SE}$ along the meridian $\mathbf{EW}$.

\begin{figure}[]
\centering \includegraphics[width=0.5\textwidth]{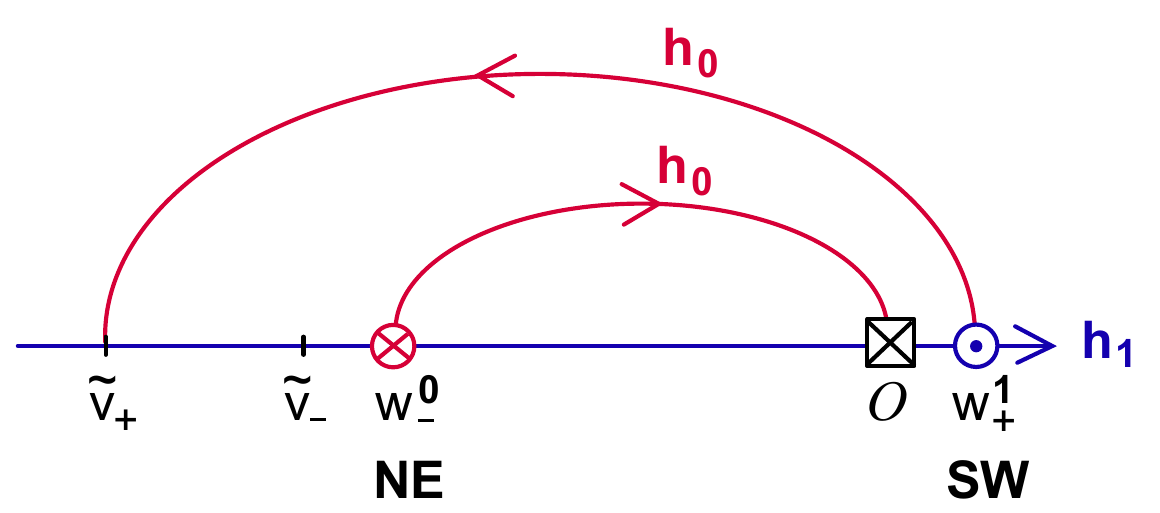}
\caption{\emph{
Edge overlap of face $\mathbf{NE} = c_{w_-^0}$ with face $\mathbf{SW} = c_{w_+^1}$ along the meridian $\mathbf{WE} = \Sigma_+^1(\mathcal{O})$.
Note how the $h_0$-successor $\Tilde{v}_-$ of $ w_+^1$ (nonstrictly) $h_1$-precedes the $h_1$-predecessor $\Tilde{v}_+$ of $w_-^0$, by the meander property of the non-intersecting oriented $h_0$-arcs $w_+^1\Tilde{v}_-$ and $w_-^0 \mathcal{O}$.
}}
\label{fig:4.2}
\end{figure}

\begin{proof}[\textbf{Proof of Theorem~\ref{thm:4.1}(iv)}]
By the trivial equivalence $x \mapsto 1-x$, which preserves $\Sigma_\pm^0(\mathcal{O})$, $\Sigma_\pm^2(\mathcal{O})$ but interchanges meridians and $\iota =0$ with $\iota =1$, it is sufficient to show edge overlap of face $\mathbf{NE} = c_- = c_{w_-^0}$ with face $\mathbf{SW} = c_+= c_{w_+^1}$ along the meridian $\mathbf{WE}= \Sigma_+^1(\mathcal{O})$.
We use the meander property of the shooting meander $h_0$ over the horizontal axis $h_1$; see fig.~\ref{fig:4.2}.
For the geometry in the dynamic Thom-Smale complex see also fig.~\ref{fig:4.1}.

As in \eqref{eq:4.10}, let $\Tilde{v}_+$ denote the $h_1$-last equilibrium $h_1$-before the source $w_-^0$.
In the notation list \eqref{eq:4.10} of lemma~\ref{lem:4.3}, we observe
	\begin{equation}
	w_-^0 = w\,, \quad \Tilde{v}_+ = 
	\Tilde{v} \in \mathcal{E}_+^1 ( \mathcal{O}) 
	\,\cap\, \mathcal{E}_+^1(w)\,.
	\label{eq:4.28}
	\end{equation}
In words, the saddle $\Tilde{v}_+$ in the boundary of the face $\mathbf{NE} = c_-$  of $w_-^0$ lies on the meridian $\mathbf{WE}= \Sigma_+^1(\mathcal{O})$, together with its unstable manifold edge $c_{\Tilde{v}_+}$.

Analogously, let $\Tilde{v}_-$ denote the $h_0$-first equilibrium $h_0$-after the source $w_+^1$.
The trivial equivalence $u \mapsto -u$ together with $x\mapsto 1-x$ then transforms $w_+^1$ to $w$ and $\Tilde{v}_-$ to $\Tilde{v}$, in \eqref{eq:4.10} and lemma~\ref{lem:4.3}.
As a result, analogously to \eqref{eq:4.28}, we obtain
	\begin{equation}
	\Tilde{v}_-  \in \mathcal{E}_+^1 ( \mathcal{O}) 
	\,\cap\, \mathcal{E}_+^1(w_+^1)\,.
	\label{eq:4.29}
	\end{equation}
In words, the saddle $\Tilde{v}_-$ in the boundary of the face $\mathbf{SW} = c_+$ of $w_+^1$ lies on the same meridian $\mathbf{WE}= \Sigma_+^1(\mathcal{O})$ as $\Tilde{v}_+$, together with its unstable manifold edge $c_{\Tilde{v}_-}$.

In fig~\ref{fig:4.2} we have illustrated the nested oriented $h_0$-arcs $w_-^0\mathcal{O}$ and $w_+^1 \Tilde{v}_-$.
Indeed the shooting meander of $h_0$ crosses the horizontal $h_1$-axis upwards, at even Morse numbers, and downwards at odd ones.
Moreover $h_0$ makes a right turn when Morse numbers increase, and a left turn when they decrease; see \eqref{eq:1.22a}.
By the Jordan curve property of meanders, the above two arcs cannot intersect.
Their Morse indices then imply their nesting.

The nesting property of the arc $w_-^0\mathcal{O}$ inside the arc of $w_+^1 \Tilde{v}_-$ implies
	\begin{equation}
	\Tilde{v}_-(1) \leq \Tilde{v}_+(1)\,.
	\label{eq:4.30}
	\end{equation}
Indeed the left end $\Tilde{v}_-$ of the outer arc precedes the left end $\Tilde{v}_+$ of the inner arc, nonstrictly, along the horizontal $h_1$-axis.
The meridian $\mathbf{WE} = \Sigma_+^1(\mathcal{O})$ carries the bipolar orientation from $\mathbf{N}$ to $\mathbf{S}$, by the monotonically increasing order of $z=0$.
In particular \eqref{eq:4.28}--\eqref{eq:4.30} imply that the saddle $\Tilde{v}_- \in \Sigma_+^1(w_+^1) \subseteq \Sigma_+^1(\mathcal{O}) = \mathbf{WE}$ (nonstrictly) precedes the saddle $\Tilde{v}_+\in \Sigma_+^1(w_-^0) \subseteq \Sigma_+^1(\mathcal{O}) = \mathbf{WE}$ on the $\mathbf{N}$ to $\mathbf{S}$ oriented meridian $\mathbf{WE}$, together with their unstable manifold edges; see fig.~\ref{fig:1.1}.
By lemma~\ref{lem:4.3}(v), the face boundaries $\Sigma_+^1(w_+^1)$ and $\Sigma_+^1(w_-^0)$ extend all the way from $c_{\Tilde{v}_-}$ to $\mathbf{S}$ and from $\mathbf{N}$ to $c_{\Tilde{v}_+}$, respectively, on the meridian $\mathbf{WE} = \Sigma_+^1(\mathcal{O})$.
Therefore the boundaries $\partial \mathbf{NE} = \partial c_- \supseteq c_{\Tilde{v}_+}$ and $\partial \mathbf{SW} = \partial c_+ \supseteq c_{\Tilde{v}_-}$ overlap in at least one edge of the meridian $\mathbf{WE}$.
The overlap includes the (possibly identical) edges $c_{\Tilde{v}_\pm}$.
This proves theorem~\ref{thm:4.1}(iv). 
\end{proof}


\section{From 3-cell templates to 3-meander templates}
\label{sec5}

To any prescribed abstract 3-cell template $\mathcal{C} = \bigcup_{v \in \mathcal{E}} c_v$ we formally assign a 3-meander template $\mathcal{M}$, in this section.
See definitions~\ref{def:1.2}, \ref{def:1.3} and figs.~\ref{fig:1.1}, \ref{fig:1.3} for these notions.
In definition~\ref{def:5.1} below, we formally assign an SZS-pair $(h_0,h_1)$ to the 3-cell template $\mathcal{C}$.
Each $h_\iota$, for $\iota=0,1$, can be viewed as a Hamiltonian path, from pole $\mathbf{N}$ to pole $\mathbf{S}$, in the abstract connection graph associated to the abstract regular cell complex $\mathcal{C}$:
vertices are the cell barycenters $v \in \mathcal{E}$, and undirected edges run between each $v$ and all barycenters $\Tilde{v}$ of boundary cells $c_{\Tilde{v}} \subseteq \partial c_v$ of maximal dimension $\dim c_{\Tilde{v}} = \dim c_v-1$.
Theorem~\ref{thm:5.2} then asserts that
	\begin{equation}
	\sigma:= h_0^{-1} \circ h_1
	\label{eq:5.1}
	\end{equation}
is a Sturm permutation and, in fact, the shooting meander $\mathcal{M}$ associated to $\sigma$ is a 3-meander template.
Here we associate Morse numbers, by~\eqref{eq:1.22a}, and a formal shooting curve $\mathcal{M}$ to any permutation $\sigma \in S_N$.
Indeed we may define the curve $\mathcal{M}$ to follow the labels $\sigma(j)$ of $j = 1, \ldots, N$ along the horizontal $h_1$-axis, switching sides at each vertex.
Properties (i)--(iv) of 3-meander templates $\sigma, \mathcal{M}$, as required in definition~\ref{def:1.3}, are proved in lemmata~\ref{lem:5.3}--\ref{lem:5.6} below, respectively.
The meander property of $\sigma, \mathcal{M}$ states that $\mathcal{M}$ is a Jordan curve.
This is proved in lemma~\ref{lem:5.7}.

We caution our reader, once again, that theorem~\ref{thm:5.2}, via the Sturm permutation $\sigma$, only associates ``some'' new Sturm attractor $\widetilde{\mathcal{A}}$ to the original Sturm 3-ball $\mathcal{A}_f = \mathrm{clos\,} W^u(\mathcal{O})$ and to its 3-cell Thom-Smale complex $\mathcal{C} = \mathrm{clos\,}c_\mathcal{O}$ for the 2-sphere hemisphere decomposition $\Sigma_\pm^j$, $0\, \leq j\leq 2$ of $\partial W^u(\mathcal{O})$.
We do not prove $\widetilde{\mathcal{A}} = \mathcal{A}_f$ here.
We do not even prove that $\widetilde{\mathcal{A}} = \mathrm{clos\,} \widetilde{W}^u(\mathcal{O})$ is a Sturm 3-ball.
Only the sequel \cite{firo3d-2} will address this cliffhanger.
In fact we will then show that the dynamic Thom-Smale complex of $\widetilde{\mathcal{A}}$ coincides with the prescribed complex $\mathcal{C}$.
This design of $\widetilde{\mathcal{A}}$ by $\mathcal{C}$ will complete the cycle of template implications~\eqref{eq:4.1}, and will justify our assignment of an SZS-pair $(h_0, h_1)$ of Hamiltonian paths which looks so arbitrary in the following definition.
See fig.~\ref{fig:5.1} for an illustration.

\begin{figure}[t!]
\centering \includegraphics[width=\textwidth]{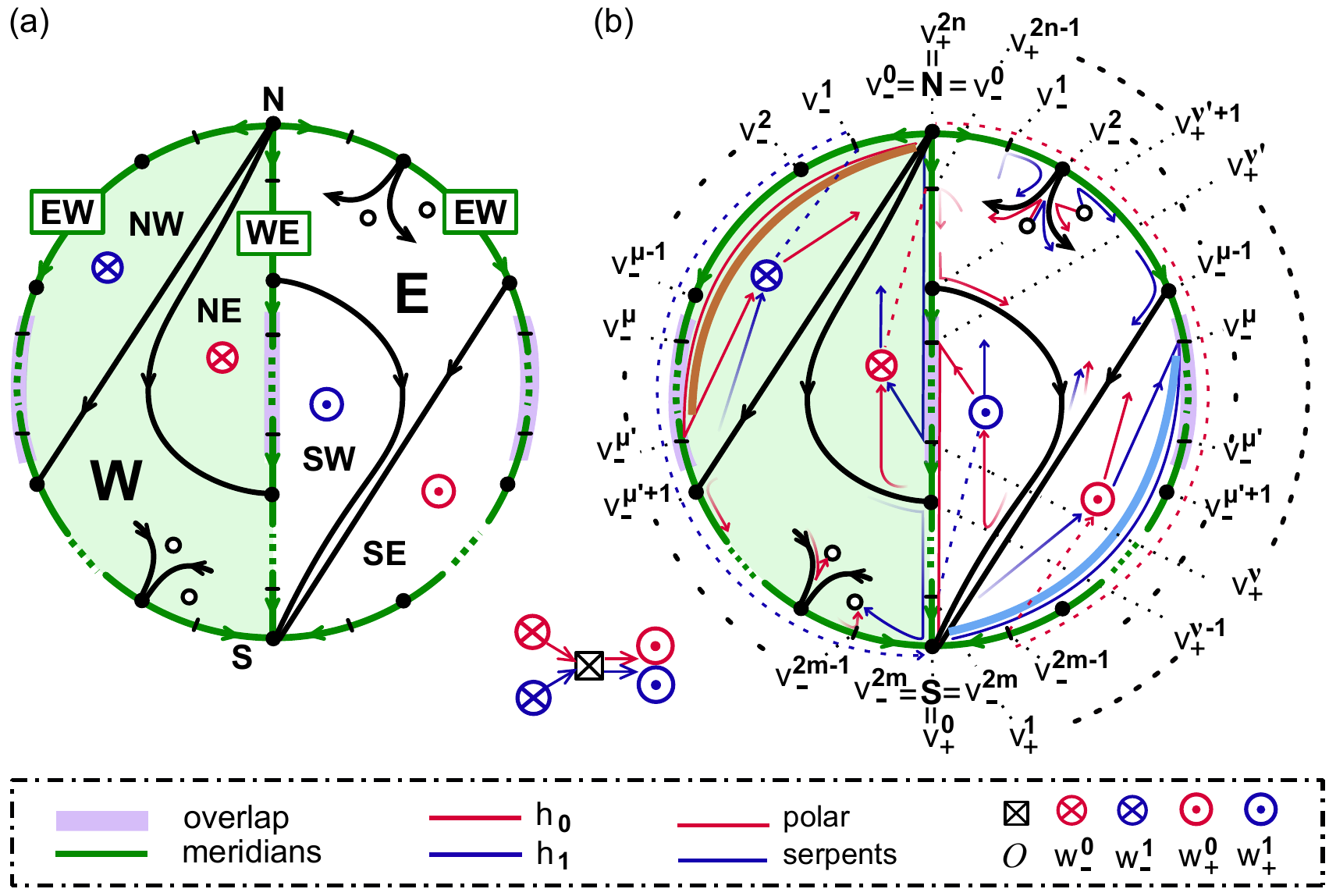}
\caption{\emph{
The SZS-pair $(h_0,h_1)$ in a 3-cell template $\mathcal{C}$, with poles $\mathbf{N}, \mathbf{S}$, hemispheres $\mathbf{W}, \mathbf{E}$  and meridians $\mathbf{EW}, \mathbf{WE}$.
Dashed lines indicate the $h_\iota$-ordering of vertices in the closed hemisphere, when $\mathcal{O}$ and the other hemisphere are ignored, according to definition~\ref{def:5.1}(i).
The actual paths $h_\iota$ tunnel, from $w_ -^\iota \in  \mathbf{W}$ through the 3-cell barycenter $\mathcal{O}$, and re-emerge at $w_+^\iota \in  \mathbf{E}$, respectively.
Note the boundary overlap of the faces $\mathbf{NW}, \mathbf{SE}$ of $w_-^1, w_+^0$ from $v_-^{\mu-1}$ to $v_-^{\mu ' +1}$ on the $\mathbf{EW}$ meridian.
Similarly, the boundaries of the faces $\mathbf{NE}, \mathbf{SW}$ of $w_-^0, w_+^1$ overlap from $v_+^{\nu -1}$ to $v_+^{\nu ' +1}$ along $\mathbf{WE}$.
}}
\label{fig:5.1}
\end{figure}

\begin{defi}\label{def:5.1}
Let $\mathcal{C} = \bigcup_{v \in \mathcal{E}} c_v$ be a 3-cell template with oriented 1-skeleton $\mathcal{C}^1$, poles $\mathbf{N}, \mathbf{S}$, hemispheres $\mathbf{W}, \mathbf{E}$, and meridians $\mathbf{EW}$, $\mathbf{WE}$.
A pair $(h_0, h_1)$ of bijections $h_\iota$: $ \lbrace 1, \ldots , N \rbrace \rightarrow \mathcal{E}$ is called the SZS-\emph{pair assigned to} $\mathcal{C}$ if the following conditions hold.
\begin{itemize}
\item[(i)] The restrictions of range $h_\iota$ to $\mathrm{clos\,} \mathbf{W}$ form an SZ-pair $(h_0, h_1)$, in the closed Western hemisphere.
The analogous restrictions form a ZS-pair $(h_0,h_1)$ in the closed Eastern hemisphere $\mathrm{clos\,} \mathbf{E}$.
See definition~\ref{def:2.2}.
\item[(ii)] In the notation of figs.~\ref{fig:1.1}, \ref{fig:2.3}, and for each $\iota =0,1$, the permutations $h_\iota$ traverse $w_-^\iota, \mathcal{O}, w_+^\iota$, successively.
\end{itemize}
\end{defi}

To see that, indeed, this definition uniquely defines the bijections $h_\iota$ we recall lem-ma~\ref{lem:2.7}.
By the orientation of boundary edges in definition~\ref{def:1.2}(iii) of a 3-cell template, the hemisphere closures $\mathrm{clos\,} \mathbf{W}$ and $\mathrm{clos\,} \mathbf{E}$ are Western and Eastern, in the sense of definition~\ref{def:2.6}.
For example, consider the SZ-pair $(h_0,h_1)$ in $\mathrm{clos\,} \mathbf{W}$.
Then the S-path $j \mapsto h_0(j)$, ordered by its labels $j \in \lbrace 1, \ldots , N \rbrace$, traverses all vertices $v$ of $\mathbf{N} \cup \mathbf{EW} \cup \mathbf{W}$, before finishing in bipolar order along $\mathbf{WE} \cup \mathbf{S}$.
Indeed, the $\mathbf{S}$-polar serpent of $h_0$, restricted to $\mathrm{clos\,} \mathbf{W}$, is full by lemma~\ref{lem:2.7}.
In particular $w_-^0$ is the last vertex of $h_0$, not only in $\mathbf{W}$ but, in $\mathbf{N} \cup \mathbf{EW} \cup \mathbf{W}$.
Similarly, the condition that $(h_0,h_1)$ be a ZS-pair in $\mathrm{clos\,} \mathbf{E}$ requires $j \mapsto h_0(j)$ to traverse all vertices of $\mathbf{N} \cup \mathbf{EW}$ before $\mathbf{S} \cup \mathbf{WE} \cup \mathbf{E}$, in the order of $j$, by lemma~\ref{lem:2.7}.
The first vertex of $h_0$ in $\mathbf{S} \cup \mathbf{WE} \cup \mathbf{E}$ is therefore $w_+^0$.
This shows $h_0$ is well-defined in $\mathrm{clos\,} \mathbf{W}$ and $\mathrm{clos\,} \mathbf{E}$, by the planar theorem~\ref{thm:2.4}.
Compatibility of the requirements in definition \ref{def:5.1}(i) on the intersection 
	\begin{equation}
	\mathrm{clos\,} \mathbf{W} \cap \mathrm{clos\,} \mathbf{E} = 
	( \mathbf{N} \cup \mathbf{EW} ) \cup (\mathbf{WE} \cup \mathbf{S})
	\label{eq:5.2}
	\end{equation}
follows from the consistent bipolar orientation of the 1-skeleton of the 3-cell template $\mathcal{C}$.
Compatibility of the requirements (i) and (ii) was shown above.

We include a second, perhaps more direct, argument for uniqueness of the SZS-pair $(h_0,h_1)$.
The bipolar orientation of the 3-cell template $\mathcal{C}$ fixes the orders of $h_0$ and $h_1$ uniquely on the 1-skeleton of $\mathcal{C}$.
The SZ- and ZS-requirements of (i) determine how $h_\iota$ traverses each face, except for the faces of the $h_\iota$-neighbors $w_\pm^\iota$ of $\mathcal{O}$.
That final missing piece is uniquely prescribed to be $w_-^\iota \mathcal{O}w_+^\iota$, by requirement (ii) of definition~\ref{def:5.1}.
This assigns a unique SZS-pair $(h_0, h_1)$ of Hamiltonian paths, from pole $\mathbf{N}$ to pole $\mathbf{S}$, for any given 3-cell template $\mathcal{C}$.

It is useful to summarize the precise \emph{ordering of vertices} $\xi, \eta$ in poles, hemispheres, and separating meridians, as induced by the bijective labelings $h_\iota$ in a more formal notation.
We define
	\begin{equation}
	\xi <_\iota \eta \quad : \Longleftrightarrow
	\quad h_\iota^{-1} ( \xi ) < h_\iota^{-1}(\eta )\,.
	\label{eq:5.4a}
	\end{equation}		
Let $(h_0,h_1)$ be the SZS pair of $\mathcal{C}$.
Then definition~\ref{def:5.1} implies the vertex orderings
	\begin{align}
	\mathbf{N} \cup \mathbf{EW} \cup \mathbf{W} 
	 \quad <_0 \quad \mathcal{O} \quad <_0 \quad
	\mathbf{E} \cup \mathbf{WE} \cup \mathbf{S}\,,
	\label{eq:5.4b}\\
	\mathbf{N} \cup \mathbf{WE} \cup \mathbf{W} 
	 \quad <_1	\quad \mathcal{O} \quad <_1 \quad
	\mathbf{E} \cup \mathbf{EW} \cup \mathbf{S}\,.
	\label{eq:5.4c}
	\end{align}
We cannot resist the temptation of a consistency check with our overall intentions here: 
the equivalence of Sturm 3-ball attractors $\mathcal{A}$, 3-cell templates $\mathcal{C}$, and 3-meander templates $\mathcal{M}$.
If the abstract 3-cell template $\mathcal{C}$ had originated as the dynamic Thom-Smale complex of a 3-ball attractor $\mathcal{A}$ -- \emph{which we do not assume} -- then the orderings \eqref{eq:5.4b}, \eqref{eq:5.4c} would indeed be implied by the decomposition of $\Sigma^2(\mathcal{O}) = \partial W^u(\mathcal{O})$ into the hemispheres $\Sigma_\pm^j(\mathcal{O})$, $\, j=0,1,2$.

Let us at least motivate our seemingly arbitrary definition of SZS-pairs $(h_0, h_1)$, in view of this overall intention.
Consider any 2-cell $\mathrm{clos\,} c_\mathcal{O}$, with the temporary notation $i(\mathcal{O}) =2$ of section~\ref{sec2}, in a Sturm global attractor $\mathcal{A}$.
We claim that, in general, $h_0$ and $h_1$ must traverse $\mathcal{O}$ as indicated, for a single 2-cell $\mathcal{A}= \mathrm{clos\,} c_\mathcal{O}$, in fig.~\ref{fig:2.2}.
Indeed we have heteroclinic orbits $\mathcal{O} \leadsto w_\pm^\iota$ for the immediate predecessors $w_-^\iota$ and successors $w_+^\iota$ of $\mathcal{O}$ in the ordering of $h^\iota$ at $x= \iota = 0,\,1$, because blocking of immediate neighbors is not possible.
The only possible exception arises if $i(w_\pm^\iota) =3$; we exclude this case for a moment and assume $i(w_\pm^\iota) =1$.
Note $w_\pm^0 \in \mathcal{E}_\pm^1(\mathcal{O})$ and $w_\pm^1 \in \mathcal{E}_\mp^1(\mathcal{O})$.
We claim that the $h_0$-neighbors $w_\pm^0$ of $\mathcal{O}$ are the $h_1$-extrema of $\mathcal{E}_\pm^1(\mathcal{O})$.
Indeed let $v \in \mathcal{E}_-^1(\mathcal{O}) \smallsetminus w_-^0$.
Then $z(v-w_-^0) =0$ by the bipolar orientation of $\mathrm{clos\,}\Sigma_-^1(\mathcal{O})$ from the pole $\mathbf{N}=\mathcal{E}_-^0(\mathcal{O})$ to $\mathbf{S}=\mathcal{E}_+^0(\mathcal{O})$.
Because $w_-^0$ is the $h_0$-predecessor of $\mathcal{O}$, we know $v-w_-^0 < 0$ at $x=1$.
This proves $h_1$-maximality of $w_-^0$ in $\mathcal{E}_-^1(\mathcal{O})$.
The argument for $h_1$-minimality of $w_+^0$ in $\mathcal{E}_+^1(\mathcal{O})$ is analogous.
Likewise, the $h_1$-neighbors $w_\pm^1$ of $\mathcal{O}$ are the $h_0$-extrema of $\mathcal{E}_\mp^1(\mathcal{O})$.
Together this proves that $(h_0,h_1)$ traverse the disk $\mathrm{clos\,} c_\mathcal{O} \subseteq \mathcal{A}$, in general, as indicated in fig.~\ref{fig:2.2} and as required for ZS-pairs.

Of course, one ambiguity arises in this argument.
We might well reflect fig.~\ref{fig:2.2}(b) through a vertical axis, i.e. interchange the boundary labels $\Sigma_+^1$ and $\Sigma_-^1$, to obtain an SZ-pair $(h_0,h_1)$ instead.
Returning to 3-cells $\mathcal{A} = \mathrm{clos\,} c_\mathcal{O}$, $\,i(\mathcal{O}) =3$, this ambiguity appears, necessarily, when we draw the hemispheres $\mathbf{W}$ and $\mathbf{E}$, alias $\Sigma_\pm^2(\mathcal{O})$, in the same plane; see fig.~\ref{fig:1.1}.
Then the meridians $\mathbf{WE} = \Sigma_+^1$ and $\mathbf{EW} = \Sigma_-^1$ enforce SZ-pairs $(h_0,h_1)$ in $\mathbf{W} = \Sigma_-^2$, and ZS-pairs $(h_0,h_1)$ in $\mathbf{E} = \Sigma_+^2$, as required in definition~\ref{def:5.1}, by the opposite planar orientations of $\mathbf{W}$ and $\mathbf{E}$ in their graphical representation.
We also note the exceptional role of the face barycenters $w_\pm^\iota$ which possess the unique equilibrium with Morse index $i=3$ as one $h_\iota$-neighbor.

\begin{figure}[t!]
\centering \includegraphics[width=\textwidth]{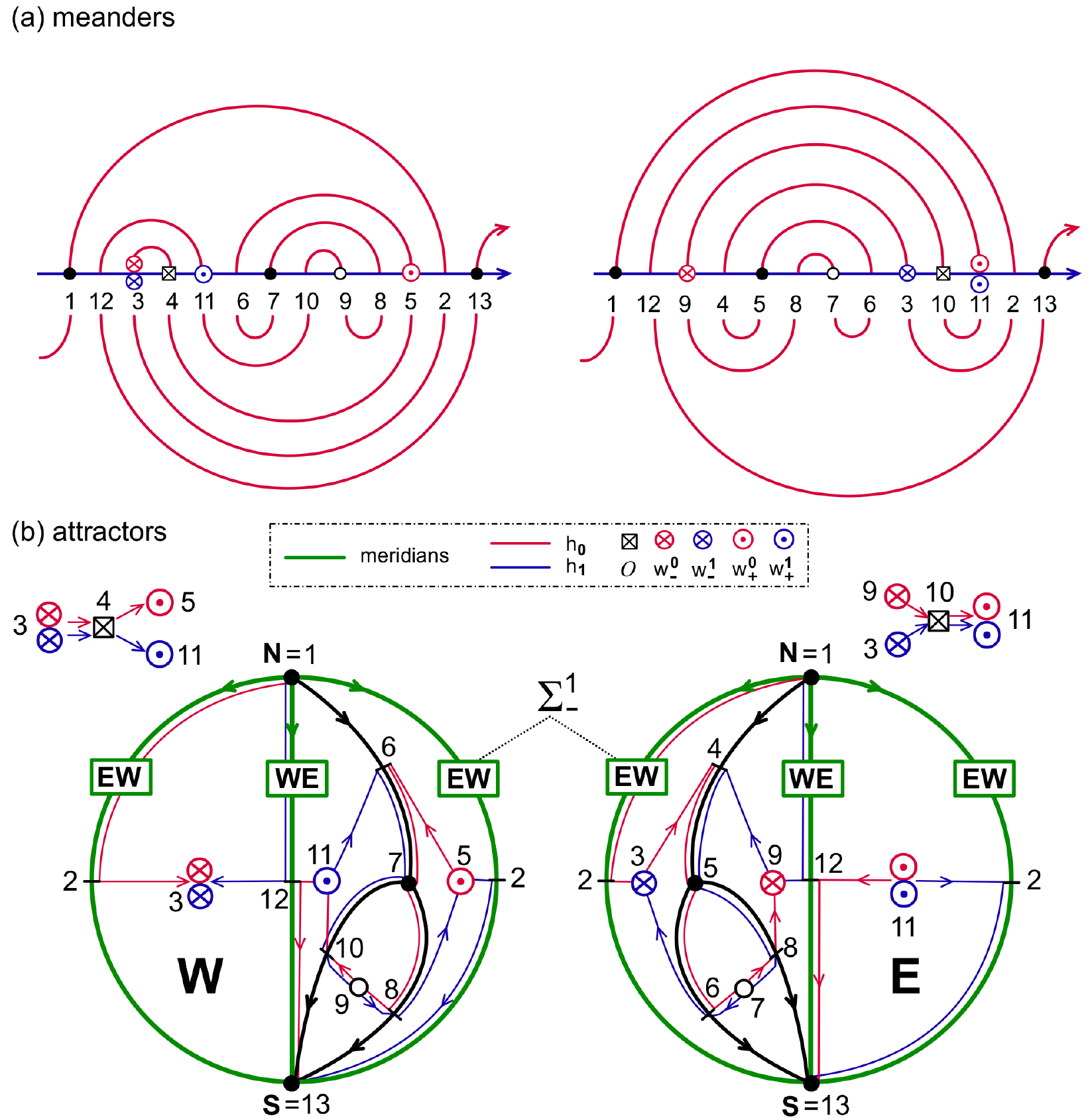}
\caption{\emph{
Two mirror-symmetric 3-ball attractors $\mathcal{A}^+$ (left) and $\mathcal{A}^-$ (right), in (b).
See (a) and \eqref{eq:5.3} for their respective meanders with Sturm permutations $\sigma^+$ (left) and $\sigma^-$ (right).
}}
\label{fig:5.2}
\end{figure}

Let us further illustrate this ambiguity by two Sturm 3-ball attractors $\mathcal{A}^\pm$ which coincide up to a reversal of orientation in $\mathbb{R}^3$.
Their Sturm permutations are
	\begin{equation}
	\begin{aligned}
	\sigma ^+
	&:=  \text{(1 12 3 4 11 6 7 10 9 8 5 2 13)}
	= \text{(2 12) (5 11) (8 10)}\,;\\
	\sigma ^-
	&:=  \text{(1 12 9 4 5 8 7 6 3 10 11 2 13)}
	= \text{(2 12) (3 9) (6 8)}\,.
	\end{aligned}
	\label{eq:5.3}
	\end{equation}
See fig.~\ref{fig:5.2}(a) for the respective meanders, and fig.~\ref{fig:5.2}(b) for the respective signed 2-hemisphere templates.
The templates are mirror symmetric to each other, in $\mathbb{R}^3$, and are also related by a $180^\circ$ rotation around the polar axis.
Note that the involutions $\sigma^\pm$ are not conjugate to each other by any of the trivial equivalences \eqref{eq:2.4}--\eqref{eq:2.9}.
To our knowledge this is the first, and simplest, such example for the closure of a single 3-cell $c_\mathcal{O}$.
Note, however, that neither of the permutations $\sigma^\pm$ can be realized as the Sturm permutation of any nonlinearity $f=f(u)$ which only depends on $u$; see \cite{firowo12}.
Indeed the core equivalent permutation cycles $\text{(5 11), (2 12)}$ in $\sigma^+$, and $\text{(3 9), (2 12)}$ in $\sigma^-$, respectively, are not centered.
A related example involving a 3-ball with an attached meridian disk and a polar spike was suggested, but not worked out, in \cite{wo02}.

We can now formulate the main result of this section.

\begin{thm}\label{thm:5.2}
Let $(h_0,h_1)$ be the SZS-pair associated to the 3-cell template $\mathcal{C}$, according to definition~\ref{def:5.1}.

Then the permutation $\sigma$:= $h_0^{-1} \circ h_1$ defines a Sturm meander $\mathcal{M}$ which is a 3-meander template.
\end{thm}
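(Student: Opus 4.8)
The plan is to push everything through the planar Sturm theory of Section~\ref{sec2}, hemisphere by hemisphere, and then to weld the two resulting planar meanders into a single curve. By Definition~\ref{def:1.2}(iii) and the remark following Definition~\ref{def:5.1}, the closures $\mathrm{clos\,}\mathbf{W}$ and $\mathrm{clos\,}\mathbf{E}$ are Western and Eastern disk complexes in the sense of Definition~\ref{def:2.6}. Hence the restricted SZ-pair in $\mathrm{clos\,}\mathbf{W}$ and the restricted ZS-pair in $\mathrm{clos\,}\mathbf{E}$ each define a genuine planar Sturm meander (Theorem~\ref{thm:2.4}), and by Lemma~\ref{lem:2.7} the $\mathbf{S}$-polar $h_\iota$-serpents of $\mathbf{W}$ and the $\mathbf{N}$-polar $h_\iota$-serpents of $\mathbf{E}$ are full. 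These facts, together with the orderings \eqref{eq:5.4b}, \eqref{eq:5.4c}, are the workhorses for everything below.

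First I would check that $\sigma=h_0^{-1}\circ h_1$ is a dissipative Morse permutation. Dissipativity is immediate from \eqref{eq:5.4b}, \eqref{eq:5.4c}: the poles satisfy $\mathbf{N}=h_0(1)=h_1(1)$ and $\mathbf{S}=h_0(N)=h_1(N)$, so $\sigma(1)=1$ and $\sigma(N)=N$. For the Morse numbers I would record the parity relation forced by \eqref{eq:1.22a}: one step along $h_0$ changes $i_v$ by $\pm1$ while the position $h_0^{-1}(v)$ increases by $1$, so $i_v-h_0^{-1}(v)$ is constant mod~$2$, hence odd throughout (it equals $-1$ at $\mathbf{N}$). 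The $h_0$-order \eqref{eq:5.4b} splits $\mathcal{E}$ into a West block, the single vertex $\mathcal{O}$, and an East block. On the West block the recursion literally reproduces the planar West meander (same prefix, same normalization $i_{\mathbf{N}}=0$), so all values lie in $\{0,1,2\}$; the East block is handled identically by computing from the equivalent normalization $i_{\mathbf{S}}=0$ of \eqref{eq:1.23} and invoking the reflection $u\mapsto-u$ of \eqref{eq:2.4}, which interchanges the two hemispheres. At the junction, $w_-^0=h_0(m)$ is the last West vertex, a source with $i=2$; the parity relation forces $m$ odd, and since $w_-^0<_1\mathcal{O}$ the sign in \eqref{eq:1.22a} is $+1$, giving $i_{\mathcal{O}}=3$, after which the tunnel returns to $i_{w_+^0}=2$. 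This yields $i_v\ge0$ and 3-meander property~(i): a unique index-$3$ vertex $\mathcal{O}$, all others $\le2$.

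Next I would dispatch properties~(ii)--(iv), reducing to one representative case by the trivial equivalences \eqref{eq:2.4}--\eqref{eq:2.9}. Property~(iv)---that the $h_\iota$-neighbors $w_\pm^\iota$ terminate the polar $h_{1-\iota}$-serpents---is precisely the full-serpent statement of Lemma~\ref{lem:2.7} read off in each hemisphere, the full polar serpent of one hemisphere running up to the source adjacent to $\mathcal{O}$ through the tunnel of Definition~\ref{def:5.1}(ii). Property~(iii), that $\mathcal{O}$ lies between the two $h_{1-\iota}$-endpoints of each polar arc, follows from \eqref{eq:5.4b}, \eqref{eq:5.4c}: a polar $h_\iota$-serpent reaches from a pole past $\mathcal{O}$ in the opposite $h_{1-\iota}$-order, so its arc straddles $\mathcal{O}$. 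Property~(ii), the overlap of anti-polar serpents, is the meander translation of the face-overlap condition Definition~\ref{def:1.2}(iv): the full polar serpent of one hemisphere and its anti-polar partner in the other both extend onto the shared meridian, and their ranges there must intersect for exactly the reason the faces $\mathbf{NE},\mathbf{SW}$ overlap along $\mathbf{WE}$.

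The main obstacle is the meander property itself: that $\mathcal{M}$ is a single Jordan curve rather than a disjoint union of closed loops. Local planarity on each hemisphere does not suffice, and the welding must be controlled globally. Non-crossing I would obtain from the planar theory on each hemisphere together with the block structure of \eqref{eq:5.4b}, \eqref{eq:5.4c}, which keeps West arcs and East arcs from interleaving except along the shared meridians, where they match up. The delicate point is connectedness: one must verify that tracing $\mathcal{M}$ from $\mathbf{N}$ visits all $N$ vertices before returning, rather than closing into a proper sub-loop. I would establish this by tracking the side of the axis at each shared meridian vertex and using that each hemisphere meander is itself a single Jordan arc between the poles, spliced to the other through $\mathcal{O}$ by the two $h_0$-arcs $w_-^0\mathcal{O}$ and $\mathcal{O}w_+^0$; these furnish the bridge that joins the West and East arcs into one cycle. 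This global connectedness bookkeeping, rather than any local index or serpent computation, is where I expect the real work to lie.
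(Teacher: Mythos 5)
Your treatment of dissipativity, the Morse property, and template conditions (ii)--(iv) matches the paper's route (order restriction to the planar hemispheres, the parity computation $i_{\mathcal{O}}=2+1=3$ at the tunnel, fullness of the hemisphere serpents from lemma~\ref{lem:2.7}, and the overlap condition of definition~\ref{def:1.2}(iv) for the serpent overlap). The gap is in the meander property, and it is twofold. First, you have misplaced the difficulty: the formal shooting curve $\mathcal{M}$ is \emph{by construction} a single path, traced by following $h_0(1),\dots,h_0(N)$ and switching sides of the axis at each vertex, so it can never decompose into a disjoint union of sub-loops. There is no connectedness to verify. The only failure mode is self-intersection, which occurs exactly when two upper arcs (or two lower arcs) interleave in the $h_1$-order, i.e.\ $\xi_1<_1\xi_2<_1\eta_1<_1\eta_2$. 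That non-crossing statement is precisely what the paper's lemma~\ref{lem:5.7} proves, and it is the real work of the theorem.

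Second, the argument you offer for non-crossing --- that the block structure of \eqref{eq:5.4b}, \eqref{eq:5.4c} ``keeps West arcs and East arcs from interleaving except along the shared meridians, where they match up'' --- does not go through, because the meridians sit in \emph{opposite} blocks for $h_0$ and $h_1$: by \eqref{eq:5.4b} one has $\mathbf{EW}<_0\mathcal{O}$ while by \eqref{eq:5.4c} one has $\mathbf{EW}>_1\mathcal{O}$, and symmetrically for $\mathbf{WE}$. Consequently an upper $h_0$-arc lying entirely in $\mathrm{clos\,}\mathbf{W}$ can have its two endpoints on opposite sides of $\mathcal{O}$ in the $h_1$-order, and planar correctness of each hemisphere meander separately says nothing about its interaction with arcs from the other hemisphere or with the two arcs at $\mathcal{O}$. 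Ruling out these cross-hemisphere conflicts is exactly the content of the case analysis \eqref{eq:5.23}--\eqref{eq:5.25} together with the postponed cases \eqref{eq:5.22}; it requires proposition~\ref{prop:5.8} (an upper arc of type $1$ or $2$ starting in $\mathbf{W}$ ends in $\mathbf{W}$), the serpent overlap of lemma~\ref{lem:5.4} --- hence, ultimately, the meridian overlap hypothesis of definition~\ref{def:1.2}(iv) --- and the type classification of upper arcs in fig.~\ref{fig:5.3}(b). None of this is supplied or replaced by your sketch, so the central claim of the theorem remains unproved.
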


Throughout the proof of theorem~\ref{thm:5.2}, below, we assume $(h_0,h_1)$ to be the SZS-pair of $\mathcal{C}$.
It is helpful to consult and compare fig.~\ref{fig:5.1} of the SZS -pair $(h_0,h_1)$ for the general 3-cell template $\mathcal{C}$ with fig.~\ref{fig:1.3} of the general 3-meander template for the ``meander'' $\mathcal{M}$ which $\sigma=h_0^{-1} \circ h_1$ is claimed to define.
We use identical notation for corresponding equilibria, in these two figures.

Trivially, $\sigma$ is dissipative in the sense of \eqref{eq:1.20}.
Indeed both paths $h_\iota$ start at the pole $\mathbf{N} = h_\iota (1)$ and terminate at the opposite pole $\mathbf{S} = h_\iota(N)$.
Hence $\sigma(1) =1$ and $\sigma(N) =N$, as required in \eqref{eq:1.20}.

A central element in the proof of theorem~\ref{thm:5.2} is the following trivial consequence of definition~\ref{def:5.1}(i).
The ordering of equilibria defined by the restriction of both $h_\iota$ to $\mathrm{clos\,} \mathbf{W}$ coincides with the ordering of the defining SZ-pair $(h_0,h_1)$ on that closed hemisphere.
The same statement applies to the ZS-restriction of $(h_0,h_1)$ to $\mathrm{clos\,} \mathbf{E}$, of course.
We call this elementary observation the property of \emph{order restriction}.
Lemmata~\ref{lem:5.3}--\ref{lem:5.7} all assume $(h_0,h_1)$ to be the SZS-pair of the 3-cell template $\mathcal{C}$ with associated permutation $\sigma$:= $h_0^{-1} \circ h_1 \in S_N$.

\begin{lem}\label{lem:5.3}
The permutation $\sigma$ is Morse, with Morse numbers
	\begin{align}
	0 \leq\,  &i_v \leq 2\,, \quad \text{for all} \quad
	v \in \mathcal{E} \smallsetminus \lbrace \mathcal{O} \rbrace\,;
	\label{eq:5.5a}\\
	&i_\mathcal{O} = 3\,.
	\label{eq:5.5b}
	\end{align}
\end{lem}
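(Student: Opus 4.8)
The plan is to show that the Morse numbers $i_v$ produced by the recursion \eqref{eq:1.22a}, normalized by $i_{\mathbf{N}}=0$, coincide with the cell dimensions $\dim c_v$, which equal $0,1,2$ for the vertices, edges and faces of $\mathcal{C}$ and equal $3$ for the single $3$-cell $\mathcal{O}$. Since \eqref{eq:1.22a} determines all Morse numbers uniquely from the one normalizing value, it suffices to verify that the function $v\mapsto\dim c_v$ itself satisfies \eqref{eq:1.22a} for every $h_0$-consecutive pair $v=h_0(m),\ \tilde v=h_0(m+1)$. By the vertex ordering \eqref{eq:5.4b}, the path $h_0$ first traverses $\mathbf{N}\cup\mathbf{EW}\cup\mathbf{W}$, ending at the source $w_-^0=h_0(m_0)$, then passes through $\mathcal{O}=h_0(m_0+1)$, and then through $w_+^0$ together with the remainder of $\mathbf{E}\cup\mathbf{WE}\cup\mathbf{S}$. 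Accordingly I split the consecutive pairs into three groups: those internal to the Western initial segment, the two transition steps $w_-^0\to\mathcal{O}\to w_+^0$ across the $3$-cell, and those internal to the Eastern final segment.

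For the two hemisphere segments I would invoke the planar theory. By theorem~\ref{thm:2.4}, together with theorem~\ref{thm:2.1}, the SZ-restriction of $(h_0,h_1)$ to $\mathrm{clos\,}\mathbf{W}$ and the ZS-restriction to $\mathrm{clos\,}\mathbf{E}$ each define a planar Sturm meander whose Morse numbers equal the respective cell dimensions; in particular each is Morse and obeys the parity relation $i_{h_0(m)}\equiv m+1\pmod 2$, which is immediate from \eqref{eq:1.22a} and $i_{\mathbf{N}}=0$. By the \emph{order restriction} property, the relative $h_0$- and $h_1$-orders of the vertices of each closed hemisphere agree with those induced by the full pair $(h_0,h_1)$. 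For the Western initial segment the full positions coincide with the hemisphere positions, since in both meanders $\mathbf{N}\cup\mathbf{EW}\cup\mathbf{W}$ is exactly the initial block ending at $w_-^0$ (see the discussion following definition~\ref{def:5.1} and lemma~\ref{lem:2.7}); hence \eqref{eq:1.22a} for $v\mapsto\dim c_v$ holds there verbatim.

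The delicate point — and the step I expect to be the main obstacle — is transferring the recursion to the Eastern segment, where the full $h_0$-positions are shifted relative to the hemisphere positions while the sign $(-1)^{m+1}$ in \eqref{eq:1.22a} is parity-sensitive. I would control this by the parity relation. Since $w_-^0$ is a source, $\dim c_{w_-^0}=2$ is even, so its Western position $m_0$ is odd; since $w_+^0$ is the first vertex of the Eastern block, at hemisphere position $k+1$ with $k=|\mathbf{N}\cup\mathbf{EW}|$, evenness of $\dim c_{w_+^0}=2$ forces $k$ even. The full $h_0$-position of each Eastern vertex exceeds its hemisphere position by the constant $m_0+1-k$, which is even, so the two parities of $m$ agree vertex by vertex. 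Combined with order restriction, the sign in \eqref{eq:1.22a} then matches, and the recursion transfers to the Eastern final segment as well.

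It remains to verify the two transition steps directly from \eqref{eq:5.4c}. Since $w_-^0\in\mathbf{W}$ lies $<_1$-before $\mathcal{O}$ the relevant sign is $+1$, and $m_0$ odd gives $(-1)^{m_0+1}=+1$, so \eqref{eq:1.22a} yields $i_{\mathcal{O}}=\dim c_{w_-^0}+1=3$. Likewise $\mathcal{O}$ lies $<_1$-before $w_+^0\in\mathbf{E}$, again with sign $+1$, while now $(-1)^{(m_0+1)+1}=-1$, so $i_{w_+^0}=3-1=2=\dim c_{w_+^0}$. Thus $v\mapsto\dim c_v$ satisfies \eqref{eq:1.22a} at every step, hence equals $i_v$; this gives \eqref{eq:5.5a} for $v\neq\mathcal{O}$ and \eqref{eq:5.5b} for $\mathcal{O}$, and in particular shows that $\sigma$ is Morse.
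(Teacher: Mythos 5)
Your proof is correct, and it reaches the same conclusion by a mildly but genuinely different route. Both you and the paper rest on the same pillars: the planar theory of section~\ref{sec2} applied to the hemisphere restrictions of $(h_0,h_1)$, the order-restriction property, the parity relation $i_{h_0(m)}\equiv m+1 \pmod 2$ forced by \eqref{eq:1.22a}, and the single recursion step $w_-^0\to\mathcal{O}$ (with $w_-^0<_1\mathcal{O}$ and $m_0$ odd) to get $i_\mathcal{O}=3$. The difference is how the part of $\mathcal{E}$ traversed \emph{after} $\mathcal{O}$ is handled. The paper covers $\mathcal{E}\smallsetminus\{\mathcal{O}\}$ by four overlapping initial segments, obtained from the initial $h_0$- and $h_1$-segments via the trivial equivalences $x\mapsto 1-x$ and $u\mapsto -u$; the latter effectively re-normalizes the recursion at $\mathbf{S}$ (legitimate by \eqref{eq:1.23}), so that $\mathbf{S}\cup\mathbf{EW}\cup\mathbf{E}$ and $\mathbf{S}\cup\mathbf{WE}\cup\mathbf{E}$ again become initial segments where order restriction applies verbatim. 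You instead sweep the full $h_0$-path once, transferring the recursion into the Eastern block by the explicit parity bookkeeping $m_0$ odd, $k=|\mathbf{N}\cup\mathbf{EW}|$ even, hence even position shift $m_0+1-k$; this also yields the sharper statement $i_v=\dim c_v$ rather than only the bounds \eqref{eq:5.5a}. Your version is more computational but self-contained (no appeal to the Klein 4-group of trivial equivalences); the paper's version is shorter at the price of invoking the equivalence of the two normalizations in \eqref{eq:1.23}. Both arguments are sound.
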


\begin{proof}[\textbf{Proof.}]
In our discussion of definition~\ref{def:5.1} we have already observed how the path $h_0$: $\lbrace 1, \ldots ,$ $N \rbrace \rightarrow \mathcal{E}$ traverses all equilibria $v$ such that
	\begin{equation}
	v \in \mathbf{N} \cup \mathbf{EW} \cup \mathbf{W} 
	\subseteq \mathrm{clos\,} \mathbf{W}
	\label{eq:5.6}
	\end{equation}
before arriving at $\mathcal{O}$ from $w_-^0$.
See \eqref{eq:5.4b}.
By recursion \eqref{eq:1.22a} along $h_0$, order restriction to the planar cell complex $\mathrm{clos\,} \mathbf{W}$ shows $i_v \leq 2$ for all $v$ of \eqref{eq:5.6}; see section~\ref{sec2}.
Interchanging the roles of $h_0$ and $h_1$, by the trivial equivalence $x \mapsto 1-x$, the same statement holds true for all $v$ traversed by $h_1$ before $\mathcal{O}$, i.e. for all
	\begin{equation}
	v \in \mathbf{N} \cup \mathbf{WE} \cup \mathbf{W}\,.
	\label{eq:5.7}
	\end{equation}
Together, \eqref{eq:5.6} and \eqref{eq:5.7} prove claim \eqref{eq:5.5a} in $\mathrm{clos\,}\mathbf{W} \smallsetminus \mathbf{S}$.
The trivial equivalence $u \mapsto -u$, similarly, allows us to consider claim \eqref{eq:5.5a} proved for all 
	\begin{equation}
	v \in \mathbf{S} \cup \mathbf{EW} \cup \mathbf{E}\,.
	\label{eq:5.9}
	\end{equation}

It only remains to determine $i_\mathcal{O}$.
By recursion \eqref{eq:1.22a} and the normalization $i_\mathbf{N} = i_{h_0(1)} =0$, the even/odd parities of $j$ and $i_{h_0(j)}$ are opposite, for all $j$.
In particular $w_-^0 = h_0(2r-1)$, for some integer $r$, because $w_-^0 \in \mathbf{W}$ implies $i_{w_-^0}=i(w_-^0) =2$ for the face barycenter $w_-^0$, by \eqref{eq:5.6}.
By definition, $\mathcal{O} = h_0(2r)$.
Hence \eqref{eq:1.22a} implies
	\begin{equation}
	i_\mathcal{O} = i_{w_-^0} + 
	\text{sign }(h_1^{-1} (\mathcal{O})-h_1^{-1}(w_-^0)) 
	= 2+1 = 3\,.
	\label{eq:5.8}
	\end{equation}
Indeed $h_1$ traverses $w_-^0 \in \mathbf{W}$ (nonstrictly) $h_1$-before $w_-^1 \in \mathbf{W}$, and hence strictly $h_1$-before $\mathcal{O}$, by definition~\ref{def:5.1}.
This proves claim \eqref{eq:5.5b}, and the lemma. 
\end{proof}

\begin{lem}\label{lem:5.4}
Polar $h_\iota$-serpents overlap with the anti-polar $h_{1-\iota}$-serpents.
\end{lem}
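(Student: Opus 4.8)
The plan is to reduce to a single representative anti-polar pair and then convert the face--overlap axiom definition~\ref{def:1.2}(iv) of the $3$-cell template into a serpent overlap on a shared meridian. By the trivial equivalences of definition~\ref{def:2.3}, generated by $u \mapsto -u$ and $x \mapsto 1-x$, it suffices to treat one pair, say the $\mathbf{N}$-polar $h_0$-serpent against the $\mathbf{S}$-polar $h_1$-serpent: the reflection $u \mapsto -u$ interchanges the poles $\mathbf{N} \leftrightarrow \mathbf{S}$ and hence yields the complementary pair, while $x \mapsto 1-x$ swaps the roles of $h_0$ and $h_1$. First I would locate both serpents on a common meridian. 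Since the $\mathbf{N}$-polar $h_0$-serpent starts at $\mathbf{N} = h_0(1)$ and the $\mathbf{S}$-polar $h_1$-serpent terminates at $\mathbf{S} = h_1(N)$, the vertex orderings \eqref{eq:5.4b}, \eqref{eq:5.4c} confine the former to the $h_0$-initial block $\mathbf{N} \cup \mathbf{EW} \cup \mathbf{W}$ and the latter to the $h_1$-final block $\mathbf{E} \cup \mathbf{EW} \cup \mathbf{S}$. Their only possible common ground is therefore the shared meridian $\mathbf{EW} = \Sigma_-^1(\mathcal{O})$, on which both serpents, being subsets of the totally ordered meridian, are arcs emanating from $\mathbf{N}$ and $\mathbf{S}$ respectively.

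The core step is to pin down the terminal vertices of these two arcs in terms of the polar faces of definition~\ref{def:1.2}(iv): let $\mathbf{NW}$ be the unique $\mathbf{W}$-face adjacent to the first $\mathbf{EW}$-edge at $\mathbf{N}$, and $\mathbf{SE}$ the unique $\mathbf{E}$-face adjacent to the last $\mathbf{EW}$-edge at $\mathbf{S}$. By the order restriction property, the $\mathbf{N}$-polar $h_0$-serpent agrees with the corresponding serpent of the isolated Western disk $\mathrm{clos\,}\mathbf{W}$, where $(h_0,h_1)$ is an SZ-pair; near $\mathbf{N}$ the diversion of $h_0$ off $\mathbf{EW}$ can only occur into a Western face, the first of which is $\mathbf{NW}$. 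I would invoke the planar traversal rule of definition~\ref{def:2.2} (the ``S''-shape of the path through a face) to show that $h_0$ runs along the \emph{entire} boundary arc $\partial\mathbf{NW} \cap \mathbf{EW}$ before entering the interior source of $\mathbf{NW}$; since a serpent is always succeeded by a source and, under the Morse recursion \eqref{eq:1.22a}, a source can only follow a saddle, the serpent ends precisely at the last $\mathbf{EW}$-saddle of $\partial\mathbf{NW}$, carried by the last $\mathbf{EW}$-edge $c_{s_1}$ lying on $\partial\mathbf{NW}$. The mirror argument, either via $u \mapsto -u$ or directly in the Eastern disk $\mathrm{clos\,}\mathbf{E}$ with its ZS-pair, shows that the $\mathbf{S}$-polar $h_1$-serpent reaches, from the $\mathbf{S}$-side, exactly back to the first $\mathbf{EW}$-edge $c_{s_2}$ lying on $\partial\mathbf{SE}$.

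With the two endpoints identified, the overlap falls out of the template axiom. Definition~\ref{def:1.2}(iv) provides a shared edge $c_e$ of $\mathbf{EW}$ lying on both $\partial\mathbf{NW}$ and $\partial\mathbf{SE}$. As an $\mathbf{EW}$-edge of $\partial\mathbf{NW}$, the edge $c_e$ cannot exceed the last such edge $c_{s_1}$ in the $\mathbf{N}\to\mathbf{S}$ bipolar order of the meridian; as an $\mathbf{EW}$-edge of $\partial\mathbf{SE}$, it cannot precede the first such edge $c_{s_2}$. Hence $c_{s_2} \preceq c_e \preceq c_{s_1}$ along the directed meridian, so the saddle carried by $c_e$ lies simultaneously in the $\mathbf{N}$-polar $h_0$-serpent (the arc from $\mathbf{N}$ up to $s_1$) and in the $\mathbf{S}$-polar $h_1$-serpent (the arc from $s_2$ down to $\mathbf{S}$). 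This is the required nonempty intersection, and $u \mapsto -u$ then yields the anti-polar counterpart on $\mathbf{WE}$.

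I expect the endpoint identification of the second paragraph to be the main obstacle. One must argue, purely within the abstract planar hemispheres and \emph{without} recourse to the Jordan property of $\mathcal{M}$ (which is only established later, in lemma~\ref{lem:5.7}), that the polar serpent exhausts the full meridian boundary arc of the first diverting face instead of turning off at an earlier saddle. This rests on the S/Z traversal conventions of definition~\ref{def:2.2} together with the fullness statement of lemma~\ref{lem:2.7}, and it requires the technical point that $\partial\mathbf{NW}$, and likewise $\partial\mathbf{SE}$, meets $\mathbf{EW}$ in a single connected arc, for which I would appeal to the boundary-adjacency result \cite[lemma~2.1]{firo09}. Once this is secured, the apparent off-by-one between the serpent ends and the face-boundary arcs is exactly absorbed, so that the single shared edge guaranteed by definition~\ref{def:1.2}(iv) already forces the overlap.
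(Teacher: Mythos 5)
Your proposal is correct and follows essentially the same route as the paper's proof of lemma~\ref{lem:5.4}: reduce by trivial equivalences to the $\mathbf{N}$-polar $h_0$-serpent versus the $\mathbf{S}$-polar $h_1$-serpent, identify both as arcs of the meridian $\mathbf{EW}$ terminating at the entry/exit saddles of the faces $\mathbf{NW}=c_{w_-^1}$ and $\mathbf{SE}=c_{w_+^0}$, and extract the overlap (the paper's $\mu\le\mu'$, your $c_{s_2}\preceq c_e\preceq c_{s_1}$) from the shared-edge axiom of definition~\ref{def:1.2}(iv). The technical point you flag as the main obstacle -- that the serpent exhausts the full meridian boundary arc of the first diverting face, via the traversal rules of definition~\ref{def:2.2} and lemma~\ref{lem:2.7} -- is precisely what the paper's terse identification of the serpents as $v_-^0\ldots v_-^{\mu'}$ and $v_-^{\mu}\ldots v_-^{2m}$ encodes.
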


\begin{proof}[\textbf{Proof.}]
By trivial equivalences \eqref{eq:2.4}--\eqref{eq:2.9} it is sufficient to establish the overlap, i.e. the nonempty intersection, of the $\mathbf{N}$-polar $h_0$-serpent with the anti-polar, i.e. $\mathbf{S}$-polar, $h_1$-serpent.
In the closed Western hemisphere $\mathrm{clos\,}\mathbf{W}$, the $\mathbf{N}$-polar serpent $h_0$ is easily identified.
See fig.~\ref{fig:5.1}.
Indeed the S-path $h_0$ in $\mathrm{clos\,} \mathbf{W}$ starts with $\mathbf{N}= v_-^0$ as $v_-^0 v_-^1 \ldots v_-^{\mu'} w_-^1 \ldots $, first leaving the meridian $\mathbf{EW}$ from $v_-^{\mu'}$ to the face barycenter $w_-^1$.
Since $i \leq 1$ along the meridian, in general, and along the left boundary of the face $\mathbf{NW} = c_{w_-^1}$, in particular, the $\mathbf{N}$-polar $h_0$-serpent is
	\begin{equation}
	h_0:\quad v_-^0v_-^1 \ldots v_-^{\mu'}\,.
	\label{eq:5.10}
	\end{equation}
For the Z-path $h_1$ in $\mathrm{clos\,} \mathbf{E}$, on the other hand, we obtain the termination sequence $h_1$: $\ldots w_+^0v_-^\mu \ldots$ $v_-^{2m-1}v_-^{2m}$ to $v_-^{2m} = \mathbf{S}$, with $\mathbf{S}$-polar $h_1$-serpent
	\begin{equation}
	h_1: \quad v_-^\mu \ldots v_-^{2m-1} v_-^{2m}\,.
	\label{eq:5.11}
	\end{equation}
The meridian overlap condition (iv) for the boundaries of the faces $\mathbf{NW} = c_-^1$ and $\mathbf{SE} = c_+^0$ in definition~\ref{def:1.2} of a 3-cell template $\mathcal{C}$ implies $\mu \leq \mu'$ and hence nonempty overlap	
	\begin{equation}
	v_-^\mu \ldots v_-^{\mu'}
	\label{eq:5.12}
	\end{equation}
of the anti-polar serpents \eqref{eq:5.10} and \eqref{eq:5.11}.
This proves the lemma.
\end{proof}

\begin{lem}\label{lem:5.5}
The meander intersection $v=\mathcal{O}$ is located between the two intersection points, in the order of $h_1$, of the polar arc of any polar $h_0$-serpent. 
See the polar arcs $v_\pm^0 v_\pm^1$ in fig. \ref{fig:1.3}. 
The same statement holds true with $h_0$ and $h_1$ interchanged.
\end{lem}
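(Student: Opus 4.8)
The plan is to read the claim directly off the vertex orderings \eqref{eq:5.4b}, \eqref{eq:5.4c} that the SZS-pair induces on poles, meridians, and hemispheres, using the notation $<_\iota$ of \eqref{eq:5.4a}. By the trivial equivalences of definition~\ref{def:2.3} it suffices to treat the single case of the $\mathbf{N}$-polar $h_0$-serpent, with its two axis-intersections ordered by $h_1$. The $\mathbf{S}$-polar $h_0$-serpent then follows from the $u \mapsto -u$ flip \eqref{eq:2.4}--\eqref{eq:2.6}, which interchanges $\mathbf{N} \leftrightarrow \mathbf{S}$ and $\mathbf{EW} \leftrightarrow \mathbf{WE}$, reverses both $h_\iota$-orders, and hence preserves strict betweenness; the two $h_1$-serpent cases follow from the $x \mapsto 1-x$ flip \eqref{eq:2.7}--\eqref{eq:2.9}, which interchanges $h_0 \leftrightarrow h_1$ together with $\mathbf{EW} \leftrightarrow \mathbf{WE}$.

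First I would identify the polar arc. Since $\mathbf{N} = h_0(1)$, the polar arc of the $\mathbf{N}$-polar $h_0$-serpent is the first meander arc $v_-^0 v_-^1$, joining $v_-^0 = \mathbf{N} = h_0(1)$ to its immediate $h_0$-successor $v_-^1 = h_0(2)$. The serpent description \eqref{eq:5.10} in the proof of lemma~\ref{lem:5.4} locates this serpent along the meridian $\mathbf{EW}$; in particular $v_-^1 \in \mathbf{EW}$. This membership is the crucial geometric input: were $v_-^1$ instead to lie in the open hemisphere $\mathbf{W}$, it would fall on the same side of $\mathcal{O}$ as $\mathbf{N}$ in the $h_1$-order, and the asserted betweenness would fail.

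Then I would simply invoke \eqref{eq:5.4c}, namely $\mathbf{N} \cup \mathbf{WE} \cup \mathbf{W} <_1 \mathcal{O} <_1 \mathbf{E} \cup \mathbf{EW} \cup \mathbf{S}$. Here $v_-^0 = \mathbf{N}$ lies in the left block, so $\mathbf{N} <_1 \mathcal{O}$, whereas $v_-^1 \in \mathbf{EW}$ lies in the right block, so $\mathcal{O} <_1 v_-^1$. Thus $\mathbf{N} <_1 \mathcal{O} <_1 v_-^1$, i.e. $\mathcal{O}$ sits strictly between the two axis-intersections $v_-^0, v_-^1$ of the polar arc in the order of $h_1$, which is the assertion. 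The three remaining cases are obtained by the flips above, or equally directly: the $\mathbf{S}$-polar $h_0$-serpent runs along $\mathbf{WE}$, so its non-pole endpoint lies in the left block of \eqref{eq:5.4c} while $\mathbf{S}$ lies in the right block, and the two $h_1$-serpents are handled verbatim through \eqref{eq:5.4b} with the roles of $\mathbf{EW}$ and $\mathbf{WE}$ exchanged.

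The step I expect to be the only real obstacle is not this short deduction but pinning down the one geometric fact on which it rests, namely that the non-pole endpoint of the polar arc lands on the correct meridian $\mathbf{EW}$ rather than in the hemisphere interior or on the opposite meridian $\mathbf{WE}$. This is precisely what the serpent analysis behind lemma~\ref{lem:5.4} supplies, via the fullness criterion of lemma~\ref{lem:2.7} and the Western/Eastern edge orientations of definition~\ref{def:2.6}. I would therefore cite \eqref{eq:5.10} and its $\mathbf{WE}$-counterpart rather than re-derive them, and then verify only that each trivial equivalence transports the triple consisting of the pole, the meridian endpoint, and $\mathcal{O}$, together with its strict betweenness, faithfully onto the next case.
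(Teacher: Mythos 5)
Your proof is correct and follows essentially the same route as the paper: reduction to the $\mathbf{N}$-polar $h_0$-serpent by the trivial equivalences, identification of the polar arc as running from $\mathbf{N}=h_0(1)$ to $v_-^1=h_0(2)\in\mathbf{EW}$ via the serpent description \eqref{eq:5.10}, and then reading off $\mathbf{N} <_1 \mathcal{O} <_1 v_-^1$. The only (harmless) difference is the last step: the paper derives $h_1^{-1}(v_-^1)>h_1^{-1}(\mathcal{O})+1$ directly from the tunneling sequence \eqref{eq:5.14} and order restriction on $\mathrm{clos\,}\mathbf{W}$, whereas you invoke the block ordering \eqref{eq:5.4c} together with the meridian membership $v_-^1\in\mathbf{EW}$ --- both rest on definition~\ref{def:5.1}(ii), so this is a repackaging rather than a new argument.
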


\begin{proof}[\textbf{Proof.}]
Again we invoke the trivial equivalences \eqref{eq:2.4}--\eqref{eq:2.9} to consider the polar arc from $\mathbf{N} = v_-^0 = h_0^{-1}(1)$ to $h_0^{-1}(2) = v_-^1$ of the $\mathbf{N}$-polar $h_0$-serpent \eqref{eq:5.10}, only, without loss of generality.
We have to show
	\begin{equation}
	1 = h_1^{-1}(v_-^0) < h_1^{-1}(\mathcal{O})
	< h_1^{-1}(v_-^1)\,;
	\label{eq:5.13}
	\end{equation}
see \eqref{eq:5.4a}.
Because $\mathcal{O} \neq \mathbf{N}$, the left inequality is trivial.
To show the right inequality we first note that the $i=1$ saddle $v_-^1$ is the immediate $h_1$-successor of $w_-^1$ along the Z-path $h_1$ on $\mathrm{clos\,} \mathbf{W}$, in the restricted order of $h_1$ on that closed hemisphere.
However, the actual path $h_1$ tunnels to the Eastern hemisphere 
	\begin{equation}
	h_1: \quad \ldots w_-^1 \mathcal{O} w_+^1 \ldots v_-^1 \ldots
	\label{eq:5.14}
	\end{equation}
immediately after $w_-^1$, by definition~\ref{def:5.1}(ii).
By the restricted order in $\mathrm{clos\,} \mathbf{W}$, however, this still implies $h_1^{-1} (v_-^1)>h_1^{-1}(w_+^1)=h_1^{-1}(\mathcal{O})+1$.
This proves the lemma.
\end{proof}

\begin{lem}\label{lem:5.6}
The $h_\iota$-neighbors $w_\pm^\iota$ of $\mathcal{O}$ are the $h_{1-\iota}$-extreme $i=2$ sources.
\end{lem}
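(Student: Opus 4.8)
The plan is to reduce, via the trivial equivalences of definition~\ref{def:2.3}, to the single assertion that the $h_0$-neighbor $w_-^0$ of $\mathcal{O}$ is the $h_1$-first (equivalently $h_1$-minimal) $i=2$ source of the whole complex. Indeed, the reflection $u \mapsto -u$ of \eqref{eq:2.4} interchanges $\mathbf{N} \leftrightarrow \mathbf{S}$ and $w_-^0 \leftrightarrow w_+^0$ while reversing the $h_1$-order, so it upgrades ``$w_-^0$ is $h_1$-first'' to ``$w_+^0$ is $h_1$-last''; together these say that $w_\pm^0$ are precisely the two $h_1$-extreme sources, which is the case $\iota=0$. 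The flip $x \mapsto 1-x$ of \eqref{eq:2.8} then interchanges $h_0 \leftrightarrow h_1$, $\iota = 0 \leftrightarrow 1$, and $\mathbf{EW} \leftrightarrow \mathbf{WE}$, yielding the remaining claim that $w_\pm^1$ are the $h_0$-extreme sources. Hence a single case suffices.

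First I would observe that every $i=2$ source is the barycenter of a $2$-cell face, and therefore lies in one of the open hemispheres $\mathbf{W}$ or $\mathbf{E}$, since the meridians and poles carry only edges and vertices. The vertex ordering \eqref{eq:5.4c} places all of $\mathbf{W}$ $h_1$-below $\mathcal{O}$ and all of $\mathbf{E}$ $h_1$-above $\mathcal{O}$, so every $\mathbf{W}$-source $h_1$-precedes every $\mathbf{E}$-source. As $\mathbf{N} = h_1(1)$ is a sink and the only vertices $h_1$-before $\mathcal{O}$ lie in $\mathbf{N} \cup \mathbf{WE} \cup \mathbf{W}$, the $h_1$-first source of the entire complex must be the $h_1$-first source lying in $\mathbf{W}$. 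By the property of order restriction attached to definition~\ref{def:5.1}, the relative $h_1$-order of the $\mathbf{W}$-vertices coincides with the order induced by the restricted SZ-pair on the Western disk $\mathrm{clos\,}\mathbf{W}$; it therefore suffices to identify the $h_1$-first source of that planar disk.

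For this last identification I would invoke the serpent analysis already carried out in the proof of lemma~\ref{lem:5.4}. There the $\mathbf{N}$-polar $h_0$-serpent of $\mathrm{clos\,}\mathbf{W}$ is shown to run along the $\mathbf{EW}$-meridian and to terminate precisely when $h_0$ first enters a face, namely the $\mathbf{NW}$-face $c_{w_-^1}$ of \eqref{eq:4.27}, edge-adjacent to $\mathbf{EW}$ at $\mathbf{N}$; equivalently, $w_-^1$ is the $h_0$-first source in $\mathbf{W}$. Applying the flip $x \mapsto 1-x$ inside the Western disk converts this into the statement that the $\mathbf{N}$-polar $h_1$-serpent runs along $\mathbf{WE}$ and is terminated by the $h_1$-first source $w_-^0$, the barycenter of the $\mathbf{NE}$-face edge-adjacent to $\mathbf{WE}$ at $\mathbf{N}$. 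Combined with the previous paragraph, $w_-^0$ is then the $h_1$-first source of the complex, which closes the reduced case and hence the lemma. The main obstacle is purely organizational: keeping the correspondences $h_0 \leftrightarrow h_1$, $\iota = 0 \leftrightarrow 1$, and $\mathbf{EW} \leftrightarrow \mathbf{WE}$ consistent under the two involutions and with the orientation data of definition~\ref{def:1.2}(iii), and verifying that the restricted planar order on each closed hemisphere genuinely agrees with the global $h_\iota$-order, which is exactly where order restriction and the full-polar-serpent criterion of lemma~\ref{lem:2.7} carry the weight.
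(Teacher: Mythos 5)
Your proposal is correct and follows essentially the same route as the paper: reduce by the trivial equivalences to a single case, then read off from the $\mathbf{N}$-polar serpent analysis of lemma~\ref{lem:5.4} that the path runs along a meridian (where $i\leq 1$) until it first enters a face, whose barycenter is therefore the first source. The only cosmetic difference is that you reduce to ``$w_-^0$ is $h_1$-first'' and then flip $x\mapsto 1-x$ once more to match lemma~\ref{lem:5.4}'s $h_0$-statement, whereas the paper reduces directly to ``$w_-^1$ is $h_0$-first''; your explicit remark that all sources lie in open hemispheres and that \eqref{eq:5.4c} makes the $\mathbf{W}$-first source globally first is a detail the paper leaves implicit.
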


\begin{proof}[\textbf{Proof.}]
By trivial equivalences \eqref{eq:2.4}--\eqref{eq:2.9} it is sufficient to prove that $w_-^1 \in \mathbf{W}$ is the $h_0$-first $i=2$ source in $\mathbf{W}$.
This follows as in the proof of lemma~\ref{lem:5.4} where we observed the starting sequence
	\begin{equation}
	h_0: \quad v_-^0 v_-^1 \ldots v_-^{\mu'} w_-^1 \ldots
	\label{eq:5.15}
	\end{equation}
in our discussion of the $\mathbf{N}$-polar $h_0$-serpent \eqref{eq:5.10} from $\mathbf{N} =v_-^0$ to $v_-^{\mu'}$.
Because $i \leq 1$ on serpents, this shows that $w_-^1$ is indeed the $h_0$-first $i=2$ source -- and the lemma is proved.
\end{proof}

\begin{figure}[t!]
\centering \includegraphics[width=\textwidth]{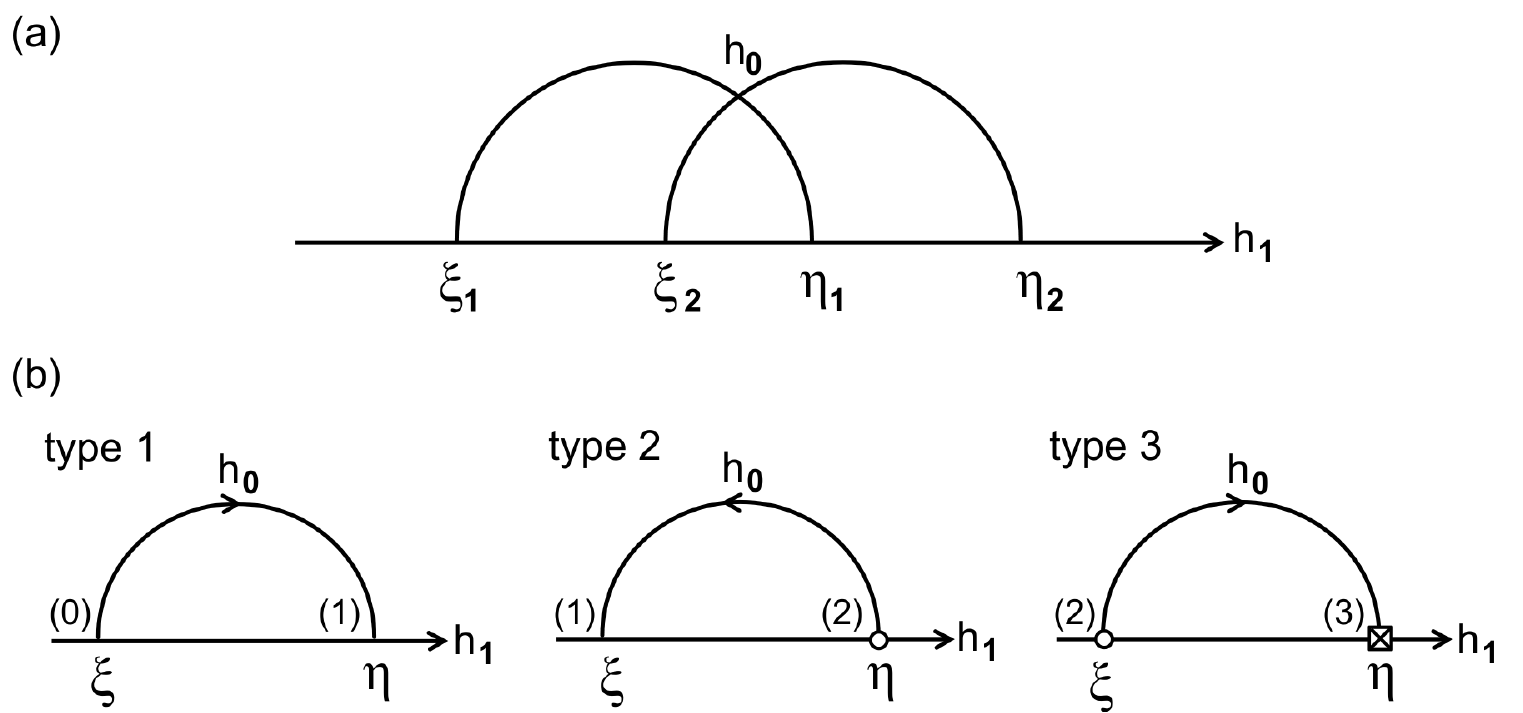}
\caption{\emph{
Upper arcs of $h_0$ between $h_0$-adjacent vertices $\xi <_1 \eta$, in the $h_1$-order;
(a) conflicting arcs $(\xi_1, \eta_1)$ and $(\xi_2, \eta_2)$ contradict the meander property of $\sigma$:= $h_0^{-1} \circ h_1$;
(b) the three types of upper arcs for SZS-pairs $(h_0,h_1)$ associated to 3-cell templates $\mathcal{C}$:
 type $i$ is characterized by $\text{max } \lbrace i_\xi , i_\eta \rbrace = i = 1,2,3$.
Note the orientation of $h_0$ induced by opposite even/odd parities of $\xi \equiv i_\eta \not\equiv i_\xi \equiv \eta \pmod {2}$.
 Morse numbers are indicated in parentheses and by vertex symbols.
}}
\label{fig:5.3}
\end{figure}

\begin{lem}\label{lem:5.7}
The permutation $\sigma$:= $h_0^{-1} \circ h_1$ is a meander permutation.
\end{lem}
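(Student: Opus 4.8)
The plan is to show that the shooting curve $\mathcal{M}$ determined by $(h_0,h_1)$ is a simple (Jordan) curve. Since $h_0$ is a bijection, $\mathcal{M}$ is automatically a single connected path through all $N$ crossings, so the meander property reduces to verifying that no two arcs on the \emph{same} side of the $h_1$-axis cross. The arc joining the $h_0$-consecutive crossings $h_0(m)$ and $h_0(m+1)$ lies above the axis for one parity of $m$ and below for the other, and two same-side arcs cross precisely when their endpoints interleave in the order $<_1$ of \eqref{eq:5.4a}. By the trivial equivalence $u\mapsto -u$ of definition~\ref{def:2.3}, which exchanges the two sides, it suffices to rule out crossings among the upper arcs. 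I would first classify each upper arc by its type $i:=\max\{i_\xi,i_\eta\}\in\{1,2,3\}$, where $\xi,\eta$ are its endpoints; by the Morse recursion \eqref{eq:1.22a} the two Morse numbers are $i-1$ and $i$, and their parity fixes the $h_0$-orientation of the arc, exactly as in figure~\ref{fig:5.3}(b).

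Using the orderings \eqref{eq:5.4b}, \eqref{eq:5.4c}, the $h_0$-order splits $\mathcal{E}$ into the western block $\mathbf{N}\cup\mathbf{EW}\cup\mathbf{W}$, then $\mathcal{O}$, then the eastern block $\mathbf{E}\cup\mathbf{WE}\cup\mathbf{S}$. Consequently every arc has both endpoints in $\mathrm{clos}\,\mathbf{W}$, both in $\mathrm{clos}\,\mathbf{E}$, or is one of the two arcs incident to $\mathcal{O}$ (the unique $i=3$ vertex), namely $w_-^0\mathcal{O}$ and $\mathcal{O}w_+^0$, which carry type $3$; these two are $h_0$-consecutive on opposite sides of the axis, so exactly one of them is upper.

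For the within-hemisphere arcs I would transfer non-crossing from the planar theory. By definition~\ref{def:5.1}(i) and lemma~\ref{lem:2.7} the restriction of $(h_0,h_1)$ to $\mathrm{clos}\,\mathbf{W}$ is the SZ-pair of a Western disk and to $\mathrm{clos}\,\mathbf{E}$ the ZS-pair of an Eastern disk; by theorem~\ref{thm:2.4} each yields a genuine planar Sturm meander $\mathcal{M}_\mathbf{W}$, $\mathcal{M}_\mathbf{E}$, whose same-side arcs do not cross. The western block is an initial segment of the Western $h_0$-order and the eastern block a final segment of the Eastern $h_0$-order, so every within-$\mathbf{W}$ (resp.\ within-$\mathbf{E}$) arc of $\mathcal{M}$ is genuinely an arc of $\mathcal{M}_\mathbf{W}$ (resp.\ $\mathcal{M}_\mathbf{E}$). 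Crucially, order restriction makes the global $<_1$ agree with the hemisphere $<_1$ on each of $\mathrm{clos}\,\mathbf{W}$ and $\mathrm{clos}\,\mathbf{E}$; hence two same-hemisphere arcs interleave globally iff they interleave in the corresponding planar meander, and therefore they do not cross. (A fixed parity shift between global and Eastern $h_0$-positions may turn a global upper arc into a lower arc of $\mathcal{M}_\mathbf{E}$, but since $\mathcal{M}_\mathbf{E}$ is a meander on both sides this is harmless.)

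The main obstacle is the remaining, genuinely three-dimensional interaction: crossings between a within-$\mathbf{W}$ arc and a within-$\mathbf{E}$ arc, and between either of these and the type-$3$ arc at $\mathcal{O}$. These cannot be read off from a single hemisphere meander, because the meridians are split by $<_1$: by \eqref{eq:5.4c}, $\mathbf{WE}$ and $\mathbf{W}$ lie left of $\mathcal{O}$ while $\mathbf{EW}$ and $\mathbf{E}$ lie right of it, so such arcs may straddle $\mathcal{O}$. Here I would argue directly, \emph{without} invoking the Jordan property (which is precisely what is being proved, so that the nesting shortcut of the proof of theorem~\ref{thm:4.1}(iv) is unavailable): an assumed interleaving $\xi_W<_1\xi_E<_1\eta_W<_1\eta_E$ is driven to a contradiction through the type classification of figure~\ref{fig:5.3}(b) together with the clean $h_1$-separation. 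The upper type-$3$ arc lies entirely on one side of $\mathcal{O}$ in $<_1$ (since $w_-^0\in\mathbf{W}$, $w_+^0\in\mathbf{E}$), while the overlap condition definition~\ref{def:1.2}(iv), encoded combinatorially in lemmata~\ref{lem:5.4}--\ref{lem:5.6} (serpent overlap, location of $\mathcal{O}$, and extremality of the $w_\pm^\iota$ as $h_{1-\iota}$-extreme sources), pins down how far the faces $\mathbf{NE},\mathbf{SW}$ and $\mathbf{NW},\mathbf{SE}$ reach along their shared meridians and thereby bounds the $<_1$-reach of any straddling hemisphere arc. Carrying out this case analysis, organized by the three arc types and replacing the Jordan-based nesting of theorem~\ref{thm:4.1}(iv) with direct serpent-reach estimates, completes the proof that the upper arcs are pairwise non-crossing, and hence that $\sigma$ is a meander permutation.
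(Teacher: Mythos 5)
Your overall strategy is exactly the paper's: reduce to conflicts of upper arcs via the trivial equivalence $u\mapsto -u$, classify upper arcs by type $i=\max\{i_\xi,i_\eta\}\in\{1,2,3\}$ as in fig.~\ref{fig:5.3}(b), dispose of same-hemisphere arc pairs by order restriction to the planar meanders of $\mathrm{clos\,}\mathbf{W}$ and $\mathrm{clos\,}\mathbf{E}$ (your remark about the parity shift for $\mathbf{E}$ is a legitimate point the paper passes over silently), and then attack the cross-hemisphere and $\mathcal{O}$-incident configurations with the orderings \eqref{eq:5.4b}, \eqref{eq:5.4c} and lemmata~\ref{lem:5.4}--\ref{lem:5.6}. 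You also correctly note that the Jordan-based nesting argument of theorem~\ref{thm:4.1}(iv) is unavailable here.

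However, the proposal stops precisely where the real work begins. The entire substance of the lemma is the case analysis you defer with ``carrying out this case analysis \ldots{} completes the proof,'' and you do not supply the one auxiliary fact that makes it close: the paper's proposition~\ref{prop:5.8}, that for an upper $h_0$-arc $(\xi,\eta)$ of type $1$ or $2$, $\xi\in\mathbf{W}$ forces $\eta\in\mathbf{W}$ (i.e.\ the arc cannot land on $\mathbf{N}$ or on the meridian $\mathbf{EW}$, which requires an argument about the S-path structure of $h_0$ on $\mathrm{clos\,}\mathbf{W}$, not just the block orderings). Without this, your ``clean $h_1$-separation'' only tells you which blocks the endpoints may lie in; it does not produce the contradictions in the three interleaving patterns \eqref{eq:5.23}--\eqref{eq:5.25}, nor in the two postponed cases where one arc is $w_-^0\mathcal{O}$. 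In those last cases the paper needs, concretely: that $\xi_2$ must lie on the $\mathbf{S}$-polar $h_0$-serpent inside $\mathbf{WE}$ by the overlap lemma~\ref{lem:5.4}, forcing $\eta_2\in\mathbf{S}$ with $i_{\eta_2}=0$ against the type list; and, in the final case, that $\xi_1=v_+^{\nu}$, $\eta_1=v_+^{\nu-1}$ with $i_{\eta_1}=0$, again contradicting the type list. Your phrase ``bounds the $<_1$-reach of any straddling hemisphere arc'' gestures at this but is not an argument. So the skeleton is right and the toolbox is correctly identified, but the proof as written has a genuine gap at its center.
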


As a preparation for several cases in the proof of this meander lemma, for $\sigma$ and the associated formal shooting curve $\mathcal{M}$, we first comment on $h_0$\emph{-arcs} $(\xi, \eta)$ of $\mathcal{M}$ over the horizontal $h_1$-axis.
These are defined by $h_0$-adjacency and $h_1$-ordering
	\begin{equation}
	h_0^{-1}(\eta) =
	h_0^{-1}(\xi) \pm 1\,, \quad
	\xi <_1 \eta\,.
	\label{eq:5.16}
	\end{equation}
Here, as in \eqref{eq:5.4a}, we abbreviate the $h_\iota$-order $h_\iota^{-1}(\xi) < h_\iota^{-1}(\eta)$ by $\xi <_\iota \eta$.
The order $\xi <_1 \eta$ simply labels the left vertex of the arc as $\xi$, and the right vertex as $\eta$.
By the trivial equivalence $u \mapsto -u$ of \eqref{eq:2.4}, i.e. by orientation reversal of $h_0$ and $h_1$, it is sufficient to avoid \emph{conflicts of upper arcs}, as illustrated in fig.~\ref{fig:5.3}(a).
In symbols, our indirect proof will derive a contradiction from the conflict assumption
	\begin{equation}
	\xi_1 \, <_1 \, \xi_2 \, <_1 \, \eta_1 \, <_1 \, \eta_2
	\label{eq:5.17}
	\end{equation}
on any two arcs $(\xi_j, \eta_j)$ above the horizontal $h_1$-axis.
By adjacency \eqref{eq:1.22a} of Morse numbers at $h_0$-adjacent vertices along $\mathcal{M}$, and because upper and lower arcs alternate along the shooting curve defined by the ``arbitrary'' permutation $\sigma = h_0^{-1} \circ h_1$, the Morse numbers and $h_0$-orientations of any upper arc $(\xi, \eta)$ belong to one of the three types $i= \text{max } \lbrace i_\xi , i_\eta \rbrace = 1,2,3$ in fig.~\ref{fig:5.3}(b).
The following observation will be used repeatedly in the proof of meander lemma~\ref{lem:5.7}.

\begin{prop}\label{prop:5.8}
Suppose an upper $h_0$-arc $(\xi, \eta)$ satisfies $\eta \neq \mathcal{O}$, i.e. the upper arc of $\mathcal{M}$ is of type 1 or type 2.
Then
	\begin{equation}
	\xi \in \mathbf{W}\quad \Longrightarrow \quad \eta \in \mathbf{W}\,.
	\label{eq:5.18}
	\end{equation}
\end{prop}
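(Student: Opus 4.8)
The plan is to confine $\eta$ to $\mathbf{N} \cup \mathbf{EW} \cup \mathbf{W}$ by the $h_0$-ordering, to eliminate $\mathbf{N}$ by the $h_1$-ordering, and to eliminate $\mathbf{EW}$ by combining the upper-arc Morse increment with the cell structure of the meridian. First I would note that $\xi \in \mathbf{W}$ places $\xi$ in the first block of both orderings \eqref{eq:5.4b} and \eqref{eq:5.4c}, so in particular $\xi <_0 \mathcal{O}$. Since $(\xi,\eta)$ is an $h_0$-arc we have $h_0^{-1}(\eta) = h_0^{-1}(\xi) \pm 1$ by \eqref{eq:5.16}; as $\eta \neq \mathcal{O}$, in both the predecessor and the successor case one obtains $h_0^{-1}(\eta) < h_0^{-1}(\mathcal{O})$, that is $\eta <_0 \mathcal{O}$ (in the successor case equality would force $\eta = \mathcal{O}$, which is excluded). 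By \eqref{eq:5.4b} this already forces $\eta \in \mathbf{N} \cup \mathbf{EW} \cup \mathbf{W}$. The pole $\mathbf{N} = h_1(1)$ is the $h_1$-minimum, whereas $\xi <_1 \eta$ requires $h_1^{-1}(\eta) > h_1^{-1}(\xi) \geq 1$; hence $\eta \neq \mathbf{N}$, and it remains only to exclude $\eta \in \mathbf{EW}$.

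For that exclusion I would use that the arc is upper. By the recursion \eqref{eq:1.22a} and the orientation convention recorded in fig.~\ref{fig:5.3}(b), an upper arc increases the Morse number from its left endpoint to its right endpoint, so $i_\eta = i_\xi + 1$. The vertices lying on the meridian $\mathbf{EW}$ are barycenters of $0$-cells and $1$-cells only, so a putative $\eta \in \mathbf{EW}$ satisfies $i_\eta \in \{0,1\}$; together with $i_\eta = i_\xi + 1 \geq 1$ this pins down $i_\eta = 1$ and $i_\xi = 0$, i.e. a type-$1$ arc in which $\eta$ is a meridian saddle and $\xi$ a sink. In particular the edge $c_\eta$ is then a segment of the meridian path $\mathbf{EW}$, so both of its endpoint vertices lie in $\mathbf{EW} \cup \{\mathbf{N}, \mathbf{S}\}$.

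The contradiction would then come from identifying $\xi$ as one of these endpoints. By the order restriction property, $\xi$ and $\eta$ remain $h_0$-consecutive inside $\mathrm{clos\,}\mathbf{W}$, on which $(h_0,h_1)$ restricts to the SZ-pair of the planar Western disk; realizing this disk as a genuine planar Sturm attractor via theorem~\ref{thm:2.4}, and applying the Wolfrum lemma~\ref{lem:4.2} to the $h_0$-adjacent pair of Morse indices $1$ and $0$, one obtains the heteroclinic orbit $\eta \leadsto \xi$ and hence $\xi \in \partial c_\eta$. But $\partial c_\eta \subseteq \mathbf{EW} \cup \{\mathbf{N}, \mathbf{S}\}$ is disjoint from the open hemisphere $\mathbf{W}$, contradicting $\xi \in \mathbf{W}$. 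Therefore $\eta \notin \mathbf{EW}$, and combined with the first paragraph this yields $\eta \in \mathbf{W}$, as claimed.

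I expect the main obstacle to be exactly this last step, namely the passage from the purely order-theoretic $h_0$-adjacency of $\xi$ and $\eta$ to the genuine incidence $\xi \in \partial c_\eta$ in the abstract template. Everything else is bookkeeping with the two vertex orderings and the Morse recursion, but here the abstract combinatorial data must be converted into cell-adjacency information; the cleanest route is to localize to $\mathrm{clos\,}\mathbf{W}$, where order restriction reduces the question to the already-established planar theory, rather than attempting to read the incidence off the permutation $\sigma$ directly.
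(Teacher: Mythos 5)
Your proof is correct and follows the same three-stage skeleton as the paper's: confine $\eta$ to $\mathbf{N} \cup \mathbf{EW} \cup \mathbf{W}$ via $\eta <_0 \mathcal{O}$ and \eqref{eq:5.4b}, exclude $\eta = \mathbf{N}$ (you use $h_1$-minimality of the pole where the paper uses $i_{\mathbf{N}} = 0 < i_\eta$; both are immediate), and exclude $\eta \in \mathbf{EW}$ by identifying the arc as type 1 with $i_\xi = 0$, $i_\eta = 1$. The one genuine divergence is the mechanism for that last exclusion. The paper stays purely combinatorial: since $\xi$ and $\eta$ are $h_0$-consecutive elements of the 1-skeleton and $h_0$ restricts to an S-path on $\mathrm{clos\,}\mathbf{W}$, rule (iii) of definition~\ref{def:2.2} forces the step from $\xi$ to $\eta$ to run along the bipolar orientation of the 1-skeleton, so $\xi$ must be an endpoint of the meridian edge $c_\eta$ and hence lies in $\mathbf{N} \cup \mathbf{EW}$ rather than $\mathbf{W}$. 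You instead realize $\mathrm{clos\,}\mathbf{W}$ as a planar Sturm attractor via theorem~\ref{thm:2.4} and extract the incidence $\xi \in \partial c_\eta$ from the heteroclinic orbit $\eta \leadsto \xi$ guaranteed by $h_0$-adjacency and lemma~\ref{lem:4.2}. Both routes reach the same contradiction; yours is legitimate because theorem~\ref{thm:2.4} identifies the realized Thom-Smale complex with $\mathrm{clos\,}\mathbf{W}$ cell by cell, but it imports the dynamical realization where the definition of the S-path already supplies the needed cell-adjacency directly -- so the obstacle you single out at the end dissolves combinatorially, without any detour through Sturm realizations.
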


\begin{proof}[\textbf{Proof.}]
We recall the $h_\iota$-orders $<_\iota$ of poles, hemispheres, and meridians according to \eqref{eq:5.4b}, \eqref{eq:5.4c}.
By \eqref{eq:5.4b}, the assumption $\xi \in \mathbf{W}$ implies $\xi <_0 \mathcal{O}$.
Since $\eta \neq \mathcal{O}$ is $h_0$-adjacent to $\xi$, along the arc $(\xi, \eta)$, this implies $\eta <_0 \mathcal{O}$.
Equivalently,
	\begin{equation}
	\eta \in \mathbf{N} \cup \mathbf{EW} \cup \mathbf{W}
	\subseteq \mathrm{clos\,} \mathbf{W}
	\label{eq:5.19}
	\end{equation}
again by \eqref{eq:5.4b}.
It remains to exclude $\eta = \mathbf{N}$, and $\eta \in \mathbf{EW}$.

To exclude $\eta = \mathbf{N}$, we only have to observe $i_\mathbf{N} = 0 < i_\eta$, for all types.

Next, suppose indirectly that $\eta \in \mathbf{EW}$.
Then $i_\eta \in \lbrace 0,1 \rbrace$, as always on any meridian, by restricted ordering on $\mathrm{clos\,}\mathbf{W}$.
This identifies the upper arc $(\xi, \eta)$ to be of type 1, according to the list of types in fig.~\ref{fig:5.3}(b).
In particular
	\begin{equation}
	i_\xi =0\,, \quad i_\eta =1\,,
	\label{eq:5.20}
	\end{equation}
and $\eta$ is the $h_0$-successor of $\xi$.
On the other hand, $h_0$ is an S-path in the planar domain \eqref{eq:5.19}, by definition~\ref{def:5.1}(i).
By \eqref{eq:5.20}, both $\xi$ and $\eta$ belong to the 1-skeleton of $\mathrm{clos\,} \mathbf{W}$.
Hence $\eta \in \mathbf{EW}$ and $i_\eta =1$ imply $\xi \in \mathbf{N} \cup \mathbf{EW}$.
This contradicts our assumption $\xi \in \mathbf{W}$, and proves the lemma.
\end{proof}

\begin{proof}[\textbf{Proof of lemma~\ref{lem:5.7}.}]
We show, indirectly and without loss of generality, that a conflict \eqref{eq:5.17} of upper arcs $(\xi_1,\eta_1)$, $(\xi_2, \eta_2)$ cannot arise; see fig.~\ref{fig:5.3}(a).

First suppose the vertex pairs $(\xi_1,\eta_1)$ and $(\xi_2, \eta_2)$ belong to one and the same of the two hemispheres
	\begin{equation}
	\begin{aligned}
	&\mathrm{clos\,}\mathbf{W} 
	&=& \,\mathbf{N} \cup \mathbf{S} \cup \mathbf{EW} 
	\cup \mathbf{WE} \cup \mathbf{W}
	&=& \,\mathcal{C} \smallsetminus 
	\lbrace \mathbf{E} \cup \mathcal{O}\rbrace\,,\\
	&\mathrm{clos\,}\mathbf{E} 
	&=& \, \mathbf{N} \cup \mathbf{S} \cup \mathbf{EW} 
	\cup \mathbf{WE} \cup \mathbf{E}
	&=& \,\mathcal{C} \smallsetminus 
	\lbrace \mathbf{W} \cup \mathcal{O}\rbrace\,.
	\end{aligned}
	\label{eq:5.21}
	\end{equation}
Then the property of order restriction, explained above, prevents any arc conflict \eqref{eq:5.17}, in view of theorem \ref{thm:2.4}(i). 
See our comments on theorem~\ref{thm:5.2} above. 
We may therefore assume that the vertex pairs $(\xi_1,\eta_1)$ and $(\xi_2, \eta_2)$ are not members of the same closed hemisphere in\eqref{eq:5.21}.

Suppose one vertex of $(\xi_j, \eta_j)$ is the barycenter $\mathcal{O}$ of the 3-cell template $\mathcal{C} = \mathrm{clos\,} c_\mathcal{O}$.
Then the list of upper arcs in fig.~\ref{fig:5.3}(b) implies
	\begin{equation}
	\xi_j = w_-^0\,, \quad \eta_j = \mathcal{O}\,.
	\label{eq:5.22}
	\end{equation}
We postpone this case, for the moment.

For the remaining cases we may assume $\xi_j, \eta_j \neq \mathcal{O}$, for all $j=1,2$.
Then the pairs $(\xi_j, \eta_j)$ must lie in opposite closed hemispheres \eqref{eq:5.21}.
The total order $<_1$ by $h_1$ then implies that $\mathcal{O}$ cannot be strictly below or strictly above the $h_1$-order of the conflict assumption \eqref{eq:5.17}; see the $h_1$-order \eqref{eq:5.4c} of poles, meridians, and hemispheres.
This leaves us with the three cases
	\begin{align}
	&\xi_1 <_1 \mathcal{O} <_1 \xi_2 <_1 \eta_1 <_1 \eta_2\,;
	\label{eq:5.23}\\
	&\xi_1 <_1 \xi_2 <_1 \mathcal{O} <_1 \eta_1 <_1 \eta_2\,;
	\label{eq:5.24}\\
	&\xi_1 <_1 \xi_2 <_1 \eta_1 <_1 \mathcal{O} <_1 \eta_2\,;
	\label{eq:5.25}
	\end{align}
which we now treat, one by one.

Consider the $h_1$-order \eqref{eq:5.23} first. Then $\xi_1 \in \mathbf{W}$, or else both pairs $(\xi_j,\eta_j)$ belong to $\mathrm{clos\,}\mathbf{E}$; see \eqref{eq:5.4c} and \eqref{eq:5.21}.
Since $\eta_1 \neq \mathcal{O}$, proposition~\ref{prop:5.8} implies $\eta_1 \in \mathbf{W}$; see \eqref{eq:5.18}.
Hence $\eta_1 <_1 \mathcal{O}$, by \eqref{eq:5.4c}, which contradicts our assumption \eqref{eq:5.23}.

Next assume the $h_1$-order \eqref{eq:5.24}.
Since $(\xi_1, \eta_1)$ and $(\xi_2, \eta_2)$ must belong to opposite closed hemispheres, \eqref{eq:5.24} and \eqref{eq:5.4c} together imply at least one of $\xi_1, \xi_2 \in \mathbf{W}$ and at least one of $\eta_1, \eta_2 \in \mathbf{E}$; see \eqref{eq:5.21}.
Since definition~\ref{def:5.1}(ii) prevents direct $h_0$-passages from $\mathbf{W}$ to $\mathbf{E}$ we conclude
	\begin{equation}
	\xi_1 \in \mathbf{W}\,, \, \eta_2\in \mathbf{E}\,, \quad
	\text{or} \quad \xi_2 \in \mathbf{W}\,, \, \eta_1 \in \mathbf{E}\,.
	\label{eq:5.26}
	\end{equation}
In either case we obtain $\xi_j, \eta_j \in \mathbf{W}$ for at least one arc $(\xi_j, \eta_j)$; see proposition~\ref{prop:5.8}, \eqref{eq:5.18} again.
In either case, \eqref{eq:5.4c} provides a contradiction to assumption \eqref{eq:5.24} for $\eta_j$.

The last case \eqref{eq:5.25} is of a slightly different flavor.
Opposite closed hemispheres \eqref{eq:5.4c} and \eqref{eq:5.25} imply
	\begin{equation}
	\eta_2 \in \mathbf{E}\,, \quad
	\xi_2 \not\in \lbrace \mathbf{N}, \mathbf{S} \rbrace \,,
	\label{eq:5.27}
	\end{equation}
and one of $\xi_1, \eta_1 \in \mathbf{W}$, this time.
By $h_0$-adjacency of $\xi_1, \eta_1$, the orderings \eqref{eq:5.4b} and \eqref{eq:5.4c}, respectively, with \eqref{eq:5.25} then imply the two conclusions
	\begin{equation}
	\begin{aligned}
	&\xi_1, \eta_1 \in \mathbf{N} \cup \mathbf{EW} \cup \mathbf{W}\,,
	\quad \text{and}\\
	&\xi_1, \eta_1 \in \mathbf{N} \cup \mathbf{WE} \cup \mathbf{W}\,.
	\end{aligned}
	\label{eq:5.28}
	\end{equation}
Because $\xi_1, \eta_1$ are $h_0$-adjacent, with one vertex in $\mathbf{W}$, nontriviality of the $\mathbf{N}$-polar $h_0$-serpent allows us to conclude
	\begin{equation}
	\xi_1, \eta_1 \in \mathbf{W}\,,
	\label{eq:5.29}
	\end{equation}
from \eqref{eq:5.28}, for both vertices.
Since $\xi_2 \not\in \lbrace \mathbf{N}, \mathbf{S}\rbrace$ is $h_1$-between $\xi_1$ and $\eta_1$, by assumption \eqref{eq:5.25}, but $h_0$-adjacent to $\eta_2 \in \mathbf{E}$, \eqref{eq:5.4b} and \eqref{eq:5.4c} similarly imply
	\begin{equation}
	\xi_2 = v_+^k \in \mathbf{WE}\,,
	\label{eq:5.30}
	\end{equation}
for some $1 \leq k \leq 2m-1$.
See fig.~\ref{fig:5.1}.
More precisely, $k < \nu \leq \nu '$.
Indeed the $h_1$-position of $\xi_2$ is $h_1$-between $\xi_1, \eta_1 \in \mathbf{W}$, and therefore $\xi_2$ cannot belong to the $\mathbf{N}$-polar $h_1$-serpent $v_+^{2n} \ldots v_+^{\nu}$.
By overlap lemma~\ref{lem:5.4} for the $\mathbf{N}$-polar $h_1$-serpent with the antipodal $\mathbf{S}$-polar $h_0$-serpent $v_+^{\nu'} \ldots v_+^0$ along $v_+^{\nu'} \ldots v_+^\nu$, however,
	\begin{equation}
	\xi_2 = v_+^k \quad \text{with} \quad k < \nu \leq \nu'
	\label{eq:5.31}
	\end{equation}
belongs to that $\mathbf{S}$-polar $h_0$-serpent, excepting its initial point $v_+^{\nu'}$.
Therefore the $h_0$-adjacent neighbor $\eta_2$ of $\xi_2$ must belong to the same $\mathbf{S}$-polar $h_0$-serpent in $\mathbf{WE} \cup \mathbf{S}$.
This contradicts $\eta_2 \in \mathbf{E}$ and eliminates case \eqref{eq:5.27}.

To complete the proof of meander lemma~\ref{lem:5.7} it only remains to consider the two postponed cases \eqref{eq:5.22} with indirect ordering assumption \eqref{eq:5.17}, and reach a contradiction.
We first consider the case $j=1$ in \eqref{eq:5.22}, i.e. $\xi_1= w_-^0$,  $\ \eta_1 =\mathcal{O}$, and
	\begin{equation}
	\mathbf{W} \ni w_-^0 <_1 \xi_2 <_1 \mathcal{O} <_1 \eta_2\,.
	\label{eq:5.32}
	\end{equation}
In this case the $h_1$-order \eqref{eq:5.4c} implies $\xi_2 \in \mathbf{N} \cup \mathbf{WE} \cup \mathbf{W}$ and $\eta_2 \in \mathbf{S} \cup \mathbf{EW} \cup \mathbf{E}$.
By \eqref{eq:5.18} of proposition~\ref{prop:5.8}, this excludes $\xi_2 \in\mathbf{W}$.
The $\mathbf{N}$-polar $h_1$-serpent $v_+^{2n} \ldots v_+^{\nu}<_1 w_-^0$ cannot contain $\xi_2 >_1 w_-^0 \in \mathbf{W}$ either.
Therefore overlap lemma~\ref{lem:5.4} again implies \eqref{eq:5.31}, i.e. both $\xi_2$ and $\eta_2$ belong to the same $\mathbf{S}$-polar $h_0$-serpent $v_+^{\nu'} \ldots v_+^0$ in $\mathbf{WE} \cup \mathbf{S}$.
In particular
	\begin{equation}
	\eta_2 \in ( \mathbf{S} \cup \mathbf{EW} \cup \mathbf{E}) \cap
	(\mathbf{WE} \cup \mathbf{S}) = \mathbf{S}
	\label{eq:5.33}
	\end{equation}
has Morse number $i_{\eta_2}=0$.
This contradicts the type classification of fig.~\ref{fig:5.3}(b) for the upper $h_0$-arc $(\xi_2, \eta_2)$.

The last remaining case is $j=2$ in \eqref{eq:5.22}, i.e. $\xi_2 = w_-^0$, $\eta_2 = \mathcal{O}$ and
	\begin{equation}
	\xi_1 <_1 w_-^0 <_1 \eta_1 <_1 \mathcal{O}\,.
	\label{eq:5.34}
	\end{equation}
By lemma~\ref{lem:5.6}, $\xi_1$ still belongs to the $\mathbf{N}$-polar $h_1$-serpent $v_+^{2n} \ldots v_+^{\nu} \subseteq \mathbf{N} \cup \mathbf{WE}$, whereas its $h_0$-neighbor $\eta_1 >_1 w_-^0$ does not.
Suppose $\xi_1 = \mathbf{N}$.
Then $(\xi_1,\eta_1) = (\mathbf{N}, v_-^1)$ is the first $\mathbf{N}$-polar $h_0$-arc, and hence the $\eta_1 = v_-^1 >_1 \mathcal{O} >_1 \mathbf{N}$ by lemma~\ref{lem:5.5}.
This contradicts \eqref{eq:5.34}.

This leaves $\xi_1 \in v_+^{2n-1} \ldots v_+^\nu \subseteq \mathbf{WE}$ to be considered, with 
	\begin{equation}
	\mathcal{O} <_0 \xi_1, \eta_1 \quad \text{and} \quad
	\eta_1 \in \mathbf{S} \cup \mathbf{WE} \cup \mathbf{E}\,,
	\label{eq:5.35}
	\end{equation}
by \eqref{eq:5.4b}.
On the other hand $\eta_1 <_1 \mathcal{O}$ and \eqref{eq:5.4c} imply
$\eta_1 \in \mathbf{N} \cup \mathbf{WE} \cup \mathbf{W}$, and hence
	\begin{equation}
	\eta_1 \in \mathbf{WE}\,.
	\label{eq:5.36}
	\end{equation}
Summarizing, this shows $\xi_1 = v_+^k\,, \, \eta_1 = v_+^\ell$, with
	\begin{equation}
	0 < \ell < \nu \leq k < 2n\,,
	\label{e	q:5.37}
	\end{equation}
because $\xi_1$ belongs to the $\mathbf{N}$-polar $h_1$-serpent, whereas $\eta_1 \in \mathbf{WE}$ does not.
By $h_0$-adjacency of $(\xi_1, \eta_1)$ this implies
	\begin{equation}
	\xi_1 = v_+^{\nu}\,,\, \eta_1 = v_+^{\nu-1}\,.
	\label{eq:5.38}
	\end{equation}
Here $i_{\xi_1} =1$, because the termination $\xi_1$ of the $\mathbf{N}$-polar $h_1$-serpent is the $h_1$-predecessor of the $i=2$ source $w_-^0$; see lemma~\ref{lem:5.6}.
Hence $\eta_1 \in \mathbf{WE}$ implies $i_{\eta_1}=0$.
This contradicts the types in the upper $h_0$-arc list of fig.~\ref{fig:5.3}(b).

This final contradiction completes our indirect proof of the meander lemma~\ref{lem:5.7}.
\end{proof}

\begin{proof}[\textbf{Proof of theorem~\ref{thm:5.2}.}]
We summarize the above results.
Let $(h_0,h_1)$ be the unique SZS-pair of bijective paths $h_\iota$: $\lbrace 1, \ldots , N \rbrace \rightarrow \mathcal{E}$ through the vertex set $v \in \mathcal{E}$ of the 3-cell template bipolar regular cell complex $\mathcal{C}= \bigcup_{v \in \mathcal{E}} c_v$, from pole $\mathbf{N} = h_\iota (1)$ to pole $\mathbf{S} = h_\iota (N)$, $\, \iota=0,1$; see definition~\ref{def:5.1}.
In particular the permutation $\sigma$:= $h_0^{-1} \circ h_1 \in S_N$ is dissipative.
By lemma~\ref{lem:5.3}, $\sigma$ is Morse.
By lemma~\ref{lem:5.7}, $\sigma$ is a meander permutation with dissipative Morse meander $\mathcal{M}$.
Hence $\sigma, \mathcal{M}$ are Sturm; see definition~\ref{def:1.3} and \eqref{eq:1.19a}--\eqref{eq:1.24}.

In lemmata~\ref{lem:5.3}--\ref{lem:5.6}, respectively, we have shown that the Sturm meander $\mathcal{M}$ also satisfies conditions (i)--(iv) of a 3-meander template; see definition~\ref{def:1.3}.
This completes the proof of theorem~\ref{thm:5.2}.
\end{proof}

We conclude this section with a brief review of fig.~\ref{fig:1.3}.
We recall how the 3-meander template $\mathcal{M}$ resulted from the SZS-construction of $h_0,h_1$, via the permutation $\sigma$:= $h_0^{-1}\circ h_1$.
First note the overlapping antipodally polar pairs of serpents $h_\iota, h_{1-\iota}$ of lemma~\ref{lem:5.4}.
By lemma~\ref{lem:5.5} the polar $h_0$ arcs overarch the single $i=3$ vertex $\mathcal{O}$, as an upper arc from $\mathbf{N}$ and as a lower arc from $\mathbf{S}$.
The $\mathbf{N}$-polar $h_0$-serpent $v_-^0 \ldots v_-^{\mu'}$ then continues, left to right by \eqref{eq:1.22a}, with alternating Morse numbers, to the immediate $h_0$-predecessor $v_-^{\mu'}$ of the immediate $h_1$-predecessor $w_-^1$ of $\mathcal{O}$.
See lemma~\ref{lem:5.6}.
Similarly, the immediate $h_1$-successor $v_-^\mu$ of the immediate $h_0$-successor $w_+^0$ of $\mathcal{O}$ is the termination point of the $\mathbf{S}$-polar $h_1$-serpent $v_-^\mu \ldots v_-^{2m}$.
Note how $v_-^\mu$ also belongs to the $\mathbf{N}$-polar $h_0$-serpent, by overlap $\mu \leq \mu'$.
Also by overlap lemma~\ref{lem:5.4}, the remaining equilibria $v_-^{\mu} \ldots v_-^{2m}$ between $v_-^{\mu}$ and $v_-^{2m} = \mathbf{S}$ on the $h_1$-axis are of alternating Morse numbers 1 and 0, because they belong to the $\mathbf{S}$-polar $h_1$-serpent.
This identifies the upper arc boundary of the 3-meander template of fig.~\ref{fig:1.3} to enumerate the meridian closure $\mathbf{N} \cup \mathbf{EW} \cup \mathbf{S}$.
Similarly, the lower arc boundary of the vertices $v_+^{2n} \ldots v_+^0$ enumerates the other meridian closure $\mathbf{N} \cup \mathbf{WE} \cup \mathbf{S}$ of the 3-cell template of fig.~\ref{fig:1.1}; see also fig.~\ref{fig:5.1}.

The realization of the planar regular bipolar cell complex $\mathrm{clos\,}\mathbf{W}$ by the restricted SZ-pair $(h_0,h_1)$, as the planar Sturm attractor of the restricted Sturm meander can be visualized by the following \emph{Eastern scoop} construction, in the 3-meander template of fig.~\ref{fig:1.3}.
In fact we have to remove $\mathcal{O}$ and all vertices in $\mathbf{E}$, by the scoop.
By orderings \eqref{eq:5.4b}, \eqref{eq:5.4c}, $v \in \mathbf{E}$ are characterized by
	\begin{equation}
		w_+^0 \leq_0 v\leq_0 w_+^1 \quad \text{and} \quad w_+^1 \leq_1 v \leq_1 w_+^0\,, 
	\label{eq:5.39}
	\end{equation}
due to antipodal polar serpent overlap and definition~\ref{def:5.1}(ii).
In fig.~\ref{fig:1.3} these are precisely the vertices from $w_+^1$ to $w_+^0$, on the horizontal $h_1$-axis, which do not belong to an upper boundary $h_0$-arc of the meridian $\mathbf{EW}$.
The scoop construction removes all these vertices, together with $\mathcal{O}$, and replaces this segment of $\mathcal{M}$ by a single left-oriented upper $h_0$-arc from $w_-^0$ to $v_+^{2n-1}$.
The $\mathbf{S}$-polar $h_0$-serpent becomes full, accordingly, spanning all of $v_+^{2n-1} \ldots v_+^1 v_+^0$.
Indeed all vertices from $v_+^{2n-1}$ downs to $v_+^{\nu'}$ have lost their possible $h_0$-arc partners in $\mathbf{E}$ by our scoop.
An analogous \emph{Western scoop} can remove $\mathcal{O}$ and $\mathbf{W}$, instead.

In the sequel \cite{firo3d-2}, these scoop constructions will serve as a first step to show that, in fact, the dynamic Thom-Smale complex $\widetilde{\mathcal{C}}$ of the global Sturm attractor $\widetilde{\mathcal{A}}$, constructed from $h_0, h_1$ and
$\sigma = h_0^{-1} \circ h_1$, $\mathcal{M}$ above, coincides with the given cell complex $\mathcal{C}$.
At least for the closed hemispheres $\mathrm{clos\,}\mathbf{W}$ and $\mathrm{clos\,}\mathbf{E}$ this is sketched, but not proved, by the above scoop.


\section{Some solid Sturm octahedra}
\label{sec6}

We illustrate some of our results on Sturm 3-cell templates and 3-meander templates for the case of solid three-dimensional octahedra $\mathbb{O}= \mathrm{clos\,} c_\mathcal{O}$.
The 3-cell boundary $\partial c_\mathcal{O}$ of $\mathcal{O}=27$ consists of eight face triangle 2-cells $19, \ldots , 26$, six 0-cell vertices $1, \ldots , 6$, and twelve 1-cell edges $7, \ldots , 18$.
See fig.~\ref{fig:6.1} for this template in planar form.
The face 26, with vertices $1,2,3$ and boundary edges $7,8,11$, can be thought of as hidden behind the seven faces $19, \ldots , 25$.
For an easy insertion of the Hamiltonian pole-to-pole paths we instead draw the $i=2$ barycenter 26 of the backwards face, in duplicate, outside the triangle $1, 2,3$.
We omit the interior $i=3$ ball vertex $\mathcal{O} =27$, at this stage.

\begin{figure}[t!]
\centering \includegraphics[width=\textwidth]{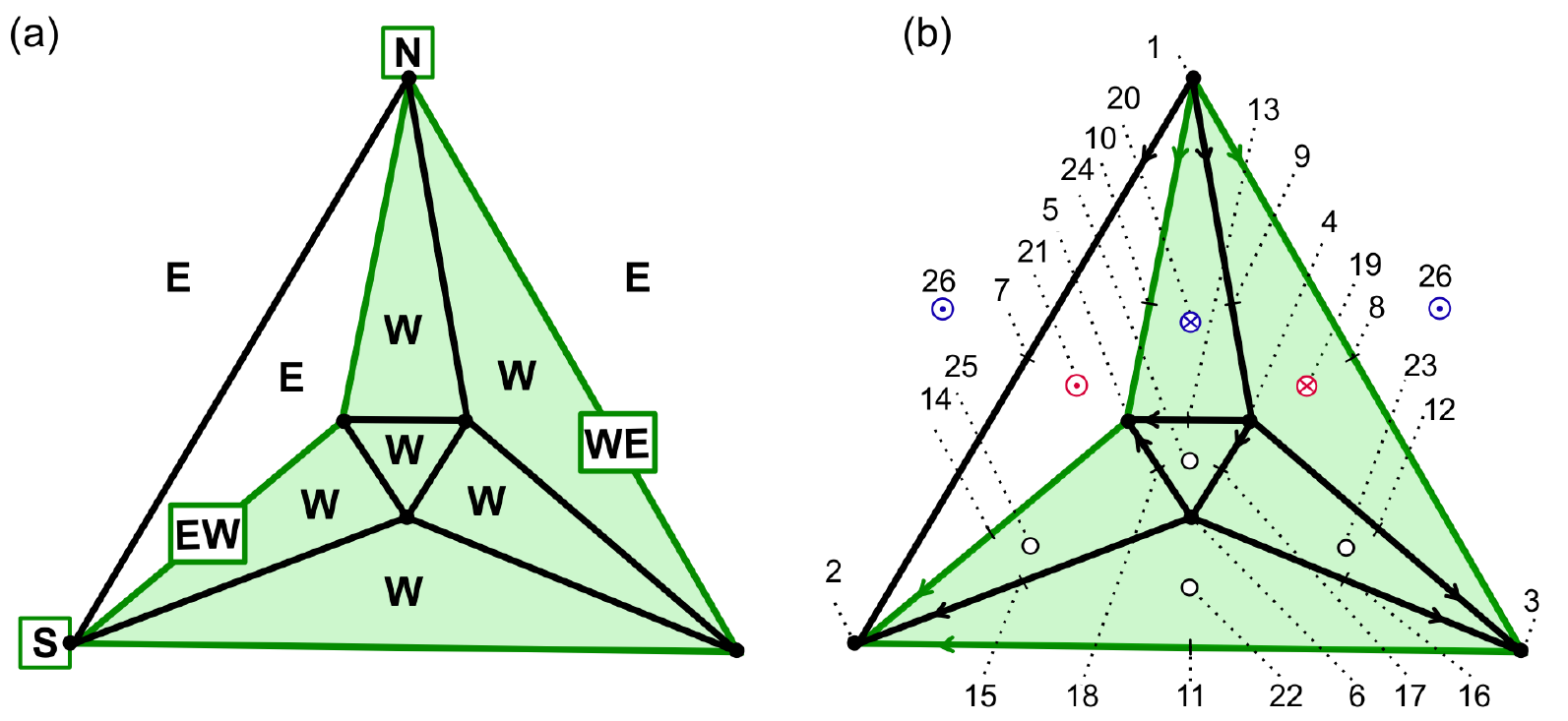}
\caption{\emph{
The solid three-dimensional Sturm octahedron $\mathbb{O}$.
(a) A meridian hemisphere decomposition $\mathbf{E}, \mathbf{W}$ into $2+6$ faces.
(b) The 3-cell template; vertex $i=0$ cells $\bullet = 1, \ldots , 6$; edge $i=1$ cells $7, \ldots , 18$; face $i= 2$ cells $\circ = 19, \ldots , 26$. Labels indicate barycenters.
The backwards face barycenter $26$ of triangle $1,2,3$ is drawn outside that triangle, in duplicate.
The $i=3$ cell barycenter $\mathcal{O} =27$ of the resulting octahedral 2-sphere $\partial c_{\mathcal{O}}$ is omitted.
The bipolar orientation of the 1-skeleton is implied by bipolarity and the boundary orientation of theorem~\ref{thm:4.1}(i),(iii).
The $h_\iota$-neighbors $w_\pm^\iota$ of $\mathcal{O} =27$ are $w_-^0 =19,\, w_-^1=20,\, w_+^0=21,$ and $w_+^1=26.$
}}
\label{fig:6.1}
\end{figure}

About a decade ago, for lack of scientific understanding, we launched a brute force computational attack on Sturm realizations of the solid octahedron.
The orientation poles $\lbrace \mathbf{N} , \mathbf{S} \rbrace$:= $\lbrace P_1, P_2 \rbrace$ must be either edge adjacent or antipodally opposite vertices.
We determined all Hamiltonian path candidates $h_\iota$, from pole to pole.
Adjacent poles provided 62552 paths $h_\iota$, and antipodal poles provided 70944 choices.
We then scanned for pairs $(h_0,h_1)$ which might lead to Sturm permutations $\sigma$:= $h_0^{-1} \circ h_1$.
Alas, not a single antipodal Sturm meander \eqref{eq:1.19b} emerged.
Thinking of the poles as the uppermost and lowermost equilibria of the Sturm attractor, in the strictly monotone order of $z=0$, we found this result rather counterintuitive: 
at first, we suspected a programming error.
On the other hand, a few consistent Sturm cases did appear, for edge adjacent poles $\mathbf{N}, \mathbf{S}$.

In proposition~\ref{prop:6.1}, based on our analysis in the present paper, we prove that solid Sturm octahedra necessarily possess edge adjacent poles $P_1,P_2$.
This excludes antipodal poles.
In the case of adjacent poles, proposition~\ref{prop:6.2} shows that the hemispheres $\lbrace \mathbf{E}, \mathbf{W}\rbrace = \lbrace H_1, H_2\rbrace$ either consist of $1+7$ or of $2+6$ faces.
We also derive the SZS-pair $(h_0,h_1)$ and the 3-meander template $\sigma = h_0^{-1} \circ h_1$, for the essentially unique $2+6$ case; see figs.~\ref{fig:6.3} and \ref{fig:6.4}.
For a full discussion, including the remaining type $1+7$ and many more examples, we refer to our sequel \cite{firo3d-3}.

\begin{figure}[t!]
\centering \includegraphics[width=\textwidth]{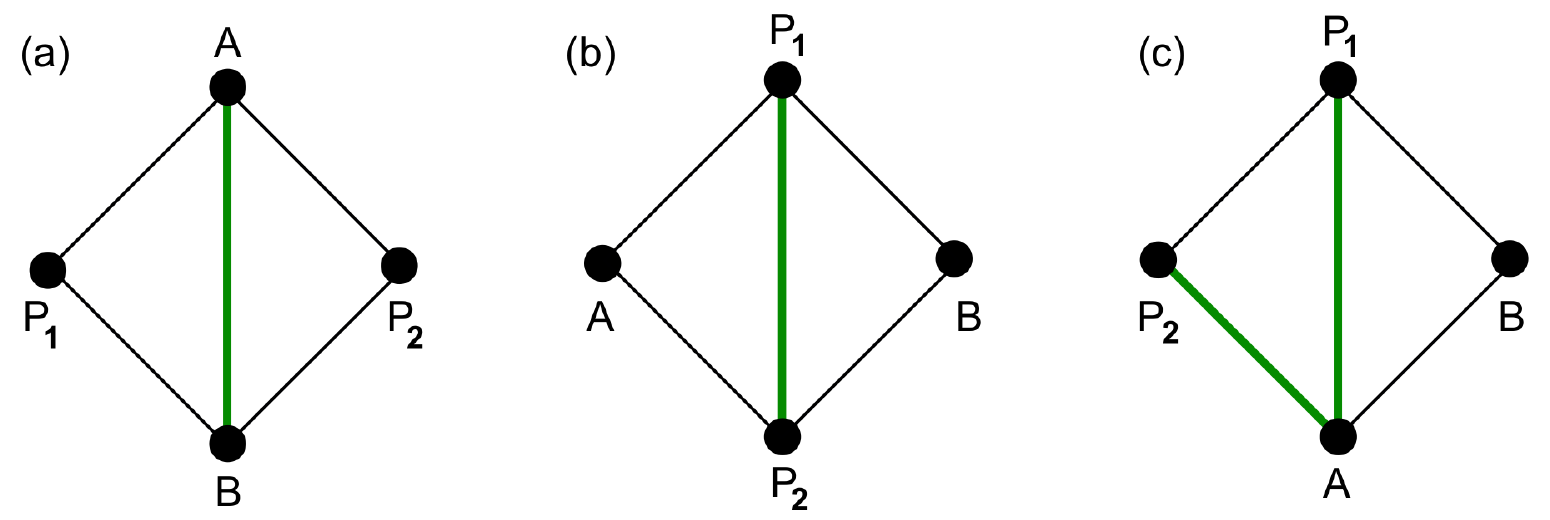}
\caption{\emph{
Meridian overlap of anti-polar triangle faces in opposite octahedral hemispheres:
(a) antipodal poles;
(b) and (c) edge adjacent poles.
Meridian edges are indicated in bold face green.
See also propositions~6.1 and 6.2.
}}
\label{fig:6.2}
\end{figure}

\begin{prop}\label{prop:6.1}
Let $\mathcal{A}$ be a Sturm global attractor with a dynamic regular Thom-Smale complex $\mathcal{C}$ which is a solid three-dimensional octahedron $\mathbb{O} = \mathrm{clos\,} c_\mathcal{O}$.

Then the poles $\mathbf{N}, \mathbf{S} = \Sigma_\pm^0(\mathcal{O})$ are edge-adjacent in $\mathcal{C}$.
\end{prop}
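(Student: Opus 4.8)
The plan is to argue by contradiction, using Theorem~\ref{thm:4.1} to convert the geometric hypothesis into the combinatorial conditions of a 3-cell template, and then to show that the octahedral 1-skeleton simply cannot carry the required bipolar orientation and meridian decomposition once the poles are antipodal. Suppose the poles $\mathbf{N},\mathbf{S}=\Sigma_\pm^0(\mathcal{O})$ are antipodal vertices of the octahedron $\mathbb{O}$. Since $\mathcal{C}$ is the dynamic Thom-Smale complex of a Sturm 3-ball attractor, Theorem~\ref{thm:4.1} tells us $\mathcal{C}$ is a 3-cell template: its 1-skeleton admits a bipolar orientation from $\mathbf{N}$ to $\mathbf{S}$, there are two disjoint directed meridian paths $\mathbf{EW},\mathbf{WE}$ splitting $S^2=\partial c_{\mathcal{O}}$ into hemispheres $\mathbf{W},\mathbf{E}$, and the overlap condition~(iv) holds on each meridian. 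I would exploit the very rigid combinatorics of the octahedron: each vertex has degree $4$, there are $6$ vertices, $12$ edges, and $8$ triangular faces, so I can count exactly.

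The key structural step is to understand the two meridians when $\mathbf{N},\mathbf{S}$ are antipodal. The four equatorial vertices $V_1,V_2,V_3,V_4$ (those adjacent to both poles, forming a $4$-cycle) are the only candidates for interior meridian vertices. A meridian is a directed path from $\mathbf{N}$ to $\mathbf{S}$; on the octahedron such a path either goes directly $\mathbf{N}\to V_i\to\mathbf{S}$ (length two) or wanders through several equatorial vertices before descending. The two meridians must be vertex-disjoint except at the poles, and together with the hemispheres they must tile $S^2$. I would next invoke the overlap condition~(iv): the $\mathbf{N}$-adjacent face on one side of a meridian must share at least one meridian \emph{edge} with the $\mathbf{S}$-adjacent face on the other side. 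On the octahedron, a face $\mathbf{N}$-adjacent to the first meridian edge is a triangle containing $\mathbf{N}$; an $\mathbf{S}$-adjacent face is a triangle containing $\mathbf{S}$; and with antipodal poles no triangle contains both. So the overlap can only be realized if a single meridian has length $\geq 3$, forcing both its first edge (at $\mathbf{N}$) and its last edge (at $\mathbf{S}$) to lie on the boundary of faces in the \emph{opposite} hemisphere that actually reach across. I expect to show, by exhausting the handful of combinatorial meridian shapes, that the edge-overlap of an $\mathbf{N}$-triangle with an $\mathbf{S}$-triangle along a common meridian edge is geometrically impossible when the poles are antipodal: the northern cap triangles only touch the upper half of any equatorial meridian, and the southern cap triangles only the lower half, leaving no shared \emph{edge}. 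This is precisely the content illustrated in fig.~\ref{fig:6.2}(a) versus (b),(c).

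The main obstacle, and the part that needs genuine care rather than routine checking, is the bipolar-orientation constraint interacting with condition~(iii): edges must point toward the meridians in $\mathbf{W}$ and away from them in $\mathbf{E}$, with bipolarity forbidding directed cycles and requiring $\mathbf{N}$ to be the unique orientation source and $\mathbf{S}$ the unique sink. With antipodal poles the equatorial $4$-cycle, under any acyclic orientation compatible with a single source $\mathbf{N}$ above and single sink $\mathbf{S}$ below, is too symmetric to split the eight faces into two hemispheres each satisfying~(iii) \emph{and} the overlap~(iv) simultaneously. Concretely, I would track how the forced edge orientations on the equatorial cycle determine which cap triangles become $\mathbf{NE},\mathbf{NW},\mathbf{SE},\mathbf{SW}$ in the sense of Corollary~\ref{cor:4.4}, and then verify that the required pair of anti-polar faces never shares a meridian edge. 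Since the octahedron is small, I anticipate closing the argument by a short finite case analysis over the distinct shapes the two disjoint meridians may take (both short, one short one long, both long), showing that in each case condition~(iv) fails. The contradiction then establishes that the poles cannot be antipodal, hence they must be edge-adjacent, as fig.~\ref{fig:6.2}(b),(c) displays and as the brute-force computation independently confirmed.
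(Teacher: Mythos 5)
Your setup is right, and your first deduction matches the paper's: by theorem~\ref{thm:4.1} the complex must be a 3-cell template, and since an $\mathbf{N}$-adjacent face is a triangle containing $\mathbf{N}$, an $\mathbf{S}$-adjacent face is a triangle containing $\mathbf{S}$, and no octahedral triangle contains both antipodal poles, any shared overlap edge required by condition~(iv) must be an equatorial edge disjoint from the poles. But from there your argument aims at the wrong target. You claim that the overlap of an $\mathbf{N}$-triangle with an $\mathbf{S}$-triangle along a common meridian edge is ``geometrically impossible'' with antipodal poles, and you propose to close the case analysis by ``showing that in each case condition~(iv) fails.'' That is false in the critical configuration: if the meridian is $\mathbf{N}\,V_1\,V_2\,\mathbf{S}$ through the equatorial edge $V_1V_2$, then the triangles $\mathbf{N}V_1V_2$ and $\mathbf{S}V_1V_2$ lie on opposite sides of that meridian, are exactly the $\mathbf{N}$- and $\mathbf{S}$-adjacent faces, and \emph{do} share the meridian edge $V_1V_2$. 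Figure~\ref{fig:6.2}(a) depicts precisely this successful overlap in the antipodal case; it is not an illustration of the overlap failing. So a case analysis whose goal is the failure of~(iv) gets stuck exactly where the real work is: both meridians can have length three, each carrying one of a pair of opposite equatorial edges, and~(iv) then holds on both.

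The missing idea is where the paper actually finds the contradiction. Once~(iv) forces each of the two disjoint meridians to contain one equatorial overlap edge ($AB$ and $A'B'$, opposite edges of the equatorial square), the remaining two equatorial edges $BA'$ and $AB'$ lie on neither meridian, hence in the interior of a hemisphere, yet each joins two non-polar vertices that both sit on meridians. Condition~(iii) requires every such hemisphere-interior edge to be oriented \emph{towards} the meridian at a non-polar meridian endpoint if it lies in $\mathbf{W}$, and \emph{away} at such an endpoint if it lies in $\mathbf{E}$; an edge with two non-polar meridian endpoints cannot satisfy this at both ends, whichever hemisphere it lies in and however it is oriented. You do gesture at the interaction with~(iii) and the bipolar orientation as ``the main obstacle,'' but you never isolate these two cross-equatorial edges, and your concrete plan still concludes with ``the required pair of anti-polar faces never shares a meridian edge,'' which is the statement that fails. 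As written, the proposal would not close the antipodal case.
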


\begin{proof}[\textbf{Proof.}]
By theorem~\ref{thm:4.1}, the octahedral dynamic Thom-Smale complex $\mathcal{C}$ must be a 3-cell template.
Let $P_1, P_2$ denote the poles $\mathbf{N}, \mathbf{S} = \Sigma_\pm^0(\mathcal{O})$, ignoring the details of the bipolar orientation.
Suppose indirectly that the poles are antipodal, rather than edge-adjacent, in the octahedron $\mathcal{C}$.
By meridian edge overlap, theorem~\ref{thm:4.1}(iv), anti-polar faces with boundary in the same meridian share an edge along that meridian.
Because both overlapping faces are triangles, and because the poles are antipodal, this leads to the situation of fig.~\ref{fig:6.2}(a): the meridian overlap edge $AB$ is disjoint from the poles $P_1,P_2$.

By theorem~\ref{thm:4.1}(ii),(iv), the other meridian must also provide an overlap edge $A'B'$, disjoint from $AB$ and from the poles.
Consider the square $ABA'B'$ of nonpolar vertices and edges, in $\mathcal{C} = \mathrm{clos\,}c_\mathcal{O}$, which separates the poles in $\partial c_\mathcal{O}$.
Because the two meridians are disjoint, by theorem~\ref{thm:4.1}(ii), the remaining two edges $BA'$ and $B'A$ must belong to hemisphere interiors.
Consider the hemisphere $H_1 \in \lbrace \mathbf{W} , \mathbf{E} \rbrace$ of $BA'$.
In $H_1$, the edge $BA'$ connects the non-polar meridian boundary points $B$ and $A'$.
Whatever the bipolar orientation may be, on $BA'$, and whichever hemisphere $H_1$ we may consider, this contradicts the requirement of theorem~\ref{thm:4.1}(iii) that non-polar edges either be all oriented towards the meridians, or else all away from them, in any one hemisphere.
This contradiction proves the proposition.
\end{proof}

\begin{prop}\label{prop:6.2}
Consider octahedral Sturm global attractors $\mathcal{A}, \mathcal{C}$ as in proposition~\ref{prop:6.1}.
Then each meridian possesses at most two edges.
In particular, one of the hemispheres possesses at most two faces.
\end{prop}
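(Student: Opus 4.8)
The plan is to argue entirely on the fixed octahedral surface $\partial c_\mathcal{O}$, combining two ingredients: the rigid combinatorics of the octahedron graph (six vertices, twelve edges, eight \emph{triangular} faces) and the overlap condition of Theorem~\ref{thm:4.1}(iv). By Proposition~\ref{prop:6.1} the poles are edge-adjacent, so I fix a labelling of the six vertices in which $\mathbf{N}$ and $\mathbf{S}$ are non-antipodal. The decisive structural fact, used twice below, is that in the octahedron $\mathbf{N}$ and $\mathbf{S}$ have exactly two common neighbours, and these form the single antipodal pair $\{c,c'\}$, which is therefore \emph{not} joined by an edge. Equivalently: no edge of $\mathcal{C}$ has its two incident triangular faces carrying the apexes $\mathbf{N}$ and $\mathbf{S}$, respectively.

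First I would bound the length of a meridian, say $\mathbf{WE}$. Since every face is a triangle, at most two of its three boundary edges can lie on the simple path $\mathbf{WE}$, and if two do they are consecutive. Hence the face $\mathbf{NE}$ (which contains the first edge of $\mathbf{WE}$) reaches at most the second meridian edge counted from $\mathbf{N}$, while $\mathbf{SW}$ (which contains the last edge) reaches at most the second counted from $\mathbf{S}$. Theorem~\ref{thm:4.1}(iv) demands that $\mathbf{NE}$ and $\mathbf{SW}$ share a meridian edge; comparing reaches forces $\mathbf{WE}$ to have at most three edges. If it had exactly three, the shared edge would be the middle edge $e$, whose $\mathbf{W}$-face $\mathbf{NE}$ and $\mathbf{E}$-face $\mathbf{SW}$ would be precisely the two triangles with apexes $\mathbf{N}$ and $\mathbf{S}$; then the two endpoints of $e$ would be common neighbours of $\mathbf{N}$ and $\mathbf{S}$, hence equal to $\{c,c'\}$, contradicting that $\{c,c'\}$ is not an edge. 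Thus $\mathbf{WE}$, and by the identical argument $\mathbf{EW}$ (via $\mathbf{NW},\mathbf{SE}$), has at most two edges.

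For the hemisphere count, write $p,q$ for the two meridian lengths, so the meridian circle has $L:=p+q$ vertices with $1\le p,q\le 2$. Two single-edge meridians are impossible, as $\mathbf{N},\mathbf{S}$ are joined by a single edge; hence $L\in\{3,4\}$ and exactly $6-L\in\{2,3\}$ vertices lie off the meridian. Each meridian-interior vertex is a common neighbour of the poles, hence lies in $\{c,c'\}$, and a short case check identifies the off-meridian vertices as $\{$the two pole-antipodes$\}$ when $L=4$, and as the three antipodes of the meridian vertices when $L=3$; in both cases they are pairwise adjacent. Because the meridian circle is a Jordan curve by Theorem~\ref{thm:4.1}(ii) and meets edges only at its own vertices, any edge between two off-meridian vertices lies in a single open hemisphere, so a clique of off-meridian vertices sits entirely in one hemisphere. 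Consequently the other hemisphere has no interior vertex and is a triangulated $L$-gon, which by Euler's formula carries exactly $L-2\le 2$ faces.

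The bookkeeping (face/edge counts, the $\mathbf{NE}/\mathbf{SW}$ reach comparison) is routine; the two places where the argument genuinely bites, and where I expect the main obstacle, are the \emph{same} octahedral input invoked twice: that the only common neighbours of the edge-adjacent poles form a non-edge antipodal pair. This single fact both kills meridians of length three and forces the off-meridian vertices to be a clique rather than to contain a separating antipodal pair. The latter is exactly what rules out a balanced $1+1$ (for $L=4$) or $1+2$ (for $L=3$) split of the interior vertices, and thereby pins one hemisphere down to at most two faces.
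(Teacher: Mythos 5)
Your proof is correct. For the meridian bound it follows the same strategy as the paper — the overlap condition of theorem~\ref{thm:4.1}(iv) applied to the two anti-polar \emph{triangular} faces, combined with edge-adjacency of the poles from proposition~\ref{prop:6.1} — but you make explicit the octahedral fact that the paper leaves to its figure~\ref{fig:6.2}(b),(c) case inspection: the common neighbours of edge-adjacent poles form the unique antipodal (hence non-edge) pair, so no edge can carry $\mathbf{N}$- and $\mathbf{S}$-apexed triangles on its two sides; this is exactly what kills the three-edge meridian, and your "reach" comparison of $\mathbf{NE}$ versus $\mathbf{SW}$ is a clean substitute for the paper's assertion that the overlapping triangles already exhaust the meridian pole-to-pole. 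For the hemisphere count your route genuinely diverges: the paper bounds the \emph{smaller} hemisphere by comparing the circumference of three or four contiguous triangles ($5$, resp.\ $\geq 4$ with a forced common vertex) against the total meridian budget of at most four edges, whereas you identify the $6-L$ off-meridian vertices explicitly (the pole antipodes, plus possibly the antipode of the interior meridian vertex), observe that they form a clique which a Jordan-curve argument confines to a single open hemisphere, and then Euler-count the opposite hemisphere as a triangulated $L$-gon with $L-2\leq 2$ faces. Your version buys a sharper conclusion ($L-2$ faces exactly, recovering the $2+6$ and $1+7$ splits directly) and avoids any appeal to pictures, at the cost of a slightly longer vertex-by-vertex identification; the paper's circumference count is shorter but leans on the figures and on the reader checking the "four triangles sharing a vertex" configuration by hand.
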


\begin{proof}[\textbf{Proof.}]
By proposition~\ref{prop:6.1} the poles $P_1,P_2 =\Sigma_\pm^0$ are edge-adjacent.
Consider two anti-polar triangles with meridian edge overlap, again, given by theorem~\ref{thm:4.1}(iv) as in the proof of proposition~\ref{prop:6.1} or in fig.~\ref{fig:6.2}.
Since the poles $P_1,P_2$ are edge-adjacent, this time, we arrive at one of the situations of fig.~\ref{fig:6.2}(b),(c), for each meridian.
Here we have used that the two triangles must be polar to opposite poles along opposite sides of the same meridian.
In particular this excludes a situation where $P_2 P_1$ is a meridian edge in fig.~\ref{fig:6.2}(c), but $P_2A$ is not.

Since each meridian in fig.~\ref{fig:6.2}(b),(c) happens to be complete, pole-to-pole, this proves that each of the two meridians can possess at most two edges.
The meridians decompose $S^2= \partial c_\mathcal{O}$ into two hemispheres.
The smaller hemisphere consists of at most four triangle faces.
We discuss the arising cases separately.

First suppose the smaller hemisphere consists of three triangles.
The circumference for three edge contingent triangles in $\partial c_\mathcal{O}$ is 5.
This exceeds the maximal budget of 4 edges, two for each full meridian of type fig.~\ref{fig:6.2}(c).

Next suppose the smaller hemisphere consists of four triangles.
However, the minimum circumference 4 of four contingent triangles, all sharing one common vertex, cannot be realized either. 
Rather, the two required copies of fig.~\ref{fig:6.2}(c) then produce two meridians which encompass the edge $P_1 P_2$. That results in only two triangles for the encompassed smaller hemisphere.
Combining fig.~\ref{fig:6.2}(b) and (c) leads to a single face hemisphere, of course.
This proves the proposition.
\end{proof}

\begin{figure}[t!]
\centering \includegraphics[width=\textwidth]{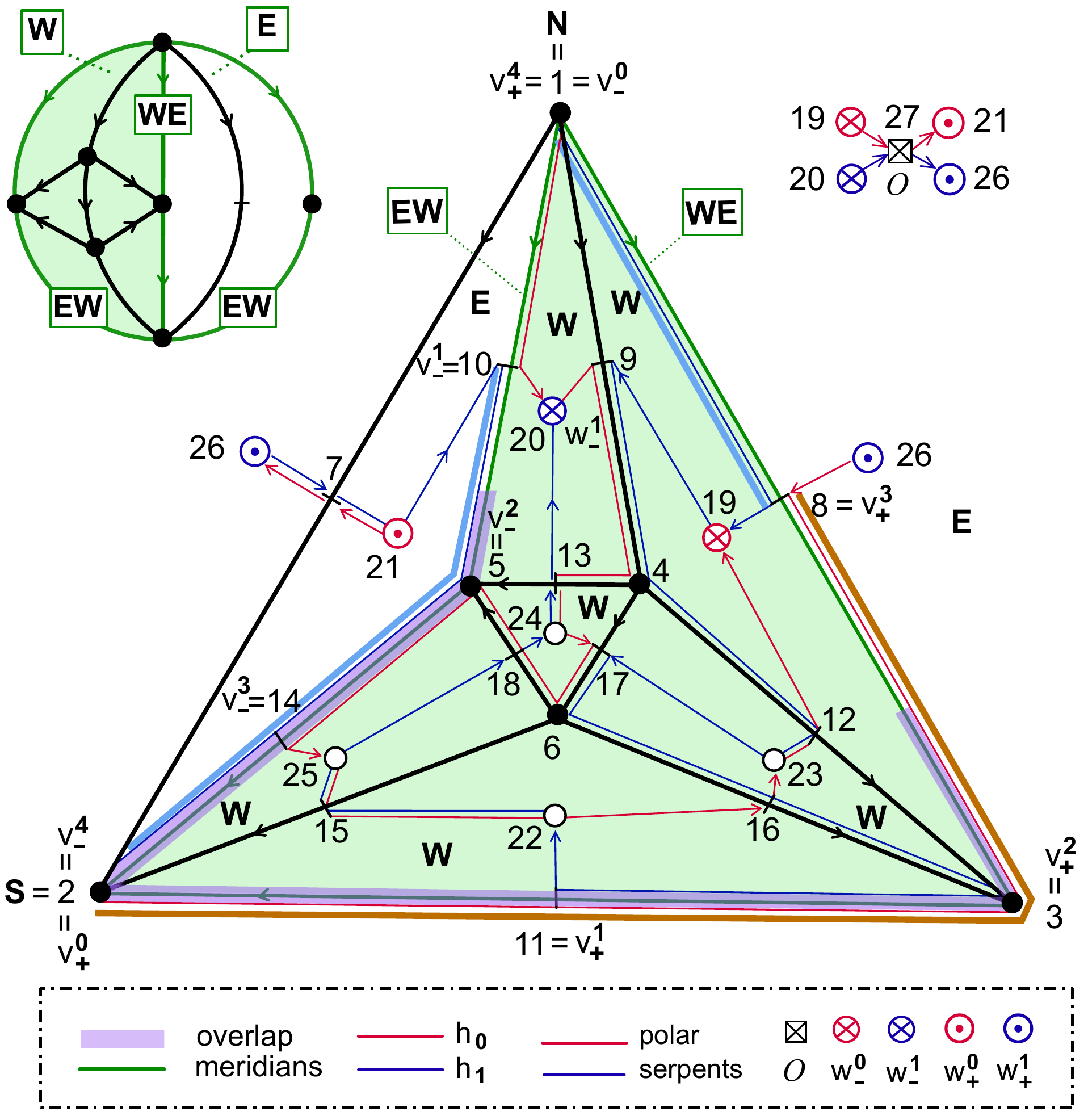}
\caption{\emph{
Construction of the SZS-pair $(h_0,h_1)$, for the bipolar octahedral 3-cell template of fig.~\ref{fig:6.1}(b).
See definition~\ref{def:5.1} and detailed text comments.
The resulting paths $h_\iota$ with respect to equilibrium labels in the figure are listed in \eqref{eq:6.1}.
See also \eqref{eq:6.2} for the resulting Sturm permutation $\sigma = h_0^{-1} \circ h_1$, and fig.~\ref{fig:6.4} for the resulting Sturm meander.
}}
\label{fig:6.3}
\end{figure}

We conclude this section with a construction of the unique solid octahedron with a 2-face hemisphere.
Uniqueness is understood up to trivial equivalences \eqref{eq:2.4}--\eqref{eq:2.9}.
See fig.~\ref{fig:6.1}(b) and fig.~\ref{fig:6.3}.

By proposition~\ref{prop:6.1} the poles $\mathbf{N}, \mathbf{S}$ are edge-adjacent.
Without loss of generality $\mathbf{N}=1$, $\mathbf{S}=2$ in fig.~\ref{fig:6.1}(b).
Because we require the smaller hemisphere to consist of two faces, the proof of proposition~\ref{prop:6.2} and fig.~\ref{fig:6.2}(c) imply that the meridians are given by the edges $8,11$ and $10,14$, respectively.
The trivial equivalence $x \mapsto 1-x$ of \eqref{eq:2.8} interchanges the meridians.
Without loss of generality, therefore, let $\mathbf{WE} =\lbrace 8,3,11 \rbrace$ and $\mathbf{EW} =\lbrace 10,5,14 \rbrace$.
This determines the hemisphere decomposition and the polar-meridian faces $w_\pm^\iota$ of fig.~\ref{fig:6.1}(b).

It remains to determine the bipolar orientation of the edges $7, \ldots , 18$.
The poles $\mathbf{N} =1$ and $\mathbf{S} =2$ determine the orientations of the polar edges $7,8,9,10$ and $11,14,15$.
By the meridian boundary edge orientation of theorem~\ref{thm:4.1}(iii), in the six-face hemisphere $\mathbf{W}$, the edges $12,16$ and $13,18$ are oriented towards the meridian vertices 3 and 5, respectively.
The remaining edge 17 must be oriented from 4 to 6.
Otherwise 6 becomes a second orientation source, besides $\mathbf{N}$, in contradiction to bipolarity, theorem~\ref{thm:4.1}(i).
This determines the orientation of all edges, as in fig.~\ref{fig:6.1}(b).
Reverting all orientations by the trivial equivalence $u \mapsto -u$ of \eqref{eq:2.4}, incidentally, reproduces the same Sturm attractor $\mathcal{A}$ and octahedral complex $\mathcal{C}$, with interchanged hemisphere labels.

\begin{figure}[t!]
\centering \includegraphics[width=\textwidth]{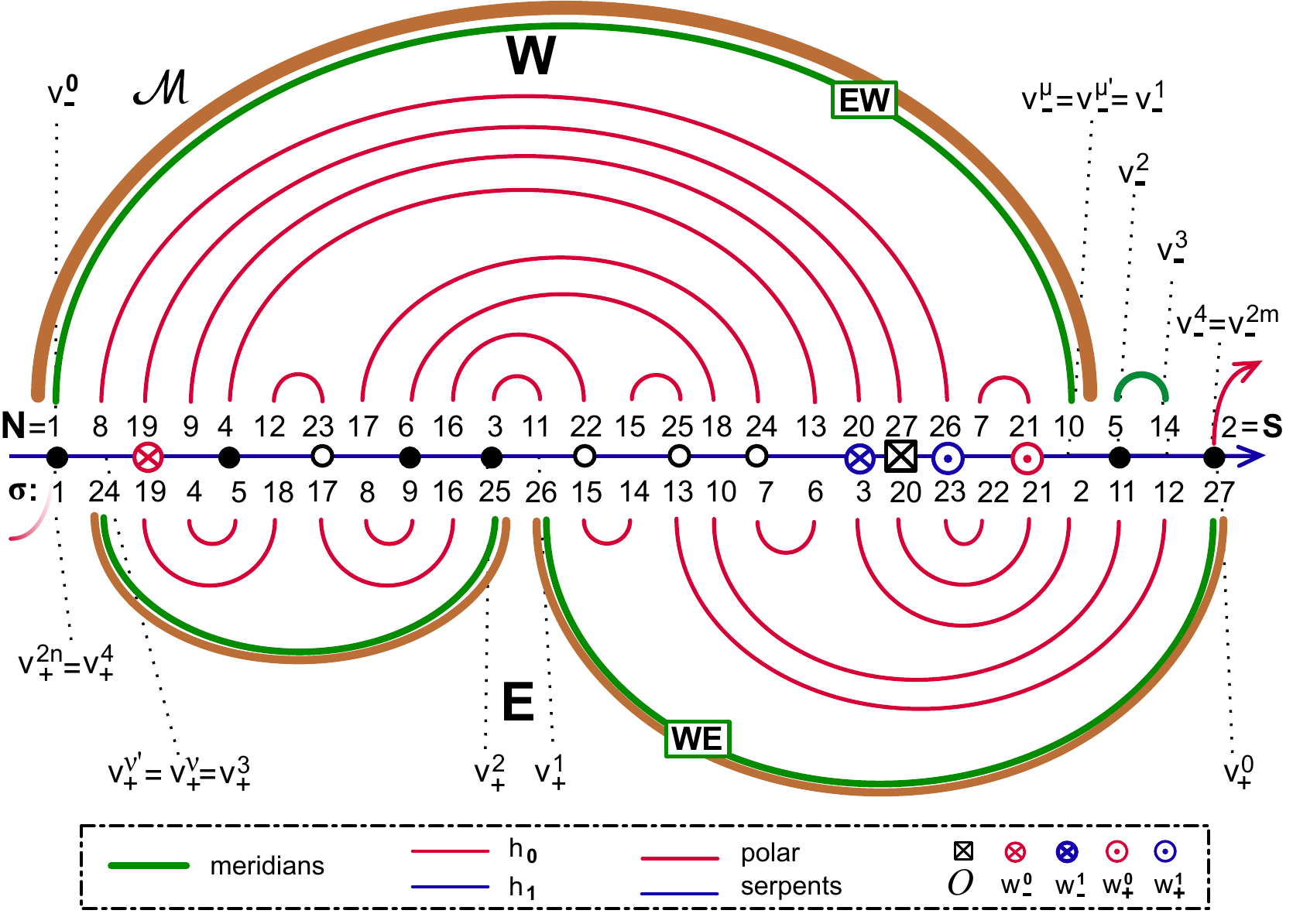}
\caption{\emph{
The Sturm 3-meander template $\mathcal{M}$ of the $6+2$ octahedron complex $\mathcal{C}$.
Equilibrium labels above the horizontal $h_1$-axis, and Sturm permutation $\sigma = H_0^{-1} \circ h_1$ below.
Note how the shooting curve $h_0$ and the horizontal axis $h_1$ follow the equilibrium labels according to their enumerations \eqref{eq:6.1}.
Polar $h_0$-serpents (orange) and anti-polar $h_1$-serpents (blue) overlap at  $v_-^{\mu '} = v_-^{\mu} = v_-^1 =10,\, v_+^{\nu '} = v_+^\nu = v_+^3 =8$.
The $h_1$-extreme faces $w_\pm^0$ are $h_0$-neighbors of $\mathcal{O}$.
The first polar arcs of the $h_0$-polar serpents overarch $\mathcal{O}$.
Note consistency of all Morse numbers $i_v$ with Morse indices $i(v)$, for all equilibria $v$, according to \eqref{eq:1.22a} and fig.~\ref{fig:6.3}.
}}
\label{fig:6.4}
\end{figure}

\begin{figure}[t!]
\centering \includegraphics[width=\textwidth]{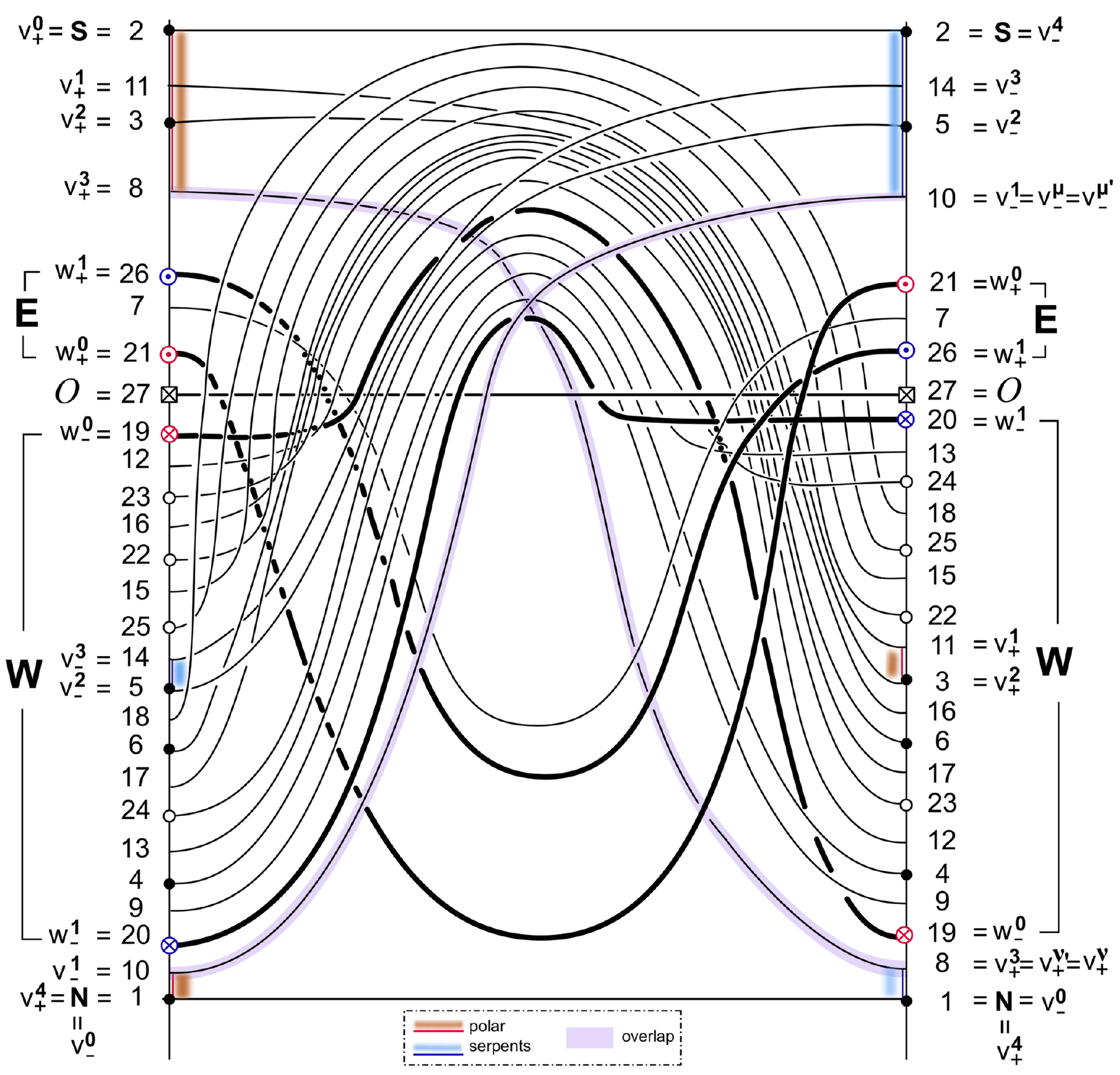}
\caption{\emph{
A sketch of the spatial profiles $v(x)$, for all equilibria in the solid Sturm octahedron of figs.~\ref{fig:6.1}(b) and \ref{fig:6.3}.
Also see the legend of fig.~\ref{fig:1.1} and the general 3-ball template of fig.~\ref{fig:3.1}.
The equilibria $1, \ldots , 27$ are ordered by $h_0$, $h_1$ along the left, right vertical axis $x=0,\, 1$, respectively.
}}
\label{fig:6.5}
\end{figure}

As in section~\ref{sec5} we can now construct a unique SZS-pair of Hamiltonian paths $(h_0,h_1)$ for the meridian decomposition and the bipolar 3-cell template $\mathcal{C}$; see fig.~\ref{fig:6.3}.
We first construct the path $h_0$, as it traverses each triangle face $19,20, 22, \ldots , 25$ in $\mathbf{W}$ from the lower left boundary edge to the upper right boundary edge.
In $\mathbf{E}$ we reverse the roles of ``left'' and ``right''.
Since the back triangle $26 \in \mathbf{E}$ is depicted in the wrong planar orientation, $h_0$ traverses this face from lower left 7 to upper right 8.
At $w_-^0 = 19 \in \mathbf{W}$, of course, the path $h_0$ leaves $\mathbf{N}\cup\mathbf{EW}\cup\mathbf{W}$ and tunnels through the barycenter $\mathcal{O}=27$ to re-emerge at $w_+^0 = 21\in \mathbf{E}$.
The rules for the Hamiltonian path $h_1$ are analogous, by reflection; see definition~\ref{def:5.1}.
The paths $h_\iota$ from $\mathbf{N}=1$ to $\mathbf{S}=2$ are 
	\begin{equation}
	\begin{aligned}
	h_0: \,\, 1\;\; &\text{10 20 9 4 13 24 17 6 18 5 14 25 15
	 22 16 23 12 19 27 21 7 26 8 3 11 2}\,;\\
	h_1: \,\, 1\;\; &\text{8  19 9  4  12 23 17 6  16 3  11 22 15
	25 18 24 13 20 27 26 7  21 10 5  14 2}\,.
	\end{aligned}
	\label{eq:6.1}
	\end{equation}
The resulting meander $\mathcal{M}$ of $\sigma = h_0^{-1} \circ h_1$ is depicted in fig.~\ref{fig:6.4}:
	\begin{equation}
	\begin{aligned}
	\sigma 
	&=  \lbrace 1,\text{24, 19, 4, 5, 18, 17, 8, 9, 16, 25, 26, 15, 14,}\\
	&\phantom{=\lbrace 1,} \text{ 13, 10, 7, 6, 3, 20, 23, 22, 21, 2, 11, 12, 27}\rbrace =\\
	&= \text{(2 24) (3 19) (6 18) (7 17) (10 16) (11 25) (12 26) (13 15) (21 23)}\,.
	\end{aligned}
	\label{eq:6.2}
	\end{equation}
Although $\sigma$ is an involution, $\sigma$ cannot be realized as the Sturm permutation of any nonlinearity $f=f(u)$ which only depends on $u$; see \cite{firowo12}.
Indeed the permutation cycles $\text{(11 25)}$ and $\text{(12 26)}$, for example, are not nested.

By theorem~\ref{thm:5.2} the meander $\mathcal{M}$ is Sturm and a 3-meander template.
We use analogous notation to facilitate the comparison of the octahedral Sturm meander $\mathcal{M}$ in fig.~\ref{fig:6.4} with the general 3-meander template of fig.~\ref{fig:1.3}.
A direct inspection of the Sturm meander \eqref{eq:6.2} with the methods of \cite{firo96} confirms that all zero numbers and the connection graph of $\sigma$ coincide with the connection graph prescribed by the octahedral 3-cell template of fig.~\ref{fig:6.1}(b).
By \eqref{eq:1.22a} it is easy to explicitly check, at least, how the Morse numbers $i_v$ coincide with the prescribed Morse indices $i(v)$ for all equilibrium labels $v=1, \ldots , 27$.
For a ``spaghetti'' sketch of spatial equilibrium profiles $v = v(x)$ see fig.~\ref{fig:6.5}.

It remains to show how the dynamic Thom-Smale complex $\mathcal{C}_\sigma$ of the Sturm global attractor $\mathcal{A}_\sigma$ of $\sigma$ coincides with the prescribed 3-cell template $\mathcal{C}$, not just for our octahedral example but, in complete generality.
For this last design step in our study of Sturm 3-ball attractors we must refer to the sequel \cite{firo3d-2}.

{\small

}

\end{document}